\newtheorem{theorem}{Theorem}[section]
\newtheorem*{theorem*}{Theorem}
\newtheorem{lemma}{Lemma}[section]
\newtheorem{condition}{Condition}[section]
\newtheorem{proposition}{Proposition}[section]
\theoremstyle{definition}
\newtheorem{definition}{Definition}[section]
\theoremstyle{remark}
\newtheorem{remark}{Remark}[section]
\numberwithin{equation}{section}
\def\R{\mathbb R}
\def\N{\mathbb N}
\def\eps{\varepsilon}
\def\supp{\textrm{supp}}
\begin{document}

\title[Isometric Extension]
{Rigidity and Flexibility of Isometric Extensions}

\author[W. Cao \and D. Inauen]{Wentao Cao \and Dominik Inauen}

\address{Wentao Cao, Academy for Multidisciplinary Studies, Capital Normal University, West 3rd Ring North Road 105, Beijing, 100048 P.R. China. E-mail:{\tt cwtmath@cnu.edu.cn}}
\address{Dominik Inauen, Institut f\"{u}r Mathematik, Universit\"{a}t Leipzig, D-04109, Leipzig, Germany.  E-mail:{\tt dominik.inauen@math.uni-leipzig.de}}

\begin{abstract}
 In this paper we consider the rigidity and flexibility of $C^{1, \theta}$  isometric extensions. We show that the H\"older exponent $\theta_0=\frac12$ is critical in the following sense: if $u\in C^{1,\theta}$ is an isometric extension of a smooth isometric embedding of a codimension one submanifold $\Sigma$ and $\theta> \frac12$, then the tangential connection agrees with the Levi-Civita connection along $\Sigma$. On the other hand, for any
 $\theta<\frac12$  we can construct  $C^{1,\theta}$ isometric extensions via convex integration
 which violate such property. As a byproduct we get moreover an existence theorem for  $C^{1, \theta}$ isometric embeddings, $\theta<\frac12$, of compact Riemannian manifolds with $C^1$ metrics and sharper amount of codimension.
\end{abstract}
\subjclass[2010]{ 53B20, 53A07, 57R40}

\date{\today}

\maketitle
\section{Introduction}
\label{intro}
 Let $(\mathcal M,g)$ be an $n$-dimensional compact smooth Riemannian manifold and $m>n$. Recall that an isometric embedding of $(\mathcal M,g)$ into $(\R^{m},e)$ is an injective $C^{1}$ map $u:\mathcal M\hookrightarrow  \R^{m}$ such that $u^{\sharp}e =g$. Here, $e$ is the Euclidean metric and $u^{\sharp}e$ denotes the pullback metric on $\mathcal M$.  In local coordinates this amounts to the system of  partial differential equations
 \begin{equation}\label{e:isometric}
 \sum_{k=1}^{m} \frac{\partial u^{k} }{\partial x^{i}}\frac{\partial u^{k}}{\partial x^{j}} = g_{ij}
 \end{equation}
 for $1\leq i,j\leq n$, where  $g= g_{ij} dx^{i}dx^{j}$  using summation over repeated indices. 
 
Classical results in differential geometry indicate that \emph{sufficiently regular} global isometric embeddings into Euclidean space with low co-dimension (i.e., $m-n$ is small) are often rigid (i.e., \emph{unique} upto rigid motions). Most prominent is the rigidity of the Weyl problem: given a metric $g$ on the sphere $\mathbb{S}^{2}$ with positive Gaussian curvature, isometric embeddings $u:(\mathbb{S}^{2},g) \hookrightarrow \R^{3}$ are rigid  in the class $C^{2}$ (cf. \cite{CohnVossen,Herglotz}). On the other hand, the celebrated Nash--Kuiper theorem (cf. \cite{Nash54, Kui55}) implies that such spheres can be isometrically embedded into arbitrarily small balls of $\R^{3}$ if one only requires the embedding to be $C^{1}$. A natural question is whether there exists a threshold regularity which distinguishes these two drastically different behaviours.

As shown in \cite{Borisov58,BorisovRigidity1, PogorelovRigidity,Bor04} (see also \cite{CDS12} for a short, modern proof), isometric embeddings $u\in C^{1,\theta}$ of positively curved closed surfaces into $\R^{3}$ are rigid for $\theta>\frac23$. On the other hand, the flexibility of isometric embeddings granted by the Nash--Kuiper theorem also holds for isometric embeddings $u:(\mathcal M,g)\hookrightarrow \R^{n+1}$  of compact $n$-dimensional manifolds  whenever $ u\in C^{1,\theta}$ with $\theta< \frac{1}{1+n+n^{2}}$ for $n\geq 3$ (cf. \cite{Bor65,CDS12,CS20}), and with $\theta <\frac15$ for $n=2$ (cf. \cite{DIS,CS20}). The threshold exponent has been conjectured to be $\theta =\frac12$ (see \cite{Gromov:2015tua}).

The situation looks different when the co-dimension of the ambient space is sufficiently large. A result  by A. K\"all\'en (cf. \cite{K78}) shows that, in this case, flexibility of isometric embeddings extends to the regularity $C^{1,\theta}$ for any $\theta<1$, and thus there is no rigidity below $C^{2}$.

On the other hand, in \cite{DIS20} the authors find that a weaker form of rigidity is present above the conjectured threshold regularity $C^{1,\frac12}$ no matter the codimension: they show that when $u\in C^{1,\theta}( \mathcal M,\R^{m})$ is an isometric embedding with $\theta>\frac12$, a weak notion of tangential connection can be defined on the (irregular) embedded submanifold (see also \cite{Borisov58} for a similar weak notion of tangential connection) and that it agrees with the Levi-Civita connection. In the case of isometric embeddings $u\in C^{1,\theta}((\bar D_1,g),\R^{m})$ of the closed unit disc taking fixed (smooth) boundary values it is then shown that this compatibility of the weak tangential connection with the intrinsic metric leads to an angle constraint of the tangent space at points of the embedded boundary curve. In contrast, for every $\theta<\frac{1}{2}$ the authors construct isometric embeddings $u\in C^{1,\theta}$ violating this constraint by extending the (smooth) boundary datum to a $C^{1,\theta}$ isometric embedding of the disc by means of a convex integration process. Thus, for this particular example, the result in \cite{DIS20} gives a geometric illustration of the criticality of the H\"older exponent $\theta =\frac12$, at least in the presence of a "boundary condition".

In this paper we study the rigidity and flexibility properties of general isometric extensions. A first observation shows that the angle constraint of \cite{DIS20} is simply a consequence of the compatibility of the weak tangential connection with the intrinsic metric and of the embedding agreeing with a smooth one along a lowerdimensional submanifold. It is therefore also present for general isometric extensions of $C^{2}$ isometries which are $C^{1,\theta}$ for $\theta>\frac{1}{2}$.
On the flexibility side, we want to construct isometric extensions $u\in C^{1,\theta}$, $\theta<\frac{1}{2}$, violating this constraint. 

The problem of extending an isometric map $f:\Sigma\to\R^{m}$, where $\Sigma\subset \mathcal M$ is a co-dimension one submanifold, was first considered by Jacobowitz in \cite{Jac74} in the high-regularity  and high co-dimension  setting. He gave a necessary condition on the second fundamental forms of $\iota :\Sigma \hookrightarrow \mathcal M$ and $f:\Sigma\to \R^{m}$ for the existence of a $C^{2}$ extension $u:\mathcal M\hookrightarrow \R^{m}$. He also found a sufficient condition  (which turns out to be almost necessary) for such an extension, which can be stated as that the image $f(\Sigma)$ shall be   ``more curved" than $\Sigma$. Besides, as discussed in \cite{Jac74,CS19}, isometric extension can also be viewed as a Cauchy problem for   isometric embeddings and certain non-degeneracy conditions on the curvature are important to prove the existence of a sufficiently smooth solution (for local extensions from a point resp. a curve on  2-dimensional manifolds, see \cite{Han05,Lin}  resp. \cite{Dong, Kh, Han06, Cao}). The existence of isometric $C^{1}$ extensions in low co-dimension was then investigated in \cite{HunWas16}. The authors showed that Jacobowitz' obstruction for $C^{2}$ extensions is also an obstruction for $C^{1}$ extensions and found a sufficient condition (similar to the one in \cite{Jac74}) for \emph{one-sided} extensions (see \cite{HunWas16} or below for a similar definition). Under such a condition they proved an existence theorem for isometric $C^{1}$ extensions analogous to the Nash--Kuiper theorem. This was then improved to the $C^{1,\theta}$ category for $\theta<\frac{1}{1+n+n^{2}}$ in \cite{CS19}, although the one-sided extensions are only defined locally around a point.

In this paper we show that, under the same sufficient condition, we can find one-sided isometric extensions (defined on a full one-sided neighborhood of the submanifold $\Sigma$) with regularity $C^{1,\theta}$ for $\theta<\frac{1}{2}$, for which the tangential connection does not agree with the Levi-Civita connection along the submanifold $\Sigma$.

To precisely state our results we introduce our setting. We consider a smooth, compact, orientable $n$-dimensional manifold $\mathcal M$ equipped with a $C^{1}$ Riemannian metric $g$ and an orientable submanifold $\Sigma \subset \mathcal M$ of co-dimension one. Suppose that $f:\Sigma \to \R^{m}$ is a smooth isometric embedding for some $m>n$  and denote by $L: T \Sigma\times T\Sigma  \to C^{\infty}(\Sigma)$ the (scalar) second fundamental form of the embedding $\iota: \Sigma \hookrightarrow \mathcal M$, and  by $\bar L:T \Sigma \times T\Sigma \to f^{*}N  f(\Sigma) $ the second fundamental form of the embedding $f$.
In \cite{HunWas16} Hungerb\"uhler--Wasem showed that a sufficient condition for the existence of a $C^1$ one-sided isometric extension (cf. \cite{HunWas16} for the definition) of $f:\Sigma\to \R^{m}$ is that there exists a smooth vectorfield $\mu:\Sigma \to \R^{m}$ satisfying for every $p\in \Sigma$
\begin{equation}\label{e:HW}
\begin{split}
\textrm{(i)}&\quad \mu(p)\in N_{f(p)}f(\Sigma),\\
\textrm{(ii)}&\quad |\mu(p)|=1,\\
\textrm{(iii)}&\quad \langle\mu(p), \bar{L}(\cdot, \cdot)\rangle -L(\cdot,\cdot)\textrm{ is positive definite on }T_p\Sigma.
\end{split}
\end{equation}
Here, $\langle \cdot,\cdot\rangle$ denotes the Euclidean scalar product.
Under this assumption we will be able to extend the isometric embedding $f$ to some neighborhood of $\Sigma$ which is best described by the exponential map.  Let $\nu \in\Gamma(N\Sigma)$  be the unique unit normal vectorfield respecting the orientation of $\Sigma$ in $\mathcal M$. Since $\Sigma$ is compact there exists $\epsilon_0>0$ such that $F:\Sigma\times ]-\epsilon_0,\epsilon_0[\to \mathcal M$ given by
\begin{equation}\label{d:F} F(p,t) = \exp_p(t\nu(p))
\end{equation}
is a diffeomorphism. For $\epsilon \leq \epsilon_0$ we then call $\Sigma_\epsilon^{+} := F(\Sigma\times  [0,\epsilon[)$ a \emph{one-sided neighborhood} of $\Sigma$ in $\mathcal M$.  Lastly, we let
$$\mathcal I^{\theta}_m(\Sigma_\epsilon^{+})=\{v\in C^{1, \theta}|v:(\Sigma_{\epsilon}^{+},g) \hookrightarrow \R^{m} \textrm{ is an isometric embedding with } v|_\Sigma = f\}.$$

Now we are in a position to state our results. One of our main results concerns the rigidity and flexibility of  $C^{1,\theta}$ isometric extensions.

\begin{theorem}\label{t:rigidity-flexibility}
 Let $\Sigma$ be a codimension one oriented submanifold of the compact Riemannian manifold $(\mathcal{M}, g)$, where $ g\in C^1$, and let $\nu$ be the unique unit normal vectorfield respecting the orientation. Suppose moreover that $f:\Sigma\to \R^{m}$ is an isometric embedding satisfying \eqref{e:HW} and let $X\in \Gamma(T \Sigma)$ be any vectorfield tangent to $\Sigma$. Then the following holds.
\begin{enumerate}
\item If $\theta>\frac12,$ $m\geq n+1,$ and  $u\in\mathcal{I}^\theta_m(\Sigma_\epsilon^{+}),$ then $\langle du(\nu), \bar L(X,X)\rangle=L(X, X).$
\item For any $\theta<\frac12,$ $m\geq n+2n_*$, there is $\epsilon>0$ and  $u\in \mathcal{I}^\theta_m(\Sigma_\epsilon ^{+})$ such that $\langle du(\nu), \bar L(X, X)\rangle>L(X, X)$ at all points where $X$ does not vanish. 
\end{enumerate}
\end{theorem}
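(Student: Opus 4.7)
The plan is to treat the two claims separately. Part (i) should follow from the mollification/commutator technique of \cite{DIS20}, adapted to arbitrary codimension by exploiting that $du(\nu)$ automatically lies in $N_{f(p)}f(\Sigma)$. Part (ii) should follow by combining the one-sided extension strategy of \cite{HunWas16,CS19} with the Nash--Kuiper scheme attaining the $\tfrac12$ threshold from \cite{CS20}, launched from a strict subsolution furnished by \eqref{e:HW}.

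For (i), I would begin by checking the identity at the smooth level. Given $X\in T_p\Sigma$ extended to a tangent vector field $\tilde X$ on $\Sigma$, differentiating the equality $du(\tilde X)=df(\tilde X)$ on $\Sigma$ in the direction $X$ and applying the Gauss formulas for $\Sigma\subset\mathcal M$ and $u(\mathcal M)\subset\R^m$ respectively yields
\[
L(X,X)\,du(\nu) + h(X,X) \;=\; \bar L(X,X),
\]
where $h$ denotes the second fundamental form of $u(\mathcal M)\subset\R^m$. Since $du(\nu)\in Tu(\mathcal M)$ is a unit vector orthogonal to $h(X,X)\in Nu(\mathcal M)$, pairing with $du(\nu)$ produces the identity. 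To upgrade this to $u\in C^{1,\theta}$ with $\theta>\tfrac12$, I would mollify $u$ to $u_\epsilon$ and use a Constantin--E--Titi type commutator estimate
\[
\bigl\|(u_\epsilon)^\sharp e - (u^\sharp e)\ast\rho_\epsilon\bigr\|_0 = O(\epsilon^{2\theta}),
\]
which, since $2\theta>1$, gives $o(\epsilon)$ control on the mollified metric defect and thus lets one define a weak analogue of $h$ for $u_\epsilon$ converging as $\epsilon\to 0$. Passing to the limit in the decomposition above produces the claim pointwise on $\Sigma$. The merely $C^1$ regularity of $g$ is tolerated since the commutator estimate does not require more.

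For (ii), I would first produce a smooth short extension $u_0:\Sigma_\epsilon^+\to\R^m$ with $u_0|_\Sigma=f$, with $du_0(\nu)$ $C^0$-close to the field $\mu$ of \eqref{e:HW}, and with $g-u_0^\sharp e$ small, symmetric and positive definite. Condition (iii) of \eqref{e:HW} then ensures the strict inequality $\langle du_0(\nu),\bar L(X,X)\rangle>L(X,X)$ on $\Sigma$. I would then iterate the convex integration scheme of \cite{CS20}: at each step the metric defect is decomposed into $n_*$ primitive symmetric rank-one tensors and each is absorbed by a Nash spiral oscillating in a pair of the $2n_*$ extra codimensions, which explains the assumption $m\geq n+2n_*$. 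The corrections are cut off near $\Sigma$ so that $u_q|_\Sigma=f$ is preserved and $du_q(\nu)|_\Sigma$ remains $C^0$-close to $du_0(\nu)$ throughout. This ensures that the strict inequality survives in the $C^{1,\theta}$ limit for every $\theta<\tfrac12$.

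The main obstacle will be, in part (ii), the compatibility between the sharp Hölder estimates of \cite{CS20} and the one-sided boundary cutoffs required to freeze $v|_\Sigma=f$: the cutoffs must vanish strongly enough near $\Sigma$ to preserve both the boundary datum and the strict inequality, yet slowly enough that the iteration still aggregates to the optimal $C^{1,\theta}$ bound, $\theta<\tfrac12$. A secondary subtlety in part (i) is that the limiting identity must hold pointwise on the lower-dimensional $\Sigma$ rather than almost everywhere on $\Sigma_\epsilon^+$, which forces reliance on the precise gain $2\theta-1>0$ from the commutator estimate rather than weaker interpolation bounds.
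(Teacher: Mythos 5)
Your part (1) is essentially the paper's argument: the smooth identity $L(X,X)\,du(\nu)+A(X,X)=\bar L(X,X)$ (with $A$ the second fundamental form of $u(\mathcal M)\subset\R^m$), obtained by differentiating $du(\tilde X)=df(\tilde X)$ along $\Sigma$, together with the quadratic commutator gain $\epsilon^{2\theta-1}\to0$ for $\theta>\frac12$, is exactly the mechanism behind Proposition 2.2 of \cite{DIS20}, which the paper invokes. One simplification: since $u=f$ on $\Sigma$ and $f$ is smooth, $t\mapsto du(X)(\gamma(t))$ is already classically differentiable for curves $\gamma\subset\Sigma$, so you only need the weak projection identity \eqref{e:projection} for the tangential part, not a full weak second fundamental form of $u(\mathcal M)$.

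In part (2) the architecture you describe (short one-sided extension built from the field $\mu$ of \eqref{e:HW}, perturbations cut off away from $\Sigma$, global iteration in the style of \cite{CS20}) matches the paper, and the boundary-cutoff/H\"older tension you flag is indeed handled there via a Whitney decomposition into dyadic shells and the ``adapted short embedding'' formalism. However, there is a genuine gap at the core of the scheme: the decomposition you propose --- writing the defect as $\sum_k a_k^2\,\nu_k\otimes\nu_k$ and absorbing each primitive by a Nash spiral --- does not reach $\theta<\frac12$, and \cite{CS20} does not ``attain the $\frac12$ threshold'' (in codimension one it gives $\theta<\frac{1}{1+n+n^2}$). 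The obstruction is the linear error $2\,a_k\,\mathrm{sym}(\nabla u^TA_k)$ together with the terms involving $\lambda^{-\tau}\nabla u^TB_k$, $A_i^TB_j$ and $B_i^TB_j$: with the plain decomposition these survive into the new defect at size of order $\delta\lambda^{1-\tau}$, which is far too large (compared with the required $\delta\lambda^{2-2\tau}$, $\tau>1$) to close the iteration at any exponent near $\frac12$. The paper's essential ingredient is the K\"all\'en-type perturbed decomposition (Lemma \ref{l:perturb}, after \cite{K78,DIS20}), which solves $P=\sum_i a_i^2\nu_i\otimes\nu_i+\sum_i a_i\Lambda_i+\sum_{i,j}a_ia_j\Theta_{ij}$ by the implicit function theorem so that precisely these error terms are absorbed into the coefficients. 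Your proposal contains no substitute for this step. A second, related omission: the pieces to be added have the degenerate form $\rho^2(g+h)$ with $\rho$ vanishing near $\Sigma$ and near the cut-off boundaries, so the perturbed decomposition cannot be applied directly (one would have to divide $\Lambda_k$ by $\rho$); Proposition \ref{p:stage} introduces the extra cut-off scale $\epsilon^{1/2}=C_0\delta^{1/2}\lambda^{1-\tau}$ and the function $\psi(\rho)$ exactly to resolve this. Without these two ingredients the iteration as you describe it would only yield some dimension-dependent threshold strictly smaller than $\frac12$, not the full range $\theta<\frac12$.
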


Here, $n_* =\frac{n(n+1)}{2}$ is the number of equations in \eqref{e:isometric}.  As mentioned above, the proof of part (1) is a simple observation given the result (Proposition 2.2) in  \cite{DIS20} (see Section \ref{s:rigidity}). The isometries in (2) are constructed via a convex integration process similar to \cite{DIS20}. Roughly speaking, the technical difference of (2) to the corresponding part in \cite{DIS20} is that it is of global instead of local nature. Therefore we adapt the "gluing" method introduced in \cite{CS20} for the construction of \emph{global} $C^{1,\theta}$ isometric embeddings to our needs. In particular, to get the regularity $C^{1,\theta}$, $\theta<\frac{1}{2}$, we need a more subtle decomposition lemma (see Lemma \ref{l:perturb}) than in \cite{CS20}, it is similar to the one used in \cite{K78,DIS20}. This however leads to technical difficulties due to the different type of cut-off functions used in \cite{CS20} as compared to \cite{DIS20}; they are resolved in Proposition \ref{p:stage}.

The iteration technique used in the proof of part (2) has its origin in Nash's original construction \cite{Nash54}. The latter inspired the more general framework of convex integration, which remarkably also found application in the question of non-uniqueness of fluid mechanic equations and led for example to the resolution of Onsager's conjecture (see \cite{CET, BDSV17,Ise16}), a striking analogue to the dichotomy of rigidity vs flexibility of isometric embeddings.

One of the main building blocks of the proof of Theorem \ref{t:rigidity-flexibility} (2) is the iteration Proposition \ref{p:inductive}. With it we can prove our second result, the existence of global  isometric embeddings of compact manifolds with $C^1$ metric.

\begin{theorem}\label{t:global}
Let $(\mathcal{M}, g)$ be an $n$-dimensional compact manifold, $n\geq2$, with $C^1$ metric $g$. For any $\theta<\frac12,$ there exist infinitely many $C^{1, \theta}$ isometric embeddings $u:(\mathcal{M}, g)\hookrightarrow \R^{n+2n_*}.$
\end{theorem}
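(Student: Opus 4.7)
The plan is to obtain $u$ as the $C^{1,\theta}$-limit of a sequence of smooth short embeddings $u_k\colon \mathcal{M}\to \R^{n+2n_*}$ produced by iterating Proposition \ref{p:inductive}, along the lines of the Nash--Kuiper scheme as refined in \cite{CS20}. The first ingredient is an initial smooth short map $u_0$: since $\mathcal{M}$ is a smooth compact $n$-manifold and $n+2n_*=n^{2}+2n\geq 2n+1$, Whitney's theorem provides a smooth embedding into $\R^{n+2n_*}$, and rescaling it by a sufficiently small factor makes the deficit $g-u_0^{\sharp}e$ a smooth, uniformly positive definite symmetric $(0,2)$-tensor on $\mathcal{M}$, which is the input required by Proposition \ref{p:inductive}.

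Next, I would iteratively apply Proposition \ref{p:inductive} with carefully tuned parameters. Fix $\beta$ with $\theta<\beta<\tfrac{1}{2}$ and choose double-exponentially growing frequencies $\lambda_k=a^{b^k}$ together with amplitudes $\delta_k=\lambda_k^{-2\beta}$, with $a\gg 1$ and $b>1$ taken close enough to $1$ so that the (quadratic) metric-error relation $\delta_{k+2}\gtrsim \lambda_{k}\lambda_{k+1}^{-1}\delta_{k+1}$ closes up. The proposition then delivers smooth maps $u_k$ satisfying estimates of the form
\[
\|u_{k+1}-u_k\|_{C^{0}}\lesssim \frac{\delta_{k+1}^{1/2}}{\lambda_{k+1}},\qquad \|u_{k+1}-u_k\|_{C^{1}}\lesssim \delta_{k+1}^{1/2},\qquad \|u_{k+1}\|_{C^{2}}\lesssim \delta_{k+1}^{1/2}\lambda_{k+1},
\]
along with the improved metric defect $\|g-u_{k+1}^{\sharp}e\|_{C^{0}}\lesssim \delta_{k+2}$.

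Interpolating between the $C^{1}$ and $C^{2}$ bounds gives
\[
\|u_{k+1}-u_k\|_{C^{1,\theta}}\lesssim \delta_{k+1}^{1/2}\lambda_{k+1}^{\theta}=\lambda_{k+1}^{\theta-\beta},
\]
which is summable since $\theta<\beta$ and $\lambda_k$ grows double-exponentially. Hence $(u_k)$ is Cauchy in $C^{1,\theta}$, converging to some $u\in C^{1,\theta}(\mathcal{M},\R^{n+2n_*})$. Passing to the limit in the defect estimate yields $u^{\sharp}e=g$, and for large $k$ the map $u_k$ is a smooth embedding $C^{1}$-close to $u$, so $u$ is itself an embedding. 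For the infinitely-many statement I would exploit the freedom built into the convex integration at each stage (for instance, different admissible choices of decomposition directions in Lemma \ref{l:deco}, or injecting an orthogonal perturbation of variable size at a single iteration step), which produces uncountably many distinct $C^{1,\theta}$ limits. The substantive obstacle is really hidden inside Proposition \ref{p:inductive}, whose proof requires the refined decomposition lemma and the global cut-off construction of Proposition \ref{p:stage}; once those are available the iteration itself is standard.
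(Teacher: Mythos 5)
Your proposal misreads the role of Proposition \ref{p:inductive}, and as written the iteration would not produce a $C^{1,\theta}$ map. Proposition \ref{p:inductive} is not a single ``stage'' parametrized by an amplitude $\delta_k$ and a frequency $\lambda_k$: it carries no such parameters, it does not return a smooth map with $\|u_{k+1}\|_{C^2}\lesssim \delta_{k+1}^{1/2}\lambda_{k+1}$ and defect $\lesssim\delta_{k+2}$ (those are the conclusions of the inner building block, Proposition \ref{p:stage}), and its output is only $C^{1,\theta'}$ with a \emph{strictly smaller} exponent $\theta'=\theta/b^2$. The infinite Nash--Kuiper-type iteration you describe is already carried out \emph{inside} the proof of Proposition \ref{p:inductive}; its conclusion is an embedding which is exactly isometric on the larger compact set $S$ (one has $g-\bar u^{\sharp}e=\bar\rho^2(g+\bar h)$ with $\bar\rho$ vanishing on $S$). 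Consequently the outer loop must be \emph{finite}: one fixes a triangulation of $\mathcal M$ and applies Proposition \ref{p:inductive} once per skeleton dimension, $\Sigma_0=\emptyset\subset\Sigma_1=\mathcal V\subset\cdots\subset\Sigma_{n+1}=\mathcal M$, so that consecutive sets satisfy Condition \ref{c:geometric} (a hypothesis your scheme never addresses) and the final exponent is $\theta_{n+1}=b^{-2(n+1)}\theta_0$, which is made arbitrarily close to $\tfrac12$ by taking $b\to1$. If instead you applied Proposition \ref{p:inductive} infinitely often, the exponent $\theta_0 b^{-2k}$ would degenerate to $0$.

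Two further points. First, a rescaled Whitney embedding is not admissible input for Proposition \ref{p:inductive}: the hypothesis is that $u$ is an adapted short embedding in the sense of Definition \ref{d:adapted}, i.e. $g-u^{\sharp}e=\rho^2(g+h)$ with $|h|\le\sigma<\sigma_0/4$ and the pointwise bounds \eqref{e:inductive-u-rho-assumption}; a generic uniformly positive definite deficit need not have this form, and the paper inserts Proposition \ref{p:strong} precisely to manufacture it, writing $g-\tilde u^{\sharp}e=\delta^*(g+\tilde h)$ with $\tilde h$ small in $C^0$. Second, for ``infinitely many'' the argument actually used is the $C^0$-estimate $\|u-v\|_0\le\epsilon$: isometric embeddings can be found arbitrarily $C^0$-close to any prescribed strictly short embedding, e.g. rescalings into balls of distinct small radii, which forces the resulting isometries to be pairwise distinct. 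Your suggestion of varying the decomposition directions is not obviously sufficient, since different admissible choices could a priori converge to the same limit.
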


Such an existence result is not new: it is already contained in \cite{K78}. The novelty of Theorem \ref{t:global} is that the target dimension $n+2n_*=n(n+2),$ is much smaller than the dimension $2n+3(n+1)(n^2+n+2)$ in \cite{K78} for the case where the metric is of class $C^1$, i.e. $\beta=1$ in \cite{K78}.

The remaining part of the paper is organized as follows. We prove  Theorem \ref{t:rigidity-flexibility}~(1) in Section \ref{s:rigidity}. The main part of the paper is then devoted to proving Theorem \ref{t:rigidity-flexibility}~(2). We first introduce some notations and useful lemmas in Section \ref{s:preliminary}. We then prove the most important building block, an iteration proposition, in Section \ref{s:iteration}. With it we are able to show Theorem \ref{t:rigidity-flexibility}~(2) and Theorem \ref{t:global} in Section \ref{s:flexibility} and Section \ref{s:global} respectively.

%The remaining part of the paper is organized as follows. We give some notations and useful lemmas in Section \ref{s:preliminary}. Then we give the proof of Theorem \ref{t:rigidity-flexibility}~(1) in Section \ref{s:rigidity}. To prove Theorem \ref{t:rigidity-flexibility}~(2), we first prove an iteration proposition in Section \ref{s:iteration}. With Proposition \ref{p:inductive}, we are able to show Theorem Theorem \ref{t:rigidity-flexibility}~(2) and Theorem \ref{t:global} in Section \ref{s:flexibility} and Section \ref{s:global} respectively.

\section*{acknowledgement}
The authors would like to thank the anonymous referees for the constructive suggestions.
%%%%%%%%%%%%%%%
\section{The proof of Theorem \ref{t:rigidity-flexibility}~(1): Rigidity part}\label{s:rigidity}
%%%%%%%%%%%%%%%%

Recall that for a smooth  isometric  embedding $u:\mathcal M\hookrightarrow \R^{m}$, a curve $\gamma:[0,1]\to \mathcal M$ and a vectorfield $X \in  \Gamma(T\mathcal M)$ it holds by Gauss' formula
\[  \left (\frac{d}{dt}\Big\vert_{t=t_0}du(X)(\gamma)\right )^{\intercal} = du\left (\nabla ^{\mathcal M}_{\dot{\gamma}(t_0)} X\right )\,\]
for $t_0\in [0,1]$. Here, $^{\intercal}$ denotes the orthogonal projection onto $Tu(\mathcal M)\subset \R^{m}$.

Thus, in particular, if $\nu \in \Gamma( T\mathcal M)$, then
\begin{equation}\label{e:projection}
\langle \frac{d}{dt} du(X)(\gamma), du(\nu) (\gamma)\rangle = \langle du\left (\nabla ^{\mathcal M}_{\dot{\gamma}(t)} X\right ), du(\nu) (\gamma)\rangle \,
\end{equation}
holds on $[0,1]$. In Proposition 2.2 of \cite{DIS20}, the authors show that the lefthandside of the latter equation is welldefined as a distribution whenever $u\in C^{1,\theta}$ for $\theta >1/2$, and that \eqref{e:projection} still holds.

From this, part (1) of Theorem \ref{t:rigidity-flexibility} follows easily. Let $X\in \Gamma(T\Sigma)$, $p \in \Sigma$ and $\gamma:[0,1]\to \Sigma$ with $\gamma(0) =p$, $\dot{\gamma}(0) = X_p$. Let moreover $\nu \in \Gamma(T\mathcal M)$ be the unique unit normal  to $\Sigma$ respecting the orientation. Since $u=f$ on $\Sigma$ and $f$ is smooth, the function $du(X)(\gamma): [0,1]\to \R^{m}$ is smooth, so that \eqref{e:projection} holds as a pointwise equality of continuous functions even if $u$ is only $C^{1,\theta}$ with $\theta >\frac{1}{2}$.

With the help of the Gauss formula for the smooth embeddings $f:\Sigma \to \R^{m}$, $\iota: \Sigma \hookrightarrow \mathcal M$  we then find
\begin{align*}
 \langle du_p(\nu) , \bar L_p(X,X) \rangle &= \langle du_p(\nu), \frac{d}{dt}\Big \vert_{t=0} du(X)(\gamma)\rangle = \langle du_p( \nu), du\left (\nabla^{\mathcal M}_{\dot{\gamma}(0) }X\right ) \rangle \\
 &= g\left (\nu_p,\nabla^{\mathcal M}_{X_p} X\right ) =  L_p(X,X)
\end{align*}
since $u$ is an isometry.

\section{Preliminaries}\label{s:preliminary}
%%%%%%%%%%%%%%%%%%
In this section we introduce some notations, function spaces and basic lemmas which are needed for the proof of the flexibility part of Theorem \ref{t:rigidity-flexibility}.

\subsection{H\"older Norms and interpolation} Let $\Omega \subset \R^{n}$ be an open set. In the following, the maps $f$ can be real valued, vector valued, or tensor valued. In every case, the target is equipped with the Euclidean norm, denoted by $|f(x)|$. The H\"older norms are then defined as follows:
\begin{equation*}
\|f\|_0=\sup_{\Omega}|f|, ~~\|f\|_m=\sum_{j=0}^m\max_{|\beta|=j}\|\partial^\beta f\|_0,,
\end{equation*}
where $\beta$ denotes a multi-index,
and
\begin{equation*}
[f]_{\theta}=\sup_{x\neq y}\frac{|f(x)-f(y)|}{|x-y|^\theta},~~[f]_{m+\theta}=\max_{|\beta|=m}\sup_{x\neq y}\frac{ |\partial^\beta f(x)-\partial^\beta f(y)|}{|x-y|^\theta}, 0<\theta\leq1.
\end{equation*}
Then the H\"older norms are given as
$$\|f\|_{m+\theta}=\|f\|_m+[f]_{m+\theta}. $$
We recall the standard interpolation inequality
\begin{equation*}
[f]_r\leq C\|f\|_0^{1-\frac{r}{s}}[f]_s^{\frac{r}{s}}
\end{equation*}
for $s>r\geq0$ and the Leibniz rule
\begin{equation}\label{e:interpolation}
\|fg\|_r\leq C(r)(\|f\|_r\|g\|_0+\|f\|_0\|g\|_r).
\end{equation}
We also collect  two classical estimates on H\"older norms of compositions in \cite{K78}.

\begin{lemma}\label{l:composition}
Let $\Psi: \Sigma_1\to\R$ be a function  and $u, v:\R^n\to\Sigma_1$. Then for any $r, s\geq0$, it holds
\begin{align*}
&\|\Psi\circ u\|_r\leq C(r)(\|\Psi(\cdot)\|_r\|u\|_1^r+\|\Psi(\cdot)\|_1\|u\|_r+\|\Psi(\cdot)\|_0),\, r\geq1,\\
&\|\Psi\circ u\|_r\leq \min(\|\Psi(\cdot)\|_r\|u\|_1^r,\, \|\Psi(\cdot)\|_1\|u\|_r)+\|\Psi(\cdot)\|_0,\, 0\leq r\leq1.
\end{align*}
\end{lemma}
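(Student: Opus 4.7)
The plan is to prove the two statements separately, starting with the easier range $0 \leq r \leq 1$ and then tackling $r \geq 1$ via the Faà di Bruno chain rule combined with interpolation.

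For the range $0 \leq r \leq 1$, the estimate is essentially immediate from the definition of the Hölder seminorm. One writes, for $x \neq y$,
\begin{equation*}
|\Psi(u(x)) - \Psi(u(y))| \leq [\Psi]_r |u(x)-u(y)|^r \leq [\Psi]_r \|u\|_1^r |x-y|^r,
\end{equation*}
which yields the first bound inside the $\min$. Alternatively,
\begin{equation*}
|\Psi(u(x)) - \Psi(u(y))| \leq [\Psi]_1 |u(x)-u(y)| \leq [\Psi]_1 [u]_r |x-y|^r,
\end{equation*}
which yields the second. Adding $\|\Psi\|_0$ to control the $C^0$-norm of $\Psi \circ u$ gives the claim.

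For $r \geq 1$, I would first treat the integer case $r = m \in \mathbb{N}$. By the Faà di Bruno formula, any $m$-th partial derivative of $\Psi\circ u$ is a finite linear combination of terms of the form $\Psi^{(k)}(u) \cdot \prod_{j=1}^{m} (\partial^{j} u)^{a_j}$ with $\sum_j j a_j = m$ and $\sum_j a_j = k$, where $1 \leq k \leq m$. Interpolating $\|\partial^j u\|_0 \leq C\|u\|_1^{(m-j)/(m-1)} \|u\|_m^{(j-1)/(m-1)}$ for $1\leq j\leq m$, one obtains bounds of the form
\begin{equation*}
\bigl\|\Psi^{(k)}(u) \textstyle \prod_j (\partial^j u)^{a_j}\bigr\|_0 \leq C \|\Psi\|_k \, \|u\|_1^{m(k-1)/(m-1)} \|u\|_m^{(m-k)/(m-1)}.
\end{equation*}
A second interpolation $\|\Psi\|_k \leq C\|\Psi\|_1^{(m-k)/(m-1)} \|\Psi\|_m^{(k-1)/(m-1)}$ groups everything into $(\|\Psi\|_m \|u\|_1^m)^{(k-1)/(m-1)} (\|\Psi\|_1 \|u\|_m)^{(m-k)/(m-1)}$. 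Young's inequality with the conjugate exponents $(m-1)/(k-1)$ and $(m-1)/(m-k)$ then bounds each such term by $C\,[\|\Psi\|_m\|u\|_1^m + \|\Psi\|_1\|u\|_m]$, and summing over the finitely many Faà di Bruno terms, together with the trivial estimate $\|\Psi\circ u\|_0 \leq \|\Psi\|_0$, yields the desired bound.

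For non-integer $r = m + \alpha$ with $m \in \mathbb{N}$ and $0 < \alpha < 1$, I would apply the $0 \leq r \leq 1$ estimate already proved to each of the factors of a Faà di Bruno term, using that $\partial^j u$ and $\Psi^{(k)}$ have $\alpha$-Hölder seminorms controlled by $\|u\|_{m+\alpha}$ and $\|\Psi\|_{m+\alpha}$ respectively. Combining this with the Leibniz rule \eqref{e:interpolation} and the same Young-type bookkeeping gives the fractional version. The main technical annoyance, and the only step requiring real care, is verifying that the exponents produced by the two interpolations really sum correctly so that one lands on the two-term bound $\|\Psi\|_r\|u\|_1^r + \|\Psi\|_1\|u\|_r$; this is just a linear-algebra check on the multi-indices $(a_1,\dots,a_m)$ but must be done cleanly. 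Once this combinatorial accounting is in place, both inequalities follow, giving the lemma as stated in \cite{K78}.
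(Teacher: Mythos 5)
Your proof is correct. The paper itself offers no proof of this lemma --- it is quoted as a classical estimate from K\"all\'en \cite{K78} --- so there is nothing to compare against except the standard argument, which is exactly what you give: the $0\leq r\leq 1$ case by the two elementary chains of H\"older bounds, and the $r\geq 1$ case by Fa\`a di Bruno, the interpolation $\|\partial^j u\|_0\leq C\|u\|_1^{(m-j)/(m-1)}\|u\|_m^{(j-1)/(m-1)}$ together with $\|\Psi\|_k\leq C\|\Psi\|_1^{(m-k)/(m-1)}\|\Psi\|_m^{(k-1)/(m-1)}$, and Young's inequality; your exponent accounting ($\sum_j a_j(m-j)=m(k-1)$, $\sum_j a_j(j-1)=m-k$) checks out. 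The only small point you pass over is that for derivatives of order $l<m$ the same interpolation leaves an extra factor which Young's inequality turns into a third term of the form $\|\Psi\|_1\|u\|_1$, absorbed by $\|\Psi\|_1\|u\|_r$ since $\|u\|_1\leq\|u\|_r$; this and the fractional case are routine and your outline handles them adequately.
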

\noindent Other properties of the H\"older norm can be found in references such as \cite{CDS12, DIS}.

\subsection{Mollification estimates}

We will frequently regularize maps by convolution with a standard mollifier, i.e., a radially symmetric $\varphi_\ell\in C^{\infty}_c(B_\ell(0))$ with $\int\varphi_\ell = 1$, where $\ell>0$ denotes the length-scale. Such a regularization of H\"older functions enjoys the following estimates (for a proof, see \cite{CDS12},\cite{DIS}).
\begin{lemma}\label{l:mollification}
For any $r, s\geq0,$ and $0<\theta\leq1,$ we have
\begin{align*}
&[f*\varphi_\ell]_{r+s}\leq C\ell^{-s}[f]_r,\\
&\|f-f*\varphi_\ell\|_r\leq C\ell^{1-r}[f]_1, \text{ if } 0\leq r\leq1,\\
&\|(fg)*\varphi_\ell-(f*\varphi_\ell)(g*\varphi_\ell)\|_r\leq C\ell^{2\theta-r}\|f\|_\theta\|g\|_\theta,
\end{align*}
with constant $C$ depending only on $s, r, \theta, \varphi.$
\end{lemma}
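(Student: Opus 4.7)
\textbf{Proof plan for Lemma~\ref{l:mollification}.} My plan is to derive all three estimates from the scaling and moment properties of the mollifier $\varphi_\ell(y) = \ell^{-n}\varphi(y/\ell)$: it is supported in $B_\ell(0)$, integrates to $1$, and its derivatives satisfy $\|\partial^\beta\varphi_\ell\|_{L^1}\leq C_\beta\ell^{-|\beta|}$ together with $\int\partial^\beta\varphi_\ell\,dy=0$ for $|\beta|\geq 1$. For the first estimate, write $r+s = m+\alpha$ with $m\in\mathbb N$, $\alpha\in(0,1]$, fix a multi-index $\beta$ of order $m$, and split $\partial^\beta(f*\varphi_\ell)=(\partial^{\beta_1}f)*(\partial^{\beta_2}\varphi_\ell)$ via integration by parts, choosing $|\beta_1|$ to match the integer part of $r$. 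When the remaining H\"older order $\alpha$ fits naturally under $[\partial^{\beta_1}f]_{\alpha}\leq[f]_r$, Young's inequality against $\|\partial^{\beta_2}\varphi_\ell\|_{L^1}\leq C\ell^{-|\beta_2|}$ closes the estimate; in the residual case the zero-moment identity lets one transfer the H\"older increment onto one further derivative of $\varphi_\ell$, absorbing the missing fractional scaling.

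For the second estimate at $r=0$, I would use
\[
f(x) - f*\varphi_\ell(x) = \int\bigl(f(x)-f(x-y)\bigr)\varphi_\ell(y)\,dy
\]
and bound the integrand by $[f]_1|y|\leq [f]_1\ell$ since $\supp\varphi_\ell\subset B_\ell(0)$. At $r=1$, differentiation commutes with convolution and one estimates $\|\partial f-(\partial f)*\varphi_\ell\|_0\leq 2\|\partial f\|_0\leq 2[f]_1$, so that $\|f-f*\varphi_\ell\|_1\leq C[f]_1$. The intermediate case $0<r<1$ then follows from the interpolation inequality stated just before the lemma.

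For the commutator estimate, the key ingredient is the Constantin--E--Titi identity
\[
\bigl((fg)*\varphi_\ell-(f*\varphi_\ell)(g*\varphi_\ell)\bigr)(x)=\int\bigl(f(x-y)-f*\varphi_\ell(x)\bigr)\bigl(g(x-y)-g*\varphi_\ell(x)\bigr)\varphi_\ell(y)\,dy,
\]
which one verifies by direct expansion using $\int\varphi_\ell=1$. For $r=0$ the uniform bound $|f(x-y)-f*\varphi_\ell(x)|\leq 2[f]_\theta\ell^\theta$ (and its analogue for $g$) yields the $\ell^{2\theta}$ decay. For $r>0$ I would differentiate the identity under the integral, use Leibniz to distribute derivatives between $\varphi_\ell$ and the two finite-difference factors (whose $x$-dependence also involves the convolved quantities $f*\varphi_\ell(x)$, $g*\varphi_\ell(x)$), and bound each resulting term using the first estimate of the lemma. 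The main obstacle is precisely this last step when $r$ exceeds $2\theta$: many derivatives are then forced to fall on $\varphi_\ell$, each costing a factor $\ell^{-1}$, and one must carefully verify that the quadratic H\"older gain from the two finite-difference factors is preserved intact, producing exactly $\ell^{2\theta-r}$ without any logarithmic or power loss. Careful bookkeeping of which derivative lands on the kernel versus on the finite-difference factors is required.
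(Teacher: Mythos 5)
Your proposal is correct and follows essentially the standard argument that the paper itself defers to (it cites \cite{CDS12,DIS} rather than proving the lemma): moment/scaling properties of $\varphi_\ell$ plus interpolation for the first two estimates, and a Constantin--E--Titi-type quadratic identity for the commutator, with your single-integral variant of that identity being algebraically valid. The only practical remark is that the bookkeeping you worry about for $r>0$ becomes trivial if you use the symmetrized form $\tfrac12\iint (f(x-y)-f(x-z))(g(x-y)-g(x-z))\varphi_\ell(y)\varphi_\ell(z)\,dy\,dz$ and substitute $u=x-y$, $v=x-z$, so that all $x$-dependence sits on the kernels and the fractional part of $r$ is handled by interpolating between consecutive integer orders.
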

In the course of the proof of Theorem 1.1 we will regularize maps $f\in C^{k}(\bar \Omega,\R^{m})$ defined on $\bar \Omega$. The resulting  regularized maps will then have a smaller domain of definition. To counteract this, we first extend $f$ to a map $\bar f\in C^{k}\left (\R^{n},\R^{m}\right )$ such that
\[ \|f\|_{C^{k}(\R^{m})} \leq C\|f\|_{C^{k}(\bar \Omega)}\,,\]
where the constant $C>0$ depends only on $k,n$ and $\Omega$. Such a procedure is given by Whitney's extension theorem, see \cite{Whitney}. We then mollify the resulting extensions at some length-scale $\ell>0$ to obtain $\tilde f= \bar f\ast \varphi_\ell \in C^{\infty}(\bar \Omega,\R^{m})$. We will not further specify this.

\subsection{H\"older norms and mollification on manifolds}
Using a partition of unity, H\"older spaces and mollification can be defined on the compact manifold $\mathcal M$ as follows. We fix a finite atlas of $\mathcal M$ with charts $(\Omega_i,\phi_i)$, we let $\{\chi_i\}$ be a partition of unity subordinate to $\{\Omega_i\}$ and set
\[ \|f\|_k = \sum_i \|(\chi_i f) \circ \phi_i^{-1}\|_k\,,\]
and
\[ f\ast \varphi_\ell = \sum_i \left ((\chi_i f)\circ \phi_i^{-1}\ast \varphi_\ell\right )\circ \phi_i\,.\]
One can check that the estimates \eqref{e:interpolation} as well as Lemma \ref{l:composition} and \ref{l:mollification} still hold (with constants which may depend on the fixed charts).

\subsection{Matrix decomposition} A key step in the construction of isometric embedding, as pioneered by Nash \cite{Nash54} and used in all the subsequent variants of the iteration process, is a suitable decomposition of the metric error.  We recall the version used in \cite{CDS12, DIS}.

\begin{lemma}\label{l:deco}
Let $n\geq 2$ and let $\bar P \in \R^{n\times n} $ be a positive definite matrix.  There exists a constant $r_0>0$, vectors $\nu_1,\dots,\nu_{n_*}\in \mathbb{S}^{n-1}$ and smooth functions $a_k$ such that

$$
P=\sum_{k=1}^{n_*}a_k^2(P)\nu_k\otimes\nu_k\,,
$$
for any positive definite matrix $P\in \R^{n\times n}$ with
\begin{equation}\label{e:deco}
|P-\bar P|< r_0\,.
\end{equation}

\end{lemma}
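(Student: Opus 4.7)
The plan is to reduce the lemma to a single base decomposition at $P=\bar P$ and then to propagate it to a neighborhood by continuity. Specifically, I would first produce unit vectors $\nu_1,\dots,\nu_{n_*}\in\mathbb{S}^{n-1}$ and strictly positive reals $b_1,\dots,b_{n_*}$ such that
\[
\bar P=\sum_{k=1}^{n_*}b_k\,\nu_k\otimes\nu_k,
\]
and such that $\{\nu_k\otimes\nu_k\}_{k=1}^{n_*}$ is a basis of the $n_*$-dimensional space $\mathrm{Sym}(n)$ of symmetric $n\times n$ matrices. Granted this, the linear map $L:\R^{n_*}\to\mathrm{Sym}(n)$ defined by $L(c):=\sum_k c_k\,\nu_k\otimes\nu_k$ is an isomorphism, and its coordinate functions $b_k(P):=(L^{-1}P)_k$ are linear---hence $C^\infty$---in $P$, with $b_k(\bar P)=b_k>0$. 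By continuity, there exists $r_0>0$ such that $b_k(P)>0$ whenever $|P-\bar P|<r_0$, and on this neighborhood I would set $a_k(P):=\sqrt{b_k(P)}$, which is smooth; the identity $P=\sum_k a_k^2(P)\,\nu_k\otimes\nu_k$ is then automatic from $L(a_1^2,\dots,a_{n_*}^2)=P$.

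To obtain the base decomposition, I would exploit the action $M\mapsto AMA^{T}$ of $\mathrm{GL}(n,\R)$ on $\mathrm{Sym}(n)$, which is a linear isomorphism for any invertible $A$ and sends the positive-definite cone onto itself, to reduce to the case $\bar P=I$. Setting $A:=\bar P^{1/2}$ so that $\bar P=AA^{T}$, a decomposition $I=\sum_k c_k\,\mu_k\otimes\mu_k$ with $|\mu_k|=1$, $c_k>0$ and $\{\mu_k\otimes\mu_k\}$ linearly independent gives
\[
\bar P=AIA^{T}=\sum_k c_k\,(A\mu_k)(A\mu_k)^{T}=\sum_k b_k\,\nu_k\otimes\nu_k
\]
with $\nu_k:=A\mu_k/|A\mu_k|\in\mathbb{S}^{n-1}$ and $b_k:=c_k|A\mu_k|^2>0$; linear independence of $\{\nu_k\otimes\nu_k\}$ is inherited from that of $\{\mu_k\otimes\mu_k\}$ under the isomorphism $M\mapsto AMA^{T}$ and positive rescaling. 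For $I$ the required decomposition can be written down explicitly in low dimension, e.g.\ for $n=2$ the three vectors $\mu_k=(\cos(2\pi k/3),\sin(2\pi k/3))$ yield $I_2=\tfrac{2}{3}\sum_k\mu_k\otimes\mu_k$ with linear independence a direct check, and in higher dimension one proceeds either by analogous symmetric polytopal arrangements or abstractly: since $I$ lies in the interior of the positive semidefinite cone, whose extreme rays are precisely $\mathbb{R}_{\geq 0}\,\nu\otimes\nu$ for $\nu\in\mathbb{S}^{n-1}$, Carath\'eodory's theorem for convex cones produces a representation of $I$ by at most $n_*$ rank-ones with positive coefficients, and a generic small perturbation of the $\nu_k$'s restores linear independence while preserving positivity by openness.

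The main technical point, and the one I expect to be subtlest, is exhibiting the base decomposition of $I$ in arbitrary dimension: because $I$ is diagonal, off-diagonal contributions of the chosen rank-ones must cancel, while the requirement that all coefficients be strictly positive rules out the naive symmetric choice $\{e_i\}\cup\{(e_i+e_j)/\sqrt{2}\}$ (which forces the off-diagonal coefficients to vanish). Once this step is done, the remainder of the proof---the $\mathrm{GL}(n,\R)$-reduction, the definition of $L$ and $a_k$, and the passage to the neighborhood $|P-\bar P|<r_0$---is routine linear algebra and continuity, and the value of $r_0$ is dictated solely by the smallest of the positive coefficients $b_k(\bar P)$ and the operator norm of $L^{-1}$.
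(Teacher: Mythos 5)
Your framework---fix unit vectors whose outer products form a basis of $\mathrm{Sym}(n)$, read off the linear (hence smooth) coordinates $b_k(P)$, set $a_k=\sqrt{b_k}$, and choose $r_0$ by continuity---is the standard route (the paper does not prove the lemma but cites \cite{CDS12,DIS}, where the argument is of this type), and your $n=2$ base case is complete. The gap sits exactly where you flagged it: for $n\geq 3$ you never actually exhibit $n_*$ unit vectors $\mu_k$ with $\{\mu_k\otimes\mu_k\}$ linearly independent and $I=\sum_k c_k\,\mu_k\otimes\mu_k$ with all $c_k>0$. Carath\'eodory for cones does not supply this: it gives a representation by \emph{at most} $n_*$ linearly independent rank-ones, and for $I$ the spectral decomposition $I=\sum_{i=1}^n e_i\otimes e_i$ is already such a reduced representation with only $n<n_*$ terms. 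You cannot perturb your way up to $n_*$ terms: adjoining $n_*-n$ further vectors forces their coefficients to be zero, and a generic perturbation of the configuration restores linear independence but says nothing about the sign of the resulting coordinates $L_\mu^{-1}(I)$ (within the natural family $\mu_{ij}=\cos\theta\,e_i+\sin\theta\,e_j$ the off-diagonal coefficients vanish identically, as your own remark about $\{(e_i+e_j)/\sqrt2\}$ shows); moreover $L_\mu^{-1}$ blows up near degenerate configurations, so positivity is not stable under perturbation from one.

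The gap is closable by rearranging your own $\mathrm{GL}(n)$ step: do not transport $\bar P$ to $I$, transport a trivially decomposable matrix to $\bar P$. Take any $\mu_1,\dots,\mu_{n_*}\in\mathbb{S}^{n-1}$ with $\{\mu_k\otimes\mu_k\}$ a basis of $\mathrm{Sym}(n)$ (e.g.\ $\{e_i\}\cup\{(e_i+e_j)/\sqrt2\}$) and set $Q=\sum_k\mu_k\otimes\mu_k$, which is positive definite and has all coefficients equal to $1$. Since $M\mapsto AMA^{T}$ acts transitively on positive definite matrices, write $\bar P=AQA^{T}$ with $A=\bar P^{1/2}Q^{-1/2}$; then $\bar P=\sum_k|A\mu_k|^2\,\nu_k\otimes\nu_k$ with $\nu_k=A\mu_k/|A\mu_k|$, strictly positive coefficients, and linearly independent outer products---exactly the conclusion of your transport step, now applied in the useful direction. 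With this base decomposition the remainder of your proof goes through verbatim.
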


\noindent For our purposes we need to perturb Lemma \ref{l:deco} in two ways: first of all we want to vary the ``reference" matrix $\bar P$ slightly and allow a matrixfield $P_0$ with suitably small oscillation; this simply follows from a compactness argument. Secondly, we can perturb the coefficients $a_k$ to obtain a slightly subtler decomposition. This is similar to the decomposition used in \cite{K78} and can proved with the standard implicit function theorem (compare Proposition 5.4 in \cite{DIS20}).

\begin{lemma}\label{l:perturb}
Let $n\geq 2$ and  $\gamma \geq 1$. There exists a constant $\sigma_0(\gamma)>0$ and vectors $\nu_1,\ldots, \nu_{n*} \in \mathbb S^{n-1}$ with the following property.
If $P_0:\bar \Omega \to \R^{n\times n}$  is a matrix field with $\gamma^{-1}\mathrm{Id}\leq P_0\leq \gamma \mathrm{Id}$ and $\mathrm{osc}_\Omega P_0 <\sigma_0$, and if $P:\bar\Omega\to\R^{n\times n}_{sym}$, and $\{\Lambda_i\}_{i=1}^{n_*},$ $\{\Theta_{ij}\}_{i, j=1}^{n_*}\subset C^1(\bar\Omega, \R^{n\times n}_{sym})$ are such that
 \begin{equation}\label{e:peturb-mn}
 \|P-P_0\|_0+\sum_{ i=1}^{n_*}\|\Lambda_i\|_0+\sum_{i, j=1}^{n_*}\|\Theta_{ij}\|_0< \sigma_0,
 \end{equation}
 then there exist $C^1$ functions $a_1, \cdots, a_{n_*}:\bar\Omega\to\R$ with
\begin{equation}\label{e:decomp-p}
P=\sum_{i=1}^{n_*}a_i^2\nu_i\otimes\nu_i+\sum_{i=1}^{n_*}a_i\Lambda_i+\sum_{i, j=1}^{n_*}a_ia_j\Theta_{ij}.
\end{equation}
Moreover, $a_i$ are given as
\begin{equation}\label{e:a_i}a_i(x) = \Phi_i(P(x), \{ \Lambda_k(x)\},\{\Theta_{kl}(x)\})\,\end{equation}
for $C^{1}$ functions $\Phi_i$, and consequently, we have the estimates
\begin{equation*}
\|a_i\|_{k}\leq C_{k}\left(\|P\|_{k}+\sum_{j=1}^{n_*}\|\Lambda_j\|_k+\sum_{j,l=1}^{n_*}\|\Theta_{jl}\|_k\right),
\end{equation*}
for $k=0, 1$ and $1\leq i\leq n_*.$ Here, the constants $C_{k}\geq 1$ depend only on $k, \sigma_0$.
\end{lemma}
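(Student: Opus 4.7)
The plan is to prove the lemma by appealing to a quantitative implicit function theorem, along the lines of Proposition 5.4 of \cite{DIS20} as the authors suggest. First I would introduce the auxiliary map
\[
F(a, P, \Lambda, \Theta) := \sum_{i=1}^{n_*} a_i^2\, \nu_i\otimes \nu_i + \sum_{i=1}^{n_*} a_i \Lambda_i + \sum_{i,j=1}^{n_*} a_i a_j \Theta_{ij} - P,
\]
defined on $\R^{n_*}\times\R^{n\times n}_{sym}\times (\R^{n\times n}_{sym})^{n_*}\times (\R^{n\times n}_{sym})^{n_*^2}$ with values in $\R^{n\times n}_{sym}$, where $\nu_1,\ldots,\nu_{n_*}\in\mathbb{S}^{n-1}$ are the vectors from Lemma \ref{l:deco}. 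A key feature of this choice is that $\{\nu_i\otimes\nu_i\}$ is a basis of the $n_*$-dimensional space $\R^{n\times n}_{sym}$. For any base point $\bar P$ in the compact set $K_\gamma := \{Q\in\R^{n\times n}_{sym}:\gamma^{-1}\mathrm{Id}\le Q\le \gamma\mathrm{Id}\}$, Lemma \ref{l:deco} provides $\bar a(\bar P) = (a_1(\bar P),\ldots,a_{n_*}(\bar P))$ with $\bar a_i(\bar P)>0$ solving $F(\bar a,\bar P,0,0)=0$.

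Next I would compute the partial derivative of $F$ in the $a$-variable at $(\bar a(\bar P),\bar P,0,0)$, which acts on $h\in\R^{n_*}$ by
\[
\partial_a F \cdot h = 2\sum_{i=1}^{n_*} \bar a_i(\bar P)\, h_i\, \nu_i\otimes\nu_i,
\]
since the $\Lambda$- and $\Theta$-terms vanish at the base point. This linear map is an isomorphism because $\bar a_i(\bar P)>0$ and $\{\nu_i\otimes\nu_i\}$ is a basis, and its inverse is bounded uniformly for $\bar P\in K_\gamma$ by the continuity of $\bar P\mapsto \bar a(\bar P)$ on the compact set $K_\gamma$. The quantitative implicit function theorem then produces a radius $\sigma_0 = \sigma_0(\gamma)>0$ and smooth maps $\Phi_i$ such that the equation $F(a,P,\Lambda,\Theta)=0$ is uniquely solved by $a = \Phi(P,\Lambda,\Theta)$ on the neighborhood where $|P-\bar P|+\sum_i|\Lambda_i|+\sum_{i,j}|\Theta_{ij}|<2\sigma_0$, for any $\bar P\in K_\gamma$; a standard covering of $K_\gamma$ yields a single $\sigma_0$ that works uniformly.

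To conclude, I would fix any $x_0\in\bar\Omega$, take $\bar P := P_0(x_0)\in K_\gamma$, and observe that the oscillation hypothesis on $P_0$ combined with \eqref{e:peturb-mn} gives
\[
|P(x)-\bar P| + \sum_i|\Lambda_i(x)| + \sum_{i,j}|\Theta_{ij}(x)| < 2\sigma_0
\]
for every $x\in\bar\Omega$. Defining $a_i(x):=\Phi_i(P(x),\{\Lambda_k(x)\},\{\Theta_{kl}(x)\})$ as in \eqref{e:a_i} yields \eqref{e:decomp-p} by construction. The $C^k$ bounds for $k=0,1$ then follow by plugging the $C^1$ arguments into the composition estimates of Lemma \ref{l:composition}, noting that derivatives of $\Phi_i$ are controlled in terms of $\sigma_0$ and $\gamma$ alone.

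The main technical step is the uniform quantitative control in the implicit function theorem: one needs that the neighborhood size produced at each $\bar P$ is bounded below uniformly on $K_\gamma$, which is precisely what forces the final constant $\sigma_0$ to depend only on $\gamma$. Once this uniformity is in hand, the decomposition \eqref{e:decomp-p} and the estimates are essentially automatic consequences of the smoothness of $\Phi_i$.
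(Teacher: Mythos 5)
Your overall strategy --- introducing $F(a,P,\Lambda,\Theta)$, observing that at the base point $\partial_a F$ acts as $h\mapsto 2\sum_i \bar a_i h_i\,\nu_i\otimes\nu_i$ and is therefore invertible, and running a quantitative implicit function theorem uniformly over the compact set $K_\gamma=\{\gamma^{-1}\mathrm{Id}\leq \bar P\leq\gamma\mathrm{Id}\}$ --- is exactly the argument the paper has in mind (it only gestures at the implicit function theorem and Proposition 5.4 of \cite{DIS20}). The step that does not survive scrutiny is the claim that a \emph{single} set of vectors $\nu_1,\ldots,\nu_{n_*}$ ``from Lemma \ref{l:deco}'' yields, for \emph{every} $\bar P\in K_\gamma$, a solution of $F(\bar a,\bar P,0,0)=0$ with $\bar a_i(\bar P)>0$. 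Lemma \ref{l:deco} produces vectors \emph{depending on} $\bar P$, with a decomposition valid only in the ball $|P-\bar P|<r_0(\bar P)$. For a fixed linearly independent family $\{\nu_i\otimes\nu_i\}$, the matrices admitting a decomposition with positive coefficients form the open simplicial cone generated by these rank-one matrices, which is a proper subcone of the positive definite cone: if $\mu\in\mathbb S^{n-1}$ is different from all $\pm\nu_i$, then $\mu\otimes\mu$ is an extreme ray of the PSD cone not spanned by any $\nu_i\otimes\nu_i$, so matrices near it (which lie in $K_\gamma$ for $\gamma$ large after adding a small multiple of the identity) are outside the cone. At such $\bar P$ the base-point solvability fails, and so does the invertibility of $\partial_aF$ (which needs all $\bar a_i\neq 0$). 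A concrete instance for $n=2$: with $\nu_1=e_1$, $\nu_2=e_2$, $\nu_3=(e_1+e_2)/\sqrt2$, the coefficients of $\begin{pmatrix}p&r\\ r&q\end{pmatrix}$ are $p-r$, $q-r$, $2r$, so every diagonal matrix already sits on the boundary of the cone.

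The repair is the ``compactness argument'' the paper alludes to just before the lemma: cover $K_\gamma$ by finitely many balls $B_{r_0(\bar P_j)/2}(\bar P_j)$, attach to each ball its own vectors $\nu_k^{(j)}$ and base solution from Lemma \ref{l:deco}, take $\sigma_0$ below the minimum of the $r_0(\bar P_j)/4$ and of the implicit-function-theorem radii, and use the oscillation hypothesis on $P_0$ together with \eqref{e:peturb-mn} to confine the entire range of $P$, $\Lambda$, $\Theta$ to a neighborhood of a single $\bar P_j$; then run your argument with the vectors $\nu_k^{(j)}$. The price is that the vectors in the conclusion depend on (the range of) $P_0$, which is consistent with how the lemma is consumed in Proposition \ref{p:stage}, where $P_0=G$ is the fixed chart metric. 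With this modification the remainder of your proposal --- the $2\sigma_0$ bound via the triangle inequality, the uniform positive lower bound on the base coefficients on each closed half-ball, and the $C^0$, $C^1$ estimates via the chain rule and Lemma \ref{l:composition} --- goes through.
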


\subsection{Existence of normal vectorfields} Another key ingredient in the iteration process is the use of suitable normal vector fields to the embedding. The following lemma concerns their existence. It is similar to Proposition 5.3 in \cite{DIS20}, except that no $C^{1}$-closeness to a reference embedding is required. A proof is contained in the appendix.
\begin{lemma}\label{l:normal}
Let $N\in\N$ and $\Omega\subset\R^n$ open and simply connected. Assume $v\in C^{N+1}(\bar\Omega,\R^{m})$ is such that
\begin{equation} \label{e:vispositivedefinite}\gamma^{-1}\textrm{Id}\leq\nabla v^T\nabla v\leq \gamma \textrm{Id}\end{equation}
for some $\gamma>1.$ There exists a family of vectorfields $\{\zeta_1,\cdots,\zeta_{m-n}\}\subset C^{N}(\bar\Omega, \R^m)$ such that
\begin{align}
&\langle\zeta_i,\zeta_j\rangle=\delta_{ij},\,\, \nabla v\cdot\zeta_i=0,\nonumber\\
&[\zeta_i]_{l}\leq C(l,\gamma)(1+[v]_{l+1}),\label{e:normalestimates}
\end{align}
for all $0\leq l\leq N.$ Here $\delta_{ij}=1$ when $i=j$ and vanishes else.
\end{lemma}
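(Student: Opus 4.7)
The plan is first to write down an explicit matrix field $Q(x)$ giving the orthogonal projection onto the normal space, then to extract an orthonormal frame from $Q$. The H\"older estimates propagate directly from $v$ to $Q$; the delicate point is to produce an \emph{orthonormal} frame globally on $\Omega$, which is where simple connectedness comes into play.

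\textbf{Projection and estimates.} Set $g = \nabla v^T\nabla v$, an $n\times n$ positive-definite matrix field satisfying $\gamma^{-1}\mathrm{Id}\leq g \leq \gamma\, \mathrm{Id}$ by \eqref{e:vispositivedefinite}. Cramer's rule makes $g^{-1}$ a smooth function of the entries of $g$ on the cone of such matrices, so Lemma \ref{l:composition} together with the Leibniz rule \eqref{e:interpolation} yields
\[
\|g^{-1}\|_0 \leq C(\gamma), \qquad [g^{-1}]_l \leq C(l,\gamma)(1+[v]_{l+1}),\quad 0\leq l\leq N.
\]
Now define $Q = \mathrm{Id}_{\mathbf R^m} - \nabla v\, g^{-1}\, \nabla v^T$. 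Then for every $x$, $Q(x)$ is the orthogonal projection of $\mathbf R^m$ onto $N_x := (\nabla v(x)\,\mathbf R^n)^\perp$, and the same estimates pass to $Q$.

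\textbf{Local frames via Gram--Schmidt.} Since $\mathrm{rank}\, Q(x) = m-n$ at every point, at each $x_0\in\bar\Omega$ some subset of $m-n$ of the vectors $Q(x_0)e_1,\dots,Q(x_0)e_m$ is linearly independent, and by continuity the same tuple remains linearly independent in a neighborhood $U_{x_0}$. By compactness we cover $\bar\Omega$ by finitely many such open sets $U_\alpha$, each with a fixed index set $I_\alpha$ of size $m-n$. Applying Gram--Schmidt to $\{Q\, e_i:\, i\in I_\alpha\}$ on $U_\alpha$ gives local $C^N$ orthonormal frames $\{\eta_1^\alpha,\dots,\eta_{m-n}^\alpha\}$ of $N$; the smooth dependence of Gram--Schmidt on its input, combined with the estimates on $Q$ and Lemma \ref{l:composition}, yields $[\eta_k^\alpha]_l\leq C(l,\gamma)(1+[v]_{l+1})$.

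\textbf{Global frame.} On overlaps $U_\alpha\cap U_\beta$ the local frames differ by an $O(m-n)$-valued $C^N$ transition matrix $h_{\alpha\beta}$. The normal bundle $N$ is stably trivial: the splitting $\Omega\times\mathbf R^m = \nabla v(\Omega\times\mathbf R^n)\oplus N$ pairs $N$ with the trivial tangent summand. Because $\Omega$ is open in $\mathbf R^n$ and simply connected, the resulting vanishing of the low-degree obstructions (starting with $w_1(N)=0$) promotes stable triviality of $N$ to actual triviality of its $O(m-n)$-frame bundle in the regimes relevant here. Hence the cocycle $\{h_{\alpha\beta}\}$ splits as $h_{\alpha\beta}=h_\alpha h_\beta^{-1}$ for $C^N$ maps $h_\alpha:U_\alpha\to O(m-n)$, and the rotated frames
\[
\zeta_i(x) = \sum_{k=1}^{m-n}(h_\alpha(x))_{ik}\,\eta_k^\alpha(x),\qquad x\in U_\alpha,
\]
glue to a global $C^N$ orthonormal frame of $N$. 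The estimates \eqref{e:normalestimates} transfer from $Q$ through Gram--Schmidt and the rotations $h_\alpha$.

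\textbf{Main obstacle.} The genuinely nontrivial step is the globalization: producing the trivialization $\{h_\alpha\}$ of the overlap cocycle with matching H\"older regularity. The pointwise projection and local Gram--Schmidt are routine, but orthonormality must be maintained under the gluing, which is precisely where the simple-connectedness hypothesis on $\Omega$ (together with the stable triviality of $N$) enters to kill the relevant obstructions. Once the global frame is in place, the bounds \eqref{e:normalestimates} follow from those on $Q$.
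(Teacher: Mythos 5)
Your local construction (the projection $Q=\mathrm{Id}-\nabla v\,g^{-1}\nabla v^{T}$ and local frames obtained by Gram--Schmidt from a well-chosen subset of the $Qe_i$) is sound and close in spirit to the first step of the paper's proof. The gap is in the globalization. First, the topological claim is not correct as stated: simple connectedness of $\Omega$ only kills the degree-one obstruction ($w_1$, i.e.\ orientability of the normal bundle); it does not in general promote stable triviality to triviality. For example, a simply connected open shell around $\mathbb{S}^{2}$ in $\R^{3}$ carries nontrivial stably trivial rank-two bundles (pull back $T\mathbb{S}^{2}$), so the phrase ``the resulting vanishing of the low-degree obstructions \dots in the regimes relevant here'' is asserting precisely what needs to be proved; one would need either $m-n$ large compared to the dimension of $\Omega$ (true in the paper's application, where $m-n=2n_*$, but not assumed in the lemma) or a stronger hypothesis on $\Omega$. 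Second, and more importantly for how the lemma is used, even where the frame bundle is topologically trivial you provide no mechanism for producing the splitting $h_{\alpha\beta}=h_\alpha h_\beta^{-1}$ with the quantitative bounds $[h_\alpha]_l\leq C(l,\gamma)(1+[v]_{l+1})$: abstract obstruction theory yields continuous trivializations with no control in terms of $\gamma$ and $[v]_{l+1}$, and your cover $\{U_\alpha\}$ itself depends on $v$. Since \eqref{e:normalestimates} with constants depending only on $l,\gamma$ is the entire content of the lemma (it is fed into the iteration), this is a genuine missing step rather than a routine verification.

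For comparison, the paper sidesteps both issues: it assumes without loss of generality that $\Omega=B_1(0)$ (which suffices for the application, where $\Omega$ is a chart), constructs the frame on a small ball $\bar B_\varepsilon$ essentially as you do locally, and then runs a quantitative continuation argument. If the frame exists on $\bar B_\rho$ with the stated bounds, a Whitney-type extension (proved with explicit H\"older estimates in the appendix) extends it to $\bar B_{\rho+\delta}$ as an approximate frame, which is corrected by re-projecting onto the normal space and re-running Gram--Schmidt; the increment $\delta=\delta(\gamma,v)>0$ is uniform, so finitely many steps reach $\bar B_1$ with constants depending only on $l$ and $\gamma$. To repair your argument you would need to replace the obstruction-theoretic gluing by such an explicit propagation (or restrict to star-shaped domains and transport the frame radially), tracking the estimates at each step.
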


%%%%%%%%%%%%%%%%%
\section{Iteration proposition}\label{s:iteration}
%%%%%%%%%%%%%%%%%%
The isometric embedding $u\in \mathcal I^{\theta}_m(\Sigma_\epsilon^{+})$ in Theorem \ref{t:rigidity-flexibility} (2) will be constructed by an iteration procedure producing a sequence of embeddings $u_q$ converging to $u$. The practice, as pioneered by Nash in \cite{Nash54}, of decomposing the metric error $g-u_q^{\sharp}e$ and adding a Nash-twist for each term in the decomposition was improved by A. K\"allen in \cite{K78}. In the latter paper, the author gains extra regularity (at the expense of increased codimension compared to Nash's result) by absorbing the leading error terms into the decomposition. We use a similar decomposition (see \eqref{e:decomp-p}). However, since we employ the framework of \cite{CS20} we need to be able to "add" metric pieces which have the form $\rho^{2}(g+h)$ for compactly supported functions $\rho$ and suitably small $(0,2)$-tensors $h$. The missing lower bound on $\rho$ seems however not to be compatible with the decomposition lemma; a technical difficulty which is overcome by introducing an extra cut-off scale (see \eqref{d:epsilon} and \eqref{e:cutoffscale}).

In this proposition $G$ is assumed to be the coordinate expression of a given $C^1$ metric in some chart which is identified with an open bounded subset $\Omega\subset\R^n$. Moreover, the constant $\sigma_0$ is given by Lemma  \ref{l:perturb}.

\begin{proposition}\label{p:stage}
Fix $\gamma\geq 1$ and parameters $0<\delta<1$ and $\lambda >1$. Assume $G$ is a $C^1$ metric on $\bar\Omega\subset\R^n$ with ${\gamma}^{-1}\textrm{Id}\leq G\leq \gamma\textrm{Id}$, $\|G\|_1\leq \gamma$ and $\textrm{osc}_{\Omega }G<\sigma_0$, and $u\in C^2(\bar \Omega, \R^{n+2n_*})$, $\rho\in C^1(\bar \Omega)$ and $H\in C^1(\bar\Omega;\R^{n\times n}_{sym})$ are such that
\begin{equation}\label{e:u-assumption}
\begin{split}
{\gamma}^{-1}\textrm{Id}\leq \nabla u^T&\nabla u\leq\gamma \textrm{Id}\,\textrm{ in }\Omega,\\
\|u\|_{2}&\leq \delta^{1/2}\lambda,
\end{split}
\end{equation}
and
\begin{align}
&0\leq \rho\leq \delta^{1/2}, \quad \|\rho\|_1\leq \delta^{1/2}\lambda, \label{e:rho-assumption}\\
&\|H\|_0\leq \frac{\sigma_0}{2}, \quad \|H\|_1\leq \lambda\,.\label{e:H-assumption}
\end{align}
 Then, for every $\tau>1$ there exists a constant $\lambda_0(\tau, \gamma,\sigma_0)\geq 1$  such that if
\begin{equation}\label{e:stageparameters}
 \lambda \geq \lambda_0\,,
\end{equation} then there exists an embedding $v\in C^2(\bar \Omega; \R^{n+2n_*})$ and $\mathcal{E}\in C^1(\bar \Omega;\R^{n\times n}_{sym})$ such that
\begin{equation}\label{e:stage-support}
\begin{split}
 \nabla v^T\nabla v=&\nabla u^T\nabla u+\rho^2(G+H)+\mathcal{E}\quad\textrm{ in }\Omega,\\
v=&u\textrm{ on }\Omega\setminus(\supp\rho +B_{\lambda^{1-2\tau}}(0))
\end{split}
\end{equation}
with estimates
\begin{align}
\|v-u\|_0&\leq C\delta^{1/2}\lambda^{-\tau},\label{e:stage-2}\\
\|v-u\|_1&\leq C\delta^{1/2},\label{e:stage-3}\\
\|v\|_2&\leq C\delta^{1/2}\lambda^{\tau},\label{e:stage-4}
\end{align}
and
\begin{equation}\label{e:stage-5}
\|\mathcal{E}\|_0\leq C\delta\lambda^{2-2\tau}, \quad
\|\mathcal{E}\|_1\leq C\delta\lambda.
\end{equation}
Here, $C\geq 1$ is a constant depending only on $\gamma,\sigma_0$.
\end{proposition}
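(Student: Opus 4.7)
The plan is to perform a single Nash--K\"all\'en convex integration stage consisting of $n_*$ successive Nash spirals (one for each direction in the metric decomposition), each using a pair of orthogonal normal vectorfields. The two-normals-per-spiral structure is what forces the codimension $2n_*$, and the K\"all\'en refinement of the decomposition (Lemma \ref{l:perturb}) is what enables the H\"older exponent to approach $1/2$. The extra technical wrinkle, relative to \cite{CS20}, is that $\rho$ may vanish, so the matrix field to be decomposed is \emph{not} uniformly bounded below; this will be handled by an additional spatial cut-off.

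First I would mollify: set $\tilde u = u \ast \varphi_\ell$, $\tilde \rho = \rho \ast \varphi_\ell$, $\tilde H = H \ast \varphi_\ell$, $\tilde G = G \ast \varphi_\ell$ at a length scale $\ell$ chosen of order $\lambda^{-\tau}$ (to be optimized against the oscillation frequency). Standard estimates from Lemma \ref{l:mollification} give the required bounds on derivatives of the mollified objects together with a commutator estimate
$\|\nabla u^T\nabla u - \nabla \tilde u^T\nabla \tilde u\|_0 \lesssim \ell^{2}\|u\|_{2}^{2}$ controlling the ``gluing error''. Next, introduce the cut-off scale $\epsilon$ (the one foreshadowed in \eqref{d:epsilon}): pick a smooth $\chi$ equal to $1$ on $\{\tilde\rho \geq 2\epsilon\}$ and vanishing on $\{\tilde\rho \leq \epsilon\}$, with $\epsilon$ of order $\lambda^{1-\tau}$ so that $\|\chi\|_1 \lesssim \epsilon^{-1}\|\tilde\rho\|_1 \lesssim \lambda^{\tau}$ matches the target $C^2$-norm of $v$. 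Outside $\supp \chi$ we will leave $u$ unchanged; this is what guarantees the support property in \eqref{e:stage-support}, since $\supp\chi \subset \supp\rho + B_\ell(0)$ and $\ell \ll \lambda^{1-2\tau}$.

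On $\{\chi > 0\}$, where $\tilde\rho \geq \epsilon$, apply Lemma \ref{l:perturb} to the matrix field
\[
P \;=\; \tilde G + \tilde H + \tilde\rho^{-2}\bigl(\nabla u^T\nabla u - \nabla \tilde u^T\nabla \tilde u\bigr)\chi,
\]
with reference $P_{0}= G$ (legitimate thanks to $\mathrm{osc}_\Omega G < \sigma_0$ and smallness of $\|H\|_0$), and with the correction tensors $\Lambda_i, \Theta_{ij}$ \emph{chosen to be precisely the leading-order quadratic and linear error tensors that the $i$-th Nash spiral will produce}. Lemma \ref{l:perturb} then yields smooth amplitudes $b_i$; set $a_i = \tilde\rho\, b_i$, so that each $a_i$ is supported in $\supp\chi$ and satisfies $\|a_i\|_0 \lesssim \delta^{1/2}$, $\|a_i\|_1 \lesssim \delta^{1/2}\lambda$ (the second estimate is the delicate one, as $\epsilon$ enters through $\|\chi\|_1$ and through $\|b_i\|_1$; it is here that the choice $\epsilon \sim \lambda^{1-\tau}$ pays off). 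Then iterate: $v_0 = \tilde u$ and
\[
v_k \;=\; v_{k-1} + \tfrac{a_k}{\lambda^{\tau}}\Bigl(\sin(\lambda^{\tau}\nu_k\!\cdot\! x)\,\zeta_k^{(1)} + \bigl(1-\cos(\lambda^{\tau}\nu_k\!\cdot\! x)\bigr)\zeta_k^{(2)}\Bigr),
\]
with $\zeta_k^{(1)},\zeta_k^{(2)}$ an orthonormal pair of normals to $v_{k-1}$ supplied by Lemma \ref{l:normal}; finally $v = v_{n_*}$. Expanding $\nabla v_k^T\nabla v_k$ produces the desired term $a_k^2 \nu_k\otimes\nu_k$ plus (a) oscillatory terms of mean zero, (b) linear error in $\nabla a_k$ contracted with $\zeta_k^{(j)}$, and (c) quadratic error from $d\zeta_k^{(j)}$. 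The telescoped sum of (b) and (c) is exactly cancelled by the $a_k \Lambda_k$ and $a_k a_j \Theta_{jk}$ pieces in \eqref{e:decomp-p} — this is the K\"all\'en gain — while (a) is absorbed into $\mathcal{E}$ by an integration-by-parts (stationary-phase) argument of the standard form, picking up a factor $\lambda^{-\tau}$ per integration.

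The hard part will be the bookkeeping on $\mathcal{E}$: one must show that after the cancellations, the residual error from (a) satisfies $\|\mathcal{E}\|_0 \lesssim \delta\lambda^{2-2\tau}$ and $\|\mathcal{E}\|_1 \lesssim \delta\lambda$, and that none of the $\chi$-derivatives, $\epsilon^{-1}$ factors, or mollification commutators reintroduced at each of the $n_*$ spirals blow these bounds. The non-trivial new piece compared to \cite{CS20,DIS20} is precisely keeping $\chi$ compatible with the decomposition: because $\chi$ is \emph{not} of the form ``$1$ on $\supp \rho$'' one has to verify that $(1-\chi)\tilde\rho^{2}(\tilde G + \tilde H)$ — the metric we \emph{fail} to add — is of the order $\epsilon \tilde\rho \lesssim \delta\lambda^{2-2\tau}$ in $C^0$, matching the first estimate of \eqref{e:stage-5}, and similarly in $C^{1}$. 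Once $\epsilon$ is calibrated so that this matches, and $\ell$ is calibrated so that the mollification errors are of the same order, all the remaining estimates \eqref{e:stage-2}--\eqref{e:stage-4} are straightforward consequences of the explicit form of the spirals and the estimates on the $a_i$'s and $\zeta_k^{(j)}$'s.
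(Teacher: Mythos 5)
Your overall architecture (mollification, K\"all\'en-type decomposition via Lemma \ref{l:perturb} with the spiral's own error tensors fed in as $\Lambda_i,\Theta_{ij}$, an extra cut-off scale to handle the degeneracy of $\rho$, spirals built from pairs of normals) is the right one and matches the paper. But three of your specific choices fail quantitatively. First, the term $\tilde\rho^{-2}\bigl(\nabla u^{T}\nabla u-\nabla\tilde u^{T}\nabla\tilde u\bigr)\chi$ cannot be put into $P$: the difference $\nabla u^{T}\nabla u-\nabla\tilde u^{T}\nabla\tilde u$ is of size $\ell\,\|u\|_1\|u\|_2\sim\delta^{1/2}\lambda^{1-\tau}$ (it is \emph{linear} in $\ell$ — the quadratic bound $\ell^{2}\|u\|_2^{2}$ you quote applies to the commutator $(fg)\ast\varphi_\ell-(f\ast\varphi_\ell)(g\ast\varphi_\ell)$, not to $fg-(f\ast\varphi_\ell)(g\ast\varphi_\ell)$), and dividing by $\tilde\rho^{2}\geq\epsilon^{2}\sim\delta\lambda^{2-2\tau}$ yields $\delta^{-1/2}\lambda^{\tau-1}\to\infty$, destroying the smallness hypothesis \eqref{e:peturb-mn}. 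The paper avoids this by perturbing $u$ itself (not $\tilde u$) and treating $(\nabla u-\nabla\tilde u)^{T}A_k$ as a \emph{linear} error, i.e.\ as part of $\Lambda_k$, so it only needs to be small after one division by $\rho$ (implemented as the truncated reciprocal $\psi(\rho)$), not two. Relatedly, taking $v_0=\tilde u$ breaks $v=u$ off $\supp\rho$, and your scale ordering is backwards: since $\tau>1$ one has $\lambda^{-\tau}>\lambda^{1-2\tau}$, so mollifying $\rho$ at scale $\ell\sim\lambda^{-\tau}$ enlarges the support by \emph{more} than the allowance in \eqref{e:stage-support}; the paper mollifies only the amplitudes $b_k=\rho a_k$, at the finer scale $\lambda^{1-2\tau}$.

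Second, the hard spatial cut-off $\chi$ is incompatible with the cancellation you are relying on. If the spiral amplitude is $a_i=\chi\tilde\rho\,b_i$, then for the linear error $a_i\Lambda_i$ to be cancelled by the decomposition one must feed $(\chi\tilde\rho)^{-1}\Lambda_i$ into Lemma \ref{l:perturb}, which is unbounded where $\chi\to0$ but $\tilde\rho\geq\epsilon$; if instead $a_i=\tilde\rho\,b_i$ with $\Lambda_i$ fed in unweighted, the decomposition produces $\tilde\rho\, a_i\Lambda_i$, which does not match $a_i\Lambda_i$. The paper's resolution is to replace $1/\rho$ by the smooth truncation $\psi(\rho)$ (capped at $\epsilon^{-1/2}$), feed $\psi(\rho)\Lambda_k$ into the lemma, and accept a mismatch $b_k(1-\rho\psi(\rho))\Lambda_k$ supported where $\rho<2\epsilon^{1/2}$, where it is controlled because $b_k=\rho a_k\lesssim\epsilon^{1/2}$ there; no spatial cut-off is needed. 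Third, your serial iteration with $\zeta_k^{(1)},\zeta_k^{(2)}$ normal to $v_{k-1}$ is circular: the tensors $\Lambda_i,\Theta_{ij}$ must be known \emph{before} Lemma \ref{l:perturb} is solved for the amplitudes, but with serial normals they depend on $a_1,\dots,a_{k-1}$. The paper takes all $2n_*$ normals with respect to the single fixed map $\tilde u$ and adds all spirals in parallel — which is also the only reading consistent with your own (correct) remark that the pair-of-normals structure is what forces codimension $2n_*$.
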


\begin{remark}[Constants] As usual, the value of the constants $C$ appearing in the following proof can change from line to line. In addition, all the constants are allowed to depend on $\gamma$ and $\sigma_0$. For the sake of readibility, we will suppress this dependence in the notation.
\end{remark}
%\begin{remark}[Regularization] \label{r:regularization} In the course of the proof we will regularize maps $f\in C^{k}(\bar \Omega,\R^{m})$ defined on $\bar \Omega$ by mollification. The resulting  regularized maps will then have a smaller domain of definition. To counteract this, we first extend $f$ to a map $\bar f\in C^{k}\left (\R^{n},\R^{m}\right )$ such that
%\[ \|f\|_{C^{k}(\R^{m})} \leq C\|f\|_{C^{k}(\bar \Omega)}\,,\]
%where the constant $C>0$ depends only on $k,n$ and $\Omega$. Such a procedure is wellknown. We then mollify the resulting extensions at some length-scale $\ell>0$ to obtain $\tilde f= \bar f\ast \varphi_\ell \in C^{\infty}(\bar \Omega,\R^{m})$. We will not further specify this in the proof.
%\end{remark}
\begin{proof}
Fix $\tau> 1$. Regularize  $u$ at length scale $\lambda^{-\tau}$  to get $\tilde u\in C^{\infty}(\bar \Omega)$.
Then the smooth embedding $\tilde u$ satisfies
$$
\|u-\tilde u\|_1\leq C\delta^{1/2}\lambda^{1-\tau},\quad \|\tilde u\|_2\leq C\delta^{1/2}\lambda,
\quad \|\tilde u\|_3\leq C\delta^{1/2}\lambda^{\tau+1}.
$$
Note that
$$
\nabla\tilde u^T\nabla \tilde u=\nabla u^T\nabla u-(\nabla u-\nabla\tilde u)^T\nabla\tilde u
-\nabla u^T(\nabla u-\nabla\tilde u),
$$
which then implies
$$
(2\gamma)^{-1}\textrm{Id}\leq\nabla\tilde u^T\nabla\tilde u\leq (2\gamma)\textrm{Id}.
$$
provided $\lambda^{1-\tau} \leq C(\gamma)^{-1}$ for some constant $C(\gamma)$, which follows from \eqref{e:stageparameters} for $\lambda_0$ large enough.  Then $\tilde u:\bar \Omega\hookrightarrow\R^{n+2n^*} $ is an embedding of $\bar \Omega.$ Thus by Lemma \ref{l:normal}, there exist
 $2n_*$ unit normal vectors $\{\zeta_k, \eta_k, k=1,\cdots, n_*\}$ to the surface $\tilde u(\bar\Omega)$ satisfying the estimates \eqref{e:normalestimates}. Fix moreover the vectors $\nu_1,\ldots, \nu_{n*}\in \mathbb S^{n-1}$ provided by Lemma \ref{l:perturb} for our fixed $\gamma\geq 1$.
Similarly to \cite{DIS20}, we then  define
\begin{align*}
A_k=&\cos(\lambda^\tau\nu_k\cdot x)\zeta_k\otimes\nu_k-\sin(\lambda^\tau\nu_k\cdot x)\eta_k\otimes\nu_k,\\
B_k=&\sin(\lambda^\tau\nu_k\cdot x)\nabla\zeta_k+\cos(\lambda^\tau\nu_k\cdot x)\nabla\eta_k,\\
D_k=&\sin(\lambda^\tau\nu_k\cdot x)\zeta_k+\cos(\lambda^\tau\nu_k\cdot x)\eta_k\,.
\end{align*}
By \eqref{e:interpolation}, it is not hard to derive
\begin{equation}\label{e:a-b-d-k}
\begin{split}
\|A_k\|_0+\|D_k\|_0\leq &C(1+\|\nabla\tilde u\|_0)\leq C,\\
\|A_k\|_1+\|D_k\|_1\leq& C(\lambda^\tau\|\nabla\tilde u\|_0+\|\nabla^2\tilde u\|_0)\leq C(\lambda^\tau+\delta^{1/2}\lambda)\leq C\lambda^\tau,\\
\|B_k\|_0\leq &C\|\nabla^2\tilde u\|_0\leq C\delta^{1/2}\lambda,\\
\|B_k\|_1\leq&C(\|\nabla^2\tilde u\|_0\lambda^\tau+\|\tilde u\|_3)\leq C\delta^{1/2}\lambda^{1+\tau}.
\end{split}
\end{equation}
Note that $\nabla\tilde u^TA_k=0.$ Thus we have
\begin{equation}\label{e:uak}
\begin{split}
\|\nabla u^T A_k\|_0=&\|(\nabla u-\nabla \tilde u)^T A_k\|_0\leq C\delta^{1/2}\lambda^{1-\tau}.\\
\|\nabla u^TA_k\|_1\leq& C(\|\nabla\tilde u\|_1\|A_k\|_0+\|\nabla u-\nabla\tilde u\|_0\|A_k\|_1)\\
\leq &C(\delta^{1/2}\lambda+\delta^{1/2}\lambda^{1-\tau}\lambda^\tau)\leq C\delta^{1/2}\lambda.
\end{split}
\end{equation}
Clearly, we have the same estimates for  $\nabla u^T D_k$:
\begin{equation}\label{e:udk} \|\nabla u^T D_k\|_0 \leq C \delta^{1/2}\lambda^{1-\tau}\,, \|\nabla u^{T}D_k\|_1 \leq C\delta^{1/2} \lambda\,.\end{equation}
 We now set
 \begin{align*}
 \Lambda_k=&2\,\textrm{sym}(\nabla u^T A_k)+2\lambda^{-\tau}\textrm{sym}(\nabla u^TB_k)\,,\\
 \Theta_{ij}=&2\lambda^{-\tau}\textrm{sym}(A_i^TB_j)+2\lambda^{-2\tau}\textrm{sym}(B_i^TB_j).
 \end{align*}
  With the help of \eqref{e:a-b-d-k}--\eqref{e:udk} we then deduce
\begin{equation}\label{e:LT-k-ij}
\begin{split}
\|\Lambda_k\|_0\leq& C(\|\nabla u^TA_k\|_0+\lambda^{-\tau}\|\nabla u\|_0\|B_k\|_0)
\leq C\delta^{1/2}\lambda^{1-\tau}\,,\\
\|\Lambda_k\|_1\leq& C\left (\|\nabla u^TA_k\|_1+\lambda^{-\tau}\left (\|\nabla u\|_1\|B_k\|_0+\|\nabla u\|_0\|B_k\|_1\right ) \right )\leq C\delta^{1/2}\lambda\,,\\
\|\Theta_{ij}\|_0\leq &C\lambda^{-\tau}(\|A_i\|_0\|B_j\|_0+\lambda^{-\tau}\|B_i\|_0\|B_j\|_0)\leq C(\gamma)\delta^{1/2}\lambda^{1-\tau},\\
\|\Theta_{ij}\|_1\leq &C\lambda^{-\tau}(\|A_i\|_1\|B_j\|_0+\|A_i\|_0\|B_j\|_1+\lambda^{-\tau}(\|B_i\|_1\|B_j\|_0+\|B_i\|_0\|B_j\|_1))\\
\leq& C(\delta^{1/2}\lambda+\delta\lambda^{2-\tau})\leq C\delta^{1/2}\lambda.
\end{split}
\end{equation}
Fix now a parameter  $\epsilon >0$ defined by
\begin{equation}\label{d:epsilon}\epsilon^{1/2}= C_0(\gamma, \sigma_0) \delta^{1/2}\lambda^{1-\tau}\,,\end{equation}
where $C_0(\gamma,\sigma_0)\geq 1$ is a constant to be chosen later. Observe that upon choosing $\lambda_0(\gamma,\sigma_0,\tau)$ large enough we can achieve $\epsilon^{1/2}<\delta^{1/2}$.  Next, fix a  monotone decreasing function $\psi\in C^{\infty}([0,\infty[)$ such that
\begin{equation}\label{e:cutoffscale} \psi(\rho) =\begin{cases} \,\,\frac{1}{\rho} \quad \, \text{ if } \rho\geq 2\epsilon^{1/2},\\
\epsilon^{-1/2} \text{ if } \rho \leq \epsilon^{1/2}\,.\end{cases}\end{equation}
Clearly,
\[ \|\psi(\rho(\cdot))\|_0   \leq C\epsilon^{-1/2} \,,\|\psi(\rho(\cdot))\|_1 \leq C\epsilon^{-1}\delta^{1/2}\lambda\,,\]
due to the assumption \eqref{e:rho-assumption}. It therefore follows that
\begin{equation}\label{e:rholambda_k}
\begin{split}
\| \psi(\rho) \Lambda_k \|_0& \leq C \epsilon^{-1/2}\delta^{1/2} \lambda^{1-\tau}\,,\\
\|\psi(\rho) \Lambda_k\|_1 &\leq C\left (\epsilon^{-1/2}\delta^{1/2}\lambda + \epsilon^{-1}\delta \lambda^{2-\tau}\right ) \leq C\epsilon^{-1/2}\delta^{1/2}\lambda \,,
\end{split}
\end{equation}
since $C_0\geq 1$.
Thus, if $C_0$ in \eqref{d:epsilon} is chosen large enough (and afterwards $\lambda_0$ in \eqref{e:stageparameters} is large enough to guarantee $\epsilon <\delta$) we have the following bound
$$\|H\|_0+\sum_{k=1}^{n_*}\|\psi(\rho)\Lambda_k\|_0+\sum_{i, j=1}^{n_*}\|\Theta_{ij}\|_0\leq\frac{\sigma_0}{2}+C\epsilon^{-1/2}\delta^{1/2}\lambda^{1-\tau} < \sigma_0.$$
A direct application of Lemma \ref{l:perturb} (with $P_0 = G$, $P=G+H$) then enables us to get $n_*$ functions $\{a_k\}\subset C^{1}(\bar\Omega)$ such that
\begin{equation*}
G+H=\sum_{k=1}^{n_*}a_k^2\nu_k\otimes\nu_k+\sum_{k=1}^{n_*}a_k\psi(\rho)\Lambda_k+\sum_{i,j=1}^{n_*}a_ia_j\Theta_{ij}\,,
\end{equation*}
i.e.,
\begin{equation}\label{e:rho-g-h-deco}
\rho^{2}(G+H)=\sum_{k=1}^{n_*}(\rho a_k)^2\nu_k\otimes\nu_k+\sum_{k=1}^{n_*}\rho^{2}\psi(\rho) a_k \Lambda_k+\sum_{i,j=1}^{n_*}(\rho a_i) (\rho a_j)\Theta_{ij}\,.
\end{equation}
Notice that $\rho^{2}\psi(\rho) = \rho$ if $\rho \geq 2\epsilon^{1/2}$, so that, at least in this region, we get a decomposition of the form \eqref{e:decomp-p} for the degenerate metric piece $\rho^{2}(G+H)$ with coefficients $\rho a_k$.
By Lemma \ref{l:perturb} we have for $k=1, \cdots, n_*,$
\begin{equation}\label{e:ak-bound}
\begin{split}
0\leq a_k\leq& C\left(\|G\|_0+\|H\|_0+\sum_{k=1}^{n_*}\|\psi(\rho)\Lambda_k\|_0+\sum_{i,j=1}^{n_*}\|\Theta_{ij}\|_0\right)\leq C
,\\
\|a_k\|_1\leq& C\left(\|G\|_1+\|H\|_1+\sum_{k=1}^{n_*}\|\psi(\rho)\Lambda_k\|_1+\sum_{i,j=1}^{n_*}\|\Theta_{ij}\|_1\right)\leq C\epsilon^{-1/2}\delta^{1/2}\lambda.
\end{split}
\end{equation}
However, it follows from the description \eqref{e:a_i} and the estimates \eqref{e:LT-k-ij} that the following improved estimate  holds
\begin{align*}
&\|\rho \nabla a_k \|_0 \leq C\|\rho\nabla(\psi(\rho)\Lambda_k)\|_0
\leq C\left (\|\rho\psi'(\rho) \Lambda_k\nabla\rho  \|_0+\|\rho\psi(\rho)\nabla\Lambda_k\|_0\right )\\
\leq&C(\epsilon^{-1/2}\delta\lambda^{2-\tau}+\delta^{1/2}\lambda)\leq C\delta^{1/2}\lambda\,,
\end{align*}
since $|\rho\psi'(\rho)|\leq C\epsilon^{-1/2}$ and $|\rho\psi(\rho)|\leq C.$
We can then infer that by \eqref{e:interpolation}
\begin{equation}\label{e:ak-bound2}
 \| \rho a_k \|_1 \leq C(\|a_k\nabla\rho\|_0+\|\rho\nabla a_k\|_0)\leq C\delta^{1/2}\lambda \,.
\end{equation}
Now we set $b_k := \rho a_k$ and mollify $b_k$ at length scale  $\lambda^{1-2\tau}$ to get $\tilde b_k$. By \eqref{e:rho-assumption}, \eqref{e:ak-bound2} and Lemma  \ref{l:mollification}, we have for any $j\in \N$
%\begin{align*}
%\|\tilde \rho\|_0\leq \delta^{1/2}, \quad & \|\tilde a_k\|_{0}\leq C(\gamma,\sigma_0, r_0),\\
%\|\tilde \rho\|_{j+1}\leq C(j)\delta^{1/2}\lambda^{2\tau j-j+1},\quad & \|\tilde a_k\|_{j+1}\leq C_j(\gamma,\sigma_0, r_0)\lambda^{2\tau j-j+1},\\
%\|\tilde \rho-\rho\|_0\leq C\delta^{1/2}\lambda^{2-2\tau},\quad  &\|\tilde a_k-a_k\|_0\leq C(\gamma,\sigma_0, r_0)\lambda^{2-2\tau},
%\end{align*}
%for all $j\geq 0$. Hence, we infer
\begin{equation}\label{e:a-rho-c-j-bound}
\begin{split}
\| \tilde b_k \|_0 &\leq  C \delta^{1/2}\\
\|\tilde b_k \|_{j+1} &\leq  C_j \delta^{1/2}\lambda^{(2\tau-1)j +1}\\
\|\tilde b_k -b_k \|_{0} &\leq C_j \delta^{1/2}\lambda^{2-2\tau}\,.
\end{split}
\end{equation}
Finally, we define our desired embedding as
 $$
 v=u+\frac{1}{\lambda^{\tau}}\sum_{k=1}^{n_*}\tilde b_k D_k.
 $$
From the definition it is clear that $v=u$ on $\Omega\setminus \left (\supp \rho + B_{\lambda^{1-2\tau}}\right )$, i.e.,   \eqref{e:stage-support} holds. Besides,
 a straightforward calculation shows
$$\nabla v=\nabla u+\sum_{k=1}^{n_*}\tilde b_k A_k+\frac{1}{\lambda^{\tau}}\sum_{k=1}^{n_*}\tilde b_k B_k+
\frac{1}{\lambda^{\tau}}\sum_{k=1}^{n_*}D_k\nabla \tilde b_k  \,.$$
 Since $A_k^TD_k=0$ for $k=1, 2, \cdots, n_*$,  the induced metric will be
 \begin{align*}
 \nabla v^T\nabla v=&\nabla u^T\nabla u+\sum_{k=1}^{n_*}\tilde b	_k^2\nu_k\otimes\nu_k
 +2\sum_{k=1}^{n_*}\tilde b_k\textrm{sym}\left (\nabla u^{T} A_k +\frac{1}{\lambda^{\tau}}\nabla u^{T}B_k\right ) \\
  & \,+ \frac{2 }{\lambda^{\tau}}\sum_{k=1}^{n_*}\textrm{sym}\left (\nabla u^{T}D_k\nabla\tilde b_k\right )+ \frac{2 }{\lambda^{\tau}}\sum_{i,j=1}^{n_*}\tilde b_i\tilde b_j\mathrm{sym}\left (A_i^{T}B_j\right )
 \\& + \frac{2}{\lambda^{2\tau}}\sum_{i,j=1}^{n_*}\tilde b_i\tilde b_j \mathrm{sym}\left (B_i^{T}B_j\right )
+\frac{2}{\lambda^{2\tau}}\sum_{i,j=1}^{n_*}\tilde b_i \mathrm{sym}\left (B_i^{T}D_j\nabla \tilde b_j\right ) \\
& +\frac{\tilde \chi^{2}}{\lambda^{2\tau}} \sum_{k=1}^{n_*} \nabla \tilde b_k ^{T}\nabla \tilde b_k \,.
 \end{align*}
 Hence by \eqref{e:rho-g-h-deco}, we calculate the metric error
 $$\nabla v^T\nabla v-(\nabla u^T\nabla u+\rho^2(G+H))=\mathcal{E}_1+\mathcal{E}_2,$$
 with
 \begin{align*}
 \mathcal{E}_1=&\sum_{k=1}^{n_*}(\tilde b_k^{2}-b_k^{2})\nu_k\otimes\nu_k
 +\sum_{k=1}^{n_*}(\tilde b_k -\rho\psi(\rho)b_k)\Lambda_k+\sum_{i, j=1}^{n_*}(\tilde b_i\tilde b_j - b_ib_j)\Theta_{ij},\\
 \mathcal{E}_2=& \frac{2}{\lambda^{\tau}}\sum_{k=1}^{n_*}\textrm{sym}\left (\nabla u^{T}D_k\nabla\tilde b_k\right ) + \frac{2}{\lambda^{2\tau}}\sum_{i,j=1}^{n_*}\tilde b_i \mathrm{sym}\left (B_i^{T}D_j\nabla \tilde b_j\right )+\frac{1}{\lambda^{2\tau}} \sum_{k=1}^{n_*} \nabla \tilde b_k ^{T}\nabla \tilde b_k\,.
 \end{align*}
 In the following, we  shall bound the above two errors in order to get \eqref{e:stage-5}. We start with
 \begin{align*}
 \|\tilde b_k^{2}- b_k^{2}\|_0&\leq \| \tilde b_k + b_k\|_0\|\tilde b_k -b_k\|_0 \leq C \delta \lambda^{2-2\tau}\,,
 \end{align*}
 where we used \eqref{e:a-rho-c-j-bound}.  Similarly, with \eqref{e:interpolation} one can then estimate
 \[  \|\tilde b_k^{2}- b_k^{2}\|_1 \leq C \delta\lambda\,.\]
  Completely analogously one can estimate the term
  \[\|(\tilde b_i\tilde b_j - b_ib_j)\Theta_{ij}\|_0\leq  C\delta^{3/2}\lambda^{3-3\tau} \leq C \delta \lambda^{2-2\tau}\]
  and
  \[\|(\tilde b_i\tilde b_j - b_ib_j)\Theta_{ij}\|_1 \leq C\delta\lambda \,.\]
 For the second term in $\mathcal E_1$ we  write
 \[\tilde b_k - \rho\psi(\rho) b_k = \tilde b_k-b_k +b_k(1-\rho\psi(\rho) )\,\]
 and observe that by definition $1-\rho\psi(\rho) = 0$ for $\rho\geq 2\epsilon^{1/2}$. Thus, remembering that $b_k=\rho a_k$, we have that $|b_k|\leq C\epsilon^{1/2}$ on  $\mathrm{spt}(1-\rho\psi(\rho)))$.
 Hence,
  \begin{align*}
  \|\left (\tilde b_k -\rho\psi( \rho) b_k\right )\Lambda_k \|_0 &\leq C\delta^{1/2}\lambda^{1-\tau} \left (\|\tilde b_k-b_k\|_0 + \|b_k(1-\rho\psi(\rho))\|_0 \right )\\
  &\leq C\delta^{1/2}\lambda^{1-\tau}\left (\delta^{1/2}\lambda^{2-2\tau} +\epsilon^{1/2} \right ) \leq C\delta \lambda ^{2-2\tau}
  \end{align*}
  by the definition of $\epsilon$ in \eqref{d:epsilon}.
 Similarly,
 \begin{align*}\|\left (\tilde b_k - \rho\psi(\rho)b_k\right )\Lambda_k \|_1 &\leq C\delta^{3/2}\lambda^{3-2\tau} +C\delta^{1/2}\lambda^{1-\tau}(\delta^{1/2}\lambda + \epsilon^{1/2}\|\nabla(1-\rho\psi(\rho))\|_0 )\\
 &\leq C \delta\lambda^{2-\tau}\leq C\delta\lambda\,, \end{align*}
since $|\nabla(\rho\psi(\rho))|\leq C\epsilon^{-1/2}\delta^{1/2}\lambda+ C|\psi(\rho)\nabla\rho |\leq C\epsilon^{-1/2}\delta^{1/2}\lambda$.

Combining the previous estimates, we get
 \[ \|\mathcal E_1 \|_0 \leq C \delta\lambda^{2-2\tau},\, \|\mathcal E_1\|_1 \leq C\delta \lambda\,.\]
  The estimation of $\mathcal E_2$ is lengthy but straightforwardly obtained by \eqref{e:interpolation}, \eqref{e:a-b-d-k}, \eqref{e:udk}, \eqref{e:a-rho-c-j-bound} and yields
 \[ \|\mathcal E_2\|_0 \leq C\delta \lambda^{2-2\tau},\, \|\mathcal E_2\|_1 \leq C\delta \lambda\,.  \]
This in turn implies \eqref{e:stage-5}, since
\begin{equation*}%\label{e:metric-error1}
\begin{split}
&\|\nabla v^T\nabla v-(\nabla u^T\nabla u+\rho^2(G+H))\|_0\leq\|\mathcal{E}_1\|_0+\|\mathcal{E}_2\|_0 \leq C\delta\lambda^{2-2\tau}\,,\\
&\|\nabla v^T\nabla v-(\nabla u^T\nabla u+\rho^2(G+H))\|_1\leq\|\mathcal{E}_1\|_1+\|\mathcal{E}_2\|_1 \leq  C \delta\lambda\,.
\end{split}
\end{equation*}

\smallskip
It remains to show the estimates \eqref{e:stage-2}-\eqref{e:stage-4}. Clearly, by the formulae for $v$ and its derivative and the estimates \eqref{e:a-b-d-k}, \eqref{e:a-rho-c-j-bound}  we have
\begin{align*}
\|v-u\|_0\leq& \lambda^{-\tau}\sum_{k=1}^{n_*}\|\tilde b_k\|_0\|D_k\|_0\leq C\delta^{1/2}\lambda^{-\tau},
\end{align*}
and
\begin{align*}
\|v-u\|_1\leq&\sum_{k=1}^{n_*}\|\tilde b_k\|_0\| A_k\|_0+\lambda^{-\tau}\sum_{k=1}^{n_*}(\|\tilde b_k\|_0\|B_k\|_0+\|D_k\|_0\|\nabla\tilde b_k\|_0) \\
\leq &C(\delta^{1/2}+\delta^{1/2}\lambda^{1-\tau})\leq C\delta^{1/2}
\end{align*}
Thus we achieve \eqref{e:stage-2}-\eqref{e:stage-3}.
For the second derivatives, we also apply \eqref{e:a-b-d-k}, \eqref{e:a-rho-c-j-bound} and \eqref{e:interpolation} to obtain
\begin{align*}
\|v-u\|_2\leq&  C\sum_{k=1}^{n_*}\left ( \|\tilde b_k\|_0\| A_k+\lambda^{-\tau}B_k\|_1 + \|\tilde b_k\|_1\| A_k+\lambda^{-\tau}B_k\|_0 + \lambda^{-\tau}\|D_k\nabla \tilde b_k\|_1 \right ) \\
\leq & C( \delta^{1/2}\lambda^{\tau} + \delta^{1/2}\lambda + \delta^{1/2}\lambda^{\tau})
\leq C\delta^{1/2}\lambda^{\tau}.
\end{align*}
With \eqref{e:u-assumption} and the fact $\tau>1$, we arrive at \eqref{e:stage-4} and finish the proof.
\end{proof}

%%%%%%%%%

With Proposition \ref{p:stage}, we can modify the inductive Proposition 4.1 in \cite{CS20} to fit our setting, which will help us to construct  \emph{adapted short embeddings} iteratively. We recall the  definition of adapted short embeddings  from \cite{CS19, CS20}.

\begin{definition}\label{d:adapted}
Given a closed subset $\Sigma\subset \mathcal M$ and $\theta\in \,]0,1[$, an embedding  $u:\mathcal M\rightarrow\R^{m}$  is called \emph{adapted short embedding with respect to $\Sigma$ with exponent $\theta$} if
\begin{enumerate}
\item $u\in C^{1, \theta}(\mathcal M)$,
\item there exists a nonnegative function $\rho \in C(\mathcal M)$ with $\Sigma =\{\rho =0 \}$ and a symmetric $(0,2)$-tensor $h\in C(\mathcal M)$ with  $-\frac{1}{2}g \leq h \leq \frac{1}{2}g$  such that
\[ g-u^{\sharp}e = \rho^{2}( g+h)\,, \]
\item $u\in C^{2}(\mathcal M\setminus \Sigma)$, $\rho, h\in C^{1}(\mathcal M\setminus \Sigma)$ and there exists a constant $A\geq 1$ such that in any chart $\Omega_k$
\begin{align}
|\nabla^2u(x)|&\leq A\rho(x)^{1-\frac1\theta}, \label{e:u-adapt}\\
|\nabla\rho(x)|&\leq A\rho(x)^{1-\frac1\theta},\label{e:rho-adapt} \\
\quad |\nabla h(x)|&\leq A\rho(x)^{-\frac1\theta},\label{e:g-adapt}
\end{align}
for any  $x\in \Omega_k\setminus \Sigma$.
\end{enumerate}
\end{definition}

Let $u$ be an adapted short embedding with respect to some compact set $\Sigma\subset \mathcal{M}$ with exponent $\theta$ (c.f.~Definition \ref{d:adapted}). In particular
$$
g-u^\sharp e=\rho^2(g+h),
$$
with $\Sigma=\{\rho=0\}$. Furthermore, let $S\supset\Sigma$ be another compact subset. Our next goal is to show that, under certain conditions, we can perturb $u$  using Proposition \ref{p:stage} to construct another adapted short embedding with respect to the larger compact set $S$ with some exponent $\theta'<\theta$. In particular, we will be able to successively perturb $u$ to make it isometric along the skeleta of a suitable triangulation, eventually ending up with an isometry of a neighborhood of $\Sigma$ for the flexibility part of Theorem \ref{t:rigidity-flexibility}, respectively an isometry of $\mathcal M$ in Theorem \ref{t:global}.
We recall from  \cite{CS20} the geometric condition which the two compact sets $\Sigma\subset S$ have to satisfy:
\begin{condition}\label{c:geometric}
There exists a geometric constant $\bar{r}>0$ such that
for any $\delta>0$ the set
$$
\biggl\{x\in\mathcal{M}:\,\textrm{dist}(x,\Sigma)\geq \delta\textrm{ and }\textrm{dist}(x,S)\leq \bar{r}\delta\biggr\}
$$
is contained in a pairwise disjoint union of open sets, each contained in a single chart
$\Omega_k$.
\end{condition}

Recall that in Proposition \ref{p:stage} we impose a smallness-condition on the oscillation of our metric $g$. We now fix an atlas for $\mathcal M$ respecting this condition as follows. Fix an arbitrary atlas of finitely many charts $\Omega_k$. By compactness there exists $\gamma_0\geq 1$ such that
\[ \gamma_0^{-1}\mathrm{Id} \leq G\leq \gamma_0\mathrm{Id},\, \|G\|_{C^{1}{(\Omega_k)}} \leq \gamma_0 \]
on any $\Omega_k$,  where, as above, $G$ is the coordinate expression of $g$. If necessary, we then subdivide $\Omega_k$ to achieve $\mathrm{osc}_{\Omega_k} G<\sigma_0(\gamma_0)$. The charts in Definition \ref{d:adapted} and Condition \ref{c:geometric} are assumed to satisfy these assumptions.

With Proposition \ref{p:stage} we are now ready to state and prove our inductive proposition, analogous to the iteration Proposition 4.1 in \cite{CS20}. The main difference in the proof when compared to the one of \cite{CS20} is the choice of $\tau$ (when applying our Proposition \ref{p:stage}) and corresponding estimates on $h$.

\begin{proposition}\label{p:inductive}
 Let $0<\theta<\frac12, b>1$, $\sigma<\frac{\sigma_0}{4}$. There exists a constant $A_0=A_0(\theta, \sigma,b)\geq 1$, such that the following holds.

Let $\Sigma\subset S$ be compact subsets of $\mathcal{M}$ satisfying Condition \ref{c:geometric}.
Let $u\in C^{1, \theta}(\mathcal{ M})$ be an adapted short embedding with respect to $\Sigma$ such that $g-u^\sharp e=\rho^2(g+h)$ with $\rho\leq 1/4$ in $\mathcal{M}$, $\Sigma=\{\rho=0\}$, and in any chart $\Omega_k$
\begin{equation}\label{e:inductive-u-rho-assumption}
\begin{split}
|\nabla^2 u|\leq A\rho^{1-\frac{1}{\theta}}, &\quad |\nabla\rho|\leq A\rho^{1-\frac{1}{\theta}}, \\
|h|\leq \sigma,&\quad |\nabla h|\leq A\rho^{-\frac{1}{\theta}},
\end{split}
\end{equation}
for some $A\geq A_0$. Then there exists an adapted short embedding $\bar{u}\in C^{1, \theta'}(\mathcal{M})$ with respect to $S$ such that $g-\bar{u}^\sharp e=\bar{\rho}^2(g+\bar{h})$,  $\bar{\rho}\leq\rho$, $\|\bar{u}-u\|_0\leq A^{-1/2}$,  and $\bar{u}=u$, $d\bar u= du$ on $\Sigma$\footnote{The equality $du=d\bar u$ on $\Sigma$ is intended as an equality of sections of the bundle $T^{*}\mathcal M\to \Sigma$.}. Moreover, in any chart $\Omega_k$
\begin{equation}\label{e:inductive-conclusion}
\begin{split}
|\nabla^2 \bar{u}|\leq A'\bar{\rho}^{1-\frac{1}{\theta'}}, &\quad |\nabla\bar{\rho}|\leq A'\bar{\rho}^{1-\frac{1}{\theta'}}, \\
|\bar{h}|\leq \sigma',&\quad |\nabla \bar{h}|\leq A'\bar{\rho}^{-\frac{1}{\theta'}},
\end{split}
\end{equation}
with
$$A'=A^{b^2}, \quad \theta'=\frac{\theta}{b^2}, \quad  \sigma'=4\sigma.$$

\end{proposition}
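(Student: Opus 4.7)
The plan is to construct $\bar u$ by applying Proposition \ref{p:stage} on a dyadic family of annular shells around $\Sigma$, one per chart. Condition \ref{c:geometric} guarantees that at each scale the shells (restricted to where perturbation is needed) are a pairwise disjoint union of chart domains, so the local stages do not interact and can be summed into a single global map that inherits the stage estimates. The deficit $\rho^{2}(g+h)$ is consumed on the outer part of each shell and replaced by the much smaller stage error, yielding the new adapted-short datum $\bar\rho^{2}(g+\bar h)$.

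Concretely, introduce the dyadic scale $\delta_{j}=2^{-j}$ for $j\geq j_{0}$ and the annular shell
\[
A_{j}=\{x:\,\delta_{j+1}\leq\mathrm{dist}(x,\Sigma)\leq\delta_{j}\},
\]
so that \eqref{e:inductive-u-rho-assumption} gives $\rho\lesssim\delta_{j}^{\theta}$ on $A_{j}$. Fix a cut-off $\chi_{j}$ equal to $1$ on the middle portion of $A_{j}$, supported in $A_{j}$, and set $\rho_{j}=\chi_{j}\rho$, $H_{j}=h$. Applying Condition \ref{c:geometric} with $\delta=\delta_{j+1}$, the subset of $A_{j}$ lying within distance $\bar r\delta_{j+1}$ of $S$ decomposes into a disjoint union of chart domains. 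On each such chart, first mollify $u,\rho,h$ at a length-scale $\sim\delta_{j}^{1/\theta}$ to satisfy the $C^{2}$-hypotheses \eqref{e:u-assumption}--\eqref{e:H-assumption}, then invoke Proposition \ref{p:stage} with the choice
\[
\delta_{\mathrm{stage}}^{1/2}=C\delta_{j}^{\theta},\qquad\lambda_{j}=\delta_{j}^{-1/\theta'},\qquad\tau=b.
\]
This produces a local perturbation $v_{j}$ supported (up to a boundary layer of size $\lambda_{j}^{1-2b}$) inside $A_{j}$. Patching the $v_{j}$'s gives the global $\bar u$. Since no perturbation touches $\Sigma$, we immediately have $\bar u=u$ and $d\bar u=du$ on $\Sigma$; moreover $\|\bar u-u\|_{0}\leq A^{-1/2}$ follows by summing \eqref{e:stage-2} provided $A_{0}$ is large enough.

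To verify \eqref{e:inductive-conclusion}, observe that the new defect $g-\bar u^{\sharp}e$ decomposes shell-by-shell as $\bar\rho^{2}(g+\bar h)$: on the inner part of $A_{j}$ the defect is still $\rho^{2}(g+h)$ (which already obeys the sharper bounds of the old exponent $\theta\geq \theta'$), whereas on the outer part it equals the stage error $\mathcal E_{j}$ whose bounds \eqref{e:stage-5} translate, via the parameter choice above, into $|\bar\rho|\lesssim\delta_{j}^{\theta/b}$ and $|\nabla\bar h|\lesssim A^{b^{2}}\bar\rho^{-1/\theta'}$. The Hölder regularity $\bar u\in C^{1,\theta'}$ then follows by interpolating the summed estimates \eqref{e:stage-2}--\eqref{e:stage-4}, and the bounds $|\bar h|\leq\sigma'=4\sigma$ come from geometrically decaying contributions in $j$.

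The main obstacle is the coupled matching of $\tau$, $\lambda_{j}$ and $\delta_{\mathrm{stage}}$. The stage $C^{2}$-blowup $\|v\|_{2}\lesssim\delta_{\mathrm{stage}}^{1/2}\lambda^{\tau}$ must be compatible with the target $A'\bar\rho^{1-1/\theta'}$, while simultaneously the error $\|\mathcal E\|_{0}\lesssim\delta_{\mathrm{stage}}\lambda^{2-2\tau}$ must be at most $\bar\rho^{2}$. Substituting the choices above, these two requirements are compatible precisely when $b>1$ and they force the loss from $\theta$ to $\theta/b^{2}$: one factor of $b$ for the second-derivative scale and another for the defect scale. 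The threshold $A_{0}(\theta,\sigma,b)$ is dictated on one hand by $\lambda_{j}\geq\lambda_{0}(\tau,\gamma,\sigma_{0})$ at the first scale (cf.~\eqref{e:stageparameters}), and on the other by the geometric summation that must keep $|\bar h|$ below $4\sigma$.
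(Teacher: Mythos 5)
Your overall strategy has a structural flaw that cannot be repaired by adjusting parameters: a one‑pass decomposition into dyadic shells centered on $\Sigma$ cannot produce an adapted short embedding \emph{with respect to $S$}. By Definition \ref{d:adapted} the conclusion requires $\{\bar\rho=0\}=S$, i.e.\ $\bar u$ must be exactly isometric on all of $S$, including points $p\in S$ with $\mathrm{dist}(p,\Sigma)>0$ (in the application $S=\Sigma\cup\mathcal V$ with $\mathcal V$ the vertex set of a triangulation, so such points are the generic case). In your scheme each such $p$ lies in at most two shells $A_j$, hence receives only finitely many corrections, and the residual defect at $p$ is the last stage error $\mathcal E_j$, which is small but strictly positive by \eqref{e:stage-5}; thus $\bar\rho(p)>0$ and the conclusion fails. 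Your own estimate "$|\bar\rho|\lesssim\delta_j^{\theta/b}$ on the outer part of $A_j$" only tends to zero as $\mathrm{dist}(x,\Sigma)\to 0$, not as $x\to S$. The paper instead runs an \emph{infinite} iteration $q\mapsto q+1$: the cut-offs $\chi_q=\phi\bigl(\rho/\delta_{q+2}^{1/2}\bigr)\,\psi\bigl(\mathrm{dist}(x,S)/r_{q+1}\bigr)$ couple the size of the remaining error to the distance to $S$ (not to $\Sigma$), each stage replaces the error by $\delta_{q+2}$ on a region shrinking onto $S$, Condition \ref{c:geometric} is invoked at \emph{every} stage $q$ to split $\supp\tilde\chi_q$ into disjoint chart pieces, and $(\bar u,\bar\rho,\bar h)$ is the limit of the resulting Cauchy sequence $(u_q,\rho_q,h_q)$ defined via $\rho_{q+1}^2=\rho_q^2(1-\chi_q^2)+\delta_{q+2}\chi_q^2$.

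Two further points would still need attention even after restructuring. First, closing the induction requires a two-tier bookkeeping absent from your sketch: global bounds with exponent $b^2$ (the $(3)_q$ estimates) together with \emph{sharper} bounds with exponent $b$ on the region where the next stage acts (the $(4)_q$ estimates); without the latter, the constants degrade at every step and the exponent loss is not merely $b^2$. Second, your parameter choice $\tau=b$, $\lambda_j=\delta_j^{-1/\theta'}$ does not match the constraints: the paper must take $\tau=1+\frac{(1-\theta)(b-1)}{b}$ and $\lambda_q=A\delta_q^{-1/(2\theta)}$ so that $\|\mathcal E\|_0\leq C\delta_{q+1}\lambda_{q+2}^{2-2\tau}$ is dominated by $\delta_{q+2}$ while $\|v\|_2\leq C\delta_{q+1}^{1/2}\lambda_{q+2}^{\tau}$ stays below $A^{b^2}\rho_{q+1}^{1-b^2/\theta}$; this is where $\theta<\tfrac12$ enters (through $2\theta<1$ in the inequality $2(b-1)\theta+b\leq b^2$), a constraint your choices do not exhibit. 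Also, no external mollification of $u,\rho,h$ is needed or wanted: Proposition \ref{p:stage} mollifies internally, and the adapted bounds \eqref{e:inductive-u-rho-assumption} already give the hypotheses \eqref{e:u-assumption}--\eqref{e:H-assumption} on $\supp\tilde\chi_q$, where $\rho_q\geq\tfrac32\delta_{q+2}^{1/2}$.
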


\begin{proof}
As in \cite{CS20}, the proof is also  divided into three  steps.

%%%%%%%%%%
{\bf Step 1. Parameters, cut-off functions and error size sequence.}
This step is same as in \cite{CS20}. First, recall that on any chart it holds
$$
{\gamma_0}^{-1}\textrm{Id}\leq G\leq \gamma_0\textrm{Id},\quad \|G\|_{C^1(\Omega_k)}\leq \gamma_0, \quad \mathrm{osc}_{\Omega_k} G < \sigma_0(\gamma_0)\,,
$$
and let $\gamma:=4\gamma_0$. By $\rho<\frac14$ and the assumption that $u$ is an adapted embedding, it is easy to get
$$
{\gamma}^{-1}\textrm{Id}\leq \nabla u^T\nabla u\leq \gamma\textrm{Id}.
$$
Next, set
\begin{equation}\label{e:deltabar}
\delta_1:=\max_{x\in\mathcal{M}}\rho^2,
\end{equation}
and for $q\geq 1$
\begin{align*}
\lambda_q=A\delta_q^{-\frac{1}{2\theta}},\quad \lambda_{q+1}=\lambda_q^b.
\end{align*}
When $A$ is sufficiently large (depending on $\theta,\sigma$),   we have
\begin{equation}\label{e:ordering}
\delta_{q+1}\leq \tfrac{1}{4}\delta_q,\quad \lambda_{q+1}\geq 2\lambda_q.
\end{equation}
We also decompose $\mathcal{M}$ with respect to $\Sigma$ and $S$. Let
$$
r_q=A^{-1}\delta_{q+1}^{\tfrac{1}{2\theta}}=\lambda_{q+1}^{-1},
$$
and define for $q=0, 1,2,\dots$
\begin{align*}
S_q&=\{x:\,\textrm{dist}(x,S)<r_*r_q\},\\
\widetilde S_q&=\{x:\,\textrm{dist}(x,S)<\tilde r_*r_q\},\\
\Sigma_q&=\{x:\,\textrm{dist}(x,\Sigma)<r_{**}r_q\},
\end{align*}
where $r_*<\tilde r_{*}$ and $r_{**}$ are geometric constants to be chosen in the following order:
\begin{enumerate}
\item Choose $r_{**}>0$ so that
\begin{equation}\label{e:choiceofr**}
\rho(x)>\tfrac{3}{2}\delta_{q+2}^{1/2}\quad\textrm{ implies }\quad x\notin \Sigma_{q+1}.\footnote{Such a choice is possible, since \eqref{e:inductive-u-rho-assumption} implies that $\rho$ is H\"older continuous with exponent $\theta$.}
\end{equation}
%Indeed, recall from \eqref{e:adapt-final-2} and the discussion following it, that $\rho$ is $\theta$-H\"older continuous and hence
%$\rho(x)\leq A^{\theta}\textrm{dist}(x,S)^\theta$. In particular, for any $x\in S_{q+1},$ one has $\rho(x)\leq r_{**}^\theta\delta_{q+2}^{1/2}$. Thus such a choice of $r_{**}$ is possibl.e

\item Set $\tilde r_*=\bar{r}r_{**}$, where $\bar{r}>0$ is the geometric constant in Condition \ref{c:geometric}, which implies that for any $q\in\N$
\begin{equation}\label{e:singlecharts}
\begin{split}
	\widetilde S_q\setminus \Sigma _q&\textrm{ \emph{is contained in a pairwise disjoint union of open sets,}}\\
	&\textrm{\emph{each contained in a single chart }}\Omega_k.	
\end{split}
\end{equation}
\item Choose $r_*<\tilde r_*$ so that $\tfrac{1}{2}\tilde r_*<r_*<\tilde r_*$. Then we have by \eqref{e:ordering}
$$
\widetilde S_{q+1}\subset S_q\subset \widetilde S_q\quad\textrm{ for all }q.
$$
\end{enumerate}

Next, we fix cut-off functions $\phi, \tilde\phi, \psi, \tilde\psi\in C^\infty(0,\infty)$ with $\phi,\tilde\phi$ monotonic increasing, $\psi,\tilde\psi$ monotonic decreasing such that
$$
\phi(s),\tilde\phi(s)=\begin{cases} 1&s\geq 2\\ 0&s\leq \tfrac32\end{cases}\,,\quad
\psi(s),\tilde\psi(s)=\begin{cases} 1&s\leq r_*\\ 0&s\geq \tilde r_*\end{cases}\,,
$$
and in addition
$$
\tilde\phi(s)=1 \textrm{ on }\supp\phi,\quad \tilde\psi(s)=1 \textrm{ on }\supp\psi.
$$
As in \cite{CS20}, set
\begin{align*}
\chi_q(x)=\phi\left(\frac{\rho(x)}{\delta_{q+2}^{1/2}}\right)\psi\left(\frac{\textrm{dist}(x, S)}{r_{q+1}}\right),\quad
\tilde\chi_q(x)=\tilde\phi\left(\frac{\rho(x)}{\delta_{q+2}^{1/2}}\right)\tilde\psi\left(\frac{\textrm{dist}(x, S)}{r_{q+1}}\right).
\end{align*}
Using \eqref{e:inductive-u-rho-assumption} and the choice of $r_q$, $r_*$, $\tilde r_*$ and the cut-off functions we easily deduce
 \begin{align}
 |\nabla\chi_q|,\, |\nabla\tilde\chi_q|&\leq CA\delta_{q+2}^{-\frac{1}{2\theta}}=C\lambda_{q+2},\label{e:gradient-chi}\\
\textrm{dist}(\supp\chi_q, \partial\supp\tilde\chi_q)&\geq C^{-1}A^{-1}\delta_{q+2}^{\frac{1}{2\theta}}=C^{-1}\lambda^{-1}_{q+2}.\label{e:chi-q-support}
 \end{align}
for some constant $C$ depending on $r_*,\tilde r_*$, and moreover
\begin{equation}\label{e:chi-define}
\begin{split}
\{x\in S_{q+1}|\rho(x)>2\delta_{q+2}^{1/2}\}&\subset\{x\in\mathcal{M}:\,\chi_q(x)=1\},\\
\supp\chi_q&\subset \{x\in\mathcal{M}:\,\tilde{\chi}_q(x)=1\},\\
\supp\tilde{\chi}_q&	\subset\{x\in\widetilde S_{q+1}:\,\rho(x)>\tfrac{3}{2}\delta_{q+2}^{1/2}\}.
\end{split}
\end{equation}
From \eqref{e:choiceofr**}  and \eqref{e:singlecharts} we then deduce that $\supp\tilde\chi_q$ is contained in a pairwise disjoint union of open sets, each contained in a single chart $\Omega_k$.

%%%%%%%%%%%%%%%

Finally, we define  the sequence of error size $\{\rho_q\}.$
Set $\rho_0=\rho$ and define $\rho_{q}$ for $q=1,2,\dots$ inductively as
\begin{equation}\label{e:rho-q+1}
\rho_{q+1}^2=\rho_q^2(1-\chi_q^2)+\delta_{q+2}\chi_q^2.
\end{equation}
One can prove by induction (cf. Lemma 4.1 of \cite{CS20}) that the thus defined maps $\rho_q$ have  the following properties.
\begin{lemma}\label{l:rho}
Let $\{\rho_q\}$ be defined in \eqref{e:rho-q+1}. Then for any $q=0,1,\dots$
\begin{enumerate}
	\item[(i)] On $\supp\tilde\chi_q$ it holds
	\begin{equation*}%\label{e:rhoq-bound}
	\tfrac{3}{2}\delta_{q+2}^{1/2}\leq\rho_q\leq2\delta_{q+1}^{1/2}.
	\end{equation*}
	\item[(ii)] For every $x$ we have $\rho_{q+1}(x)\leq\rho_q(x)$.
	\item[(iii)] If $\rho_q(x)\leq\delta_{q+1}^{1/2}$, then $x\not\in\bigcup_{j=0}^{q-1}\supp\tilde\chi_j$ and consequently $\rho_q(x)=\rho(x)$.
	\item[(iv)] If $\rho_q(x)\geq \delta_{q+1}^{1/2}$, then either $\chi_q(x)=1$ or $x\notin S_{q+1}$.
	\end{enumerate}
\end{lemma}

Now we are ready to inductively construct  a sequence of adapted short embeddings.
%%%%%%%%%%%%%%%

{\bf Step 2. Inductive construction}
%%%%%%%%%%%%%%%%%%
This step is similar to that of Proposition 4.1 in \cite{CS20}, but we need to pay more attention to the choice of $\tau$ and the estimate of $h$. We will construct a sequence of smooth adapted short embeddings $(u_q, \rho_q, h_q)$ such that the following hold:
\begin{itemize}
\item[$(1)_q$] For all $\mathcal{M},$ we have
$$g-u_q^\sharp e=\rho_q^2(g+h_q).$$
\item[$(2)_q$] If $x\notin\bigcup_{j=0}^{q-1}\supp\tilde\chi_j$, then $(u_q, \rho_q, h_q)=(u_0, \rho_0, h_0)$ and $ du_q=du_0$ along $\Sigma.$
\item[$(3)_q$] The following estimates hold in $\mathcal{M}$:
\begin{align}
|\nabla^2u_q|\leq A^{b^2}\rho_q^{1-\frac{b^2}{\theta}},\quad & |\nabla\rho_q|\leq A^{b^2}\rho_q^{1-\frac{b^2}{\theta}} ,\label{e:inductive-v-rho-j}\\
|h_q|\leq 4\sigma, %A^{-\frac{\theta\alpha}{2b^2}}\rho_q^{\frac{\alpha}{2b^2}},
\quad &|\nabla h_q|\leq A^{b^2}\rho_q^{-\frac{b^2}{\theta}},
\label{e:inductive-h-j}
\end{align}
\item[$(4)_q$] On $\{x: \rho_0(x)>\delta_{q+1}^{1/2}\}\cap S_q,$  we have the sharper estimates
\begin{align}
|\nabla^2u_q|\leq A^{b}\rho_q^{1-\frac{b}{\theta}}, \quad &|\nabla\rho_q|\leq A^b\rho_q^{1-\frac{b}{\theta}} ,\label{e:inductive-rho-q}\\
|h_q|\leq \sigma,
\quad &|\nabla h_q|\leq A^{b}\rho_q^{-\frac{b}{\theta}}.
\label{e:inductive-h-q}
\end{align}
\item[$(5)_q$] We have the global estimate for $q\geq1$
\begin{align}
&\|u_q-u_{q-1}\|_0\leq \overline{C}\delta_q^{1/2}\lambda_{q}^{-1},\label{e:inductive-v-difference-0}\\
&\|u_q-u_{q-1}\|_1\leq\overline{C}\delta_q^{1/2},\label{e:inductive-v-difference-1}
\end{align}
where $\overline{C}$ is the constant in the conclusions of Proposition \ref{p:stage} in \eqref{e:stage-2}-\eqref{e:stage-3}.
\end{itemize}

{\it Initial step $q=0$.}  Set $(u_0, \rho_0, h_0)=(u, \rho, h).$ Since $b>1,$ it is easy to check $(1)_0-(2)_0$ and $(4)_0$ from \eqref{e:inductive-u-rho-assumption}.

{\it Inductive step $q\mapsto q+1$. } Suppose $(u_q, \rho_q, h_q)$  is an adapted short embedding on $\mathcal{M}$ satisfying $(1)_q-(5)_q.$ We then construct $(u_{q+1}, \rho_{q+1}, h_{q+1}).$ In fact, $\rho_{q+1}$ has already been defined in \eqref{e:rho-q+1}.
We shall estimate $(u_q, \rho_q, h_q)$ on $\supp\tilde\chi_q.$
As derived in \cite{CS20}, on $\supp\tilde\chi_q,$ we have
\begin{equation}\label{e:support-rho_q}
\begin{split}
\tfrac{3}{2}\delta_{q+2}^{1/2}&\leq \rho_q\leq2\delta_{q+1}^{1/2},\\
 |\nabla \rho_q|&\leq \delta_{q+1}^{1/2}\lambda_{q+2} ,\, |\nabla^2u_q|\leq \delta_{q+1}^{1/2}\lambda_{q+2},\,\left|\frac{\nabla\rho_q}{\rho_q}\right|\leq \lambda_{q+2}\\
|h_q|&\leq\sigma, \,\, |\nabla h_q|\leq \lambda_{q+2}.
\end{split}
\end{equation}
We then want to apply Proposition \ref{p:stage} to construct $(u_{q+1}, h_{q+1}).$ To this end define
\begin{align*}
\tilde{\rho}_q=\chi_q\sqrt{\rho_q^2-\delta_{q+2}},\quad \tilde h_q=\frac{\tilde\chi_q\rho_q^2}{\rho_q^2-\delta_{q+2}}h_q\,.
\end{align*}
From \eqref{e:support-rho_q}, one has on $\supp\tilde\chi_q$
$$\tfrac54\delta_{q+2}\leq\rho_q^2-\delta_{q+2}\leq4\delta_{q+1},$$
hence $\tilde\rho_q$ and $ \tilde h_q$ are well defined. Note that with these definitions, we then have
$$\tilde\rho_q^2(g+\tilde h_q)=\chi_q^2((\rho_q^2-\delta_{q+2})g +\rho_q^{2}h_q)=\chi_q^2(g-u_{q}^\sharp e-\delta_{q+2}g)$$
using that $\tilde \chi_q = 1$ on the support of $\chi_q$ and the inductive assumption $(1)_q$. Thus, by adding the tensor $\tilde \rho_q^{2}(g+\tilde h_q)$ we will be able to get a map $u_{q+1}$ which is, upto an error of size $\delta_{q+2}$, isometric on the support of $\chi_q$.

 We therefore want to estimate $\tilde \rho_q$ and $\tilde h_q$ and choose $\delta, \lambda$ in Proposition \ref{p:stage} accordingly.  We thus set $\Omega = \supp\tilde\chi_q,$ and observe
 \begin{align*}
 |\nabla\sqrt{\rho_q^2-\delta_{q+2}}|&\leq C|\nabla\rho_q|,\\
\frac{\rho_q^2}{\rho_q^2-\delta_{q+2}}&=1+\frac{\delta_{q+2}}{\rho_q^2-\delta_{q+2}}\leq2,\\
\left|\nabla\frac{\rho_q^2}{\rho_q^2-\delta_{q+2}}\right|&=\left|\nabla\frac{\delta_{q+2}}{\rho_q^2-\delta_{q+2}}\right|
\leq C\left|\frac{\nabla\rho_q}{\rho_q}\right|,
 \end{align*}
where $C$ are geometric constants. Therefore, using \eqref{e:gradient-chi} and \eqref{e:support-rho_q} we can infer
\begin{equation}\label{e:rho-tilde}
\begin{split}
0\leq\tilde\rho_q&\leq\rho_q\leq2\delta_{q+1}^{1/2},\, |\tilde h_q|\leq2\sigma\leq\frac{\sigma_0}{2},\\
|\nabla\tilde\rho_q|&\leq C(|\nabla\chi_q|\rho_q+|\nabla\rho_q| \leq C\delta_{q+1}^{1/2}\lambda_{q+2},\\
|\nabla\tilde h_q|&\leq C(|\nabla\tilde\chi_q||h_q|+\left|\frac{\nabla\rho_q}{\rho_q}\right||h_q|+|\nabla h_q|)\leq C\lambda_{q+2}.
\end{split}
\end{equation}
Therefore $(u_q, \tilde\rho_q, \tilde h_q)$ satisfies all the assumptions in Proposition \ref{p:stage} on $\supp\tilde\chi_q$ with $\delta, \lambda$ given by $4\delta_{q+1}, C\lambda_{q+2}$ respectively. Setting \[\tau=1+\frac{1-\theta}{b}(b-1)>1\,,\]
we only need to make sure that $C\lambda_{q+2} \geq \lambda_0(\gamma,\sigma_0, \tau)$ in \eqref{e:stageparameters}. This however follows by choosing $A\geq A_0(\theta,\sigma,b)$ large enough.

Thus, recalling \eqref{e:singlecharts} that  $\supp\tilde\chi_q$ is contained in a pairwise disjoint union of open sets, each contained in a single chart,  we may apply Proposition \ref{p:stage} in each open set separately in local coordinates to add the term $\tilde\rho_q^2(g+\tilde h_q)$.
Overall we obtain $u_{q+1}$ and $\mathcal{E}$ such that
$$g-u_{q+1}^\sharp e=(g-u_q^\sharp e)(1-\chi_q^2)+\delta_{q+2}g\chi_q^2+\mathcal{E}.$$
with $u_{q+1}$ satisfying
\begin{equation}\label{e:v-q+1-c2}
|\nabla^2u_{q+1}|\leq C\delta_{q+1}^{1/2}\lambda_{q+2}^{\tau}=C\delta_{q+1}^{1/2}\lambda_{q+1}^{b+(1-\theta)(b-1)},
\end{equation}
and $\mathcal{E}$ satisfying
\begin{align}
&|\mathcal{E}|\leq C\delta_{q+1}\lambda_{q+2}^{2-2\tau}=C\delta_{q+2}\lambda_{q+1}^{-2(1-2\theta)(b-1)}, \label{e:error-q+1--1}\\
&|\nabla\mathcal{E}|\leq C\delta_{q+1}\lambda_{q+2}=C\delta_{q+2}\lambda_{q+1}^{b+2\theta(b-1)}, \label{e:error-q+1--2}
\end{align}
which are implied by $\delta_{q+1}=\lambda_{q+1}^{2\theta(b-1)}\delta_{q+2}$ and \eqref{e:stage-4}-\eqref{e:stage-5}. From  \eqref{e:stage-support}, one gets
$$\supp(u_{q+1}-u_q), \,\supp\mathcal{E}\subset\supp\chi_q+B_{\kappa_q}(0),$$
with
$$
\kappa_q=(C\lambda_{q+2})^{1-2\tau}\leq \lambda_{q+1}^{-2(1-\theta)(b-1)}\lambda_{q+2}^{-1}\leq C^{-1}\lambda_{q+2}^{-1},
$$
where $C$ is the constant in \eqref{e:chi-q-support} and the last inequality holds provided $A$ is sufficiently large. Consequently $u_{q+1}=u_q,, du_{q+1}=du_q$ and $\mathcal{E}=0$ outside  $\supp\tilde\chi_q$.

Moreover, \eqref{e:inductive-v-difference-0} and \eqref{e:inductive-v-difference-1} for the case $q+1$ follow immediately from \eqref{e:stage-2}-\eqref{e:stage-3}, hence $(5)_{q+1}$ is verified. We also define
$$h_{q+1}=(1-\chi_q^2)\frac{\rho_q^2}{\rho_{q+1}^2}h_q+\frac{\mathcal{E}}{\rho_{q+1}^2}$$
so that $$g-u_{q+1}^\sharp e=\rho_{q+1}^2(g+h_{q+1}),$$
verifying $(1)_{q+1}.$  Note that on $\supp\tilde\chi_q$ using \eqref{e:support-rho_q} one has
\begin{equation}\label{e:rhoq+1-bound}
\begin{split}
\rho_{q+1}^2&\leq4\delta_{q+1}(1-\chi_q^2)+\delta_{q+2}\chi_q^2\leq 4\delta_{q+1},\\
\rho_{q+1}^2&\geq\tfrac94\delta_{q+2}(1-\chi_q^2)+\delta_{q+2}\chi_q^2\geq\delta_{q+2}.
\end{split}
\end{equation}
Thus $h_{q+1}$ is well defined. Besides we can also derive that $(\rho_{q+1}, h_{q+1})$ agrees with $(\rho_q, h_q)$ outside $\supp\tilde\chi_q.$ It remains to verify $(2)_{q+1}-(4)_{q+1}$ on $\supp\tilde\chi_q.$

{\it Verification of $(2)_{q+1}$} If $x\not\in\bigcup_{j=0}^{q}\supp\tilde\chi_j$, then  $\tilde\chi_q(x)=0$ and therefore
$$(u_{q+1}, \rho_{q+1}, h_{q+1})=(u_q, \rho_q, h_q)=(u_0, \rho_0, h_0).$$

{\it Verification of $(3)_{q+1}$}  On $\supp\tilde\chi_q,$ we first calculate
\begin{equation}\label{e:gradient-rho-q+1}
\begin{split}
|\nabla\rho_{q+1}|&=\frac{|\nabla\rho_{q+1}^2|}{2\rho_{q+1}}\leq\frac{C}{\rho_{q+1}}(|\rho_q\nabla\rho_q|
+|\nabla\chi_q|(\rho_q^2+\delta_{q+2}))\\
&\leq C\frac{\delta_{q+1}\lambda_{q+2}}{\delta_{q+2}^{1/2}}=CA^{b+(b-1)\theta}\delta_{q+1}^{1-\frac{b}{2}(1+\frac{1}{\theta})}\\
&\leq A^{b^2}(2\delta_{q+1}^{1/2})^{1-\frac{b^2}{\theta}}\leq A^{b^2}\rho_{q+1}^{1-\frac{b^2}{\theta}},
\end{split}
\end{equation}
where we have used \eqref{e:gradient-chi}, \eqref{e:support-rho_q} and \eqref{e:rhoq+1-bound}. For the inequality in the last line we have used that $1-\frac{b}{2}(1+\frac{1}{\theta})\geq \frac{1}{2}(1-\frac{b^2}{\theta})$, $2(b-1)\theta+b\leq b^2$ ( from $b>1$ and $2\theta<1$) and $A$ sufficiently large to absorb geometric constants.

Similarly, using \eqref{e:support-rho_q}, \eqref{e:v-q+1-c2}-\eqref{e:error-q+1--1} and \eqref{e:rhoq+1-bound} we obtain
\begin{equation}\label{e:hq+1-bound-v-c2}
\begin{split}
|h_{q+1}|&\leq|h_q|+\frac{|\mathcal{E}|}{\rho_{q+1}^2}\leq 2\sigma+C\lambda_{q+1}^{-2(1-2\theta)(b-1)}\leq 3\sigma,\\
|\nabla^2u_{q+1}|&\leq C\delta_{q+1}^{1/2}\lambda_{q+1}^{b+(1-\theta)(b-1)}\leq C\delta_{q+1}^{1/2}\lambda_{q+1}^{b^2-\theta(b-1)}\leq CA^{b^2-\theta(b-1)}\delta_{q+1}^{\frac{1}{2}(1-\frac{b^2}{\theta})}\\
&\leq A^{b^2}\rho_{q+1}^{1-\frac{b^2}{\theta}},
\end{split}
\end{equation}
where we have used $(1-\theta)(b-1)+\theta(b-1)\leq b^2-b$ (by $b>1$) and again assumed $A$ sufficiently large to absorb the constants $C$.
For $|\nabla h_{q+1}|,$ we calculate as follows.
\begin{equation}\label{e:gradient-h-q+1}
\begin{split}
|\nabla h_{q+1}|&\leq|\nabla h_q|+\frac{1}{\rho_{q+1}^2}(|\nabla\mathcal{E}|+\delta_{q+2}|\nabla(h_q\chi_q^2)|)+\frac{2|\nabla\rho_{q+1}|}{\rho_{q+1}^3}(\delta_{q+2}|h_q|+|\mathcal{E}|)\\
&\leq C\lambda_{q+2}+C\lambda_{q+1}^{b+2\theta(b-1)}+C\frac{\delta_{q+1}\lambda_{q+2}}{\delta_{q+2}}
(\sigma+\lambda_{q+1}^{-2(1-\theta)(b-1)})\\
&\leq C\lambda_{q+2}+C\lambda_{q+1}^{b+2\theta(b-1)}+C\frac{\delta_{q+1}}{\delta_{q+2}}\lambda_{q+2}\\
&\leq C\lambda_{q+1}^{b+2\theta(b-1)},
\end{split}
\end{equation}
where we have used \eqref{e:gradient-chi}, \eqref{e:support-rho_q}, \eqref{e:error-q+1--1}, \eqref{e:error-q+1--2} and \eqref{e:gradient-rho-q+1}.
Using again the inequality $b+2\theta(b-1)<b^2-(1-2\theta)(b-1)$,  we further estimate
\begin{equation}\label{e:gradient-h-q+1-0}
\begin{split}
|\nabla h_{q+1}|&\leq C\lambda_{q+1}^{b^2-(1-2\theta)(b-1)} \leq  CA^{b^2-(1-2\theta)(b-1)}\delta_{q+1}^{-\frac{b^2}{2\theta}}\\
&\leq A^{b^2}\rho_{q+1}^{-\frac{b^2}{\theta}},
\end{split}
\end{equation}
where we have again used that $A$ is sufficiently large. Thus we have shown \eqref{e:inductive-v-rho-j} for $q+1,$ i.e. $(3)_{q+1}$ is verified.

{\it Verification of $(4)_{q+1}$} Observe that by \eqref{e:chi-define}
\begin{align*}
\{x\in S_{q+1}: \rho_0(x)>\delta_{q+2}^{1/2}\}
=\{\chi_q(x)=1\}\cup\{x\in S_{q+1}: \delta_{q+2}^{1/2}\leq\rho_0(x)\leq2\delta_{q+2}^{1/2}\}.
\end{align*}
If $x\in\{\chi_q=1\},$ then
$$\rho_{q+1}=\delta_{q+2}^{1/2},\quad  h_{q+1}=\frac{\mathcal{E}}{\delta_{q+2}}.$$
Using \eqref{e:v-q+1-c2},
\begin{equation}\label{e:vq+1}
|\nabla^2 u_{q+1}|\leq C\delta_{q+1}^{1/2}\lambda_{q+1}^{b+(1-\theta)(b-1)}= C\delta_{q+1}^{1/2}\lambda_{q+1}^{2b-\theta(b-1)-1}\leq CA^{2-\frac{1}{b}-b}A^b\delta_{q+2}^{\frac{1}{2}(1-\frac{b}{\theta})}.
\end{equation}
where we have used $2-\frac{1}{b}<b$.  By taking $A$ sufficiently large we absorb the geometric constant $C$ and deduce \eqref{e:inductive-rho-q}.

In order to verify \eqref{e:inductive-h-q} we calculate using \eqref{e:error-q+1--1}-\eqref{e:error-q+1--2}:
\begin{equation*}
\begin{split}
|h_{q+1}|&\leq C\lambda_{q+2}^{-2(1-2\theta)(b-1)}\leq\sigma,\\
|\nabla h_{q+1}|&\leq C\lambda_{q+1}^{b+2\theta(b-1)}\leq \lambda_{q+2}^{b}=A^{b}\delta_{q+2}^{-\frac{b}{2\theta}}
\end{split}
\end{equation*}
using $b+2\theta(b-1)<b^2$ By choosing $A$ sufficiently large, we can then absorb again the geometric constants and conclude  \eqref{e:inductive-h-q}. Hence $(4)_{q+1}$ is obtained for this case.

\smallskip

On the other hand, if $x\in\{x\in\Sigma_{q+1}: \delta_{q+2}^{1/2}\leq\rho_0(x)\leq2\delta_{q+2}^{1/2}\},$
then $(u_q, \rho_q, h_q)=(u_0, \rho_0, h_0)$ by $(2)_q$ and $\rho_0\leq2\delta_{q+2}^{1/2}$.
Thus
\begin{equation}\label{e:rho-q+1-bound}
\begin{split}
\rho_{q+1}^2&\geq\delta_{q+2}(1-\chi_q^2)+\delta_{q+2}\chi_q^2\geq\delta_{q+2},\\
\rho_{q+1}^2&\leq4\delta_{q+2}(1-\chi_q^2)+\delta_{q+2}\chi_q^2\leq4\delta_{q+2}.
\end{split}
\end{equation}
Therefore, choosing again $A$ sufficiently large to absorb geometric constants,
\begin{equation}
\label{e:h-q+1-bound}
\begin{split}
|h_{q+1}|&\leq |h_0|+\left|\frac{\mathcal{E}}{\rho_{q+1}^2}\right|\\
&\leq \sigma+C\lambda_{q+1}^{-2(1-2\theta)(b-1)}\\
&\leq 2\sigma.
\end{split}
\end{equation}
Moreover, calculating as in \eqref{e:gradient-rho-q+1} but this time using \eqref{e:rho-q+1-bound}
\begin{equation*}
\begin{split}
|\nabla\rho_{q+1}|&=\frac{|\nabla\rho_q^2|}{2\rho_{q+1}}\leq\frac{C}{\rho_{q+1}}(|\rho_q\nabla\rho_q|
+|\nabla\chi_q|(\rho_q^2+\delta_{q+2}))\\
&\leq C\delta_{q+2}^{1/2}\lambda_{q+2}=CA\delta_{q+2}^{\frac{1}{2}(1-\frac{1}{\theta})}\\
&\leq A^{b}\delta_{q+2}^{\frac{1}{2}(1-\frac{b}{\theta})}\leq A^{b}\rho_{q+1}^{1-\frac{b}{\theta}}.
\end{split}
\end{equation*}
Similarly, proceeding as in \eqref{e:gradient-h-q+1}-\eqref{e:gradient-h-q+1-0} we have
\begin{equation*}
|\nabla h_{q+1}|\leq C\lambda_{q+2}^{1+2\theta(1-\frac{1}{b})}=CA^{1+2\theta(1-\frac{1}{b})}\delta_{q+2}^{-\frac{1}{2\theta}-(1-\frac{1}{b})}\leq A^{b}\rho_{q+1}^{-\frac{b}{\theta}}.
\end{equation*}
Finally, the estimate for $\nabla^2u_{q+1}$ has already been obtained in \eqref{e:vq+1}.
Therefore $(4)_{q+1}$ is verified also in this case.

Overall we have shown that $(u_{q+1}, \rho_{q+1}, h_{q+1})$ satisfies $(1)_{q+1}-(5)_{q+1}.$

%%%%%%%%%%%%%%
\noindent{\bf Step 3. Conclusion}
%%%%%%%%%%%%%%

We are now in a position to take the limit as $q\rightarrow\infty.$ Recalling \eqref{e:ordering} we see that $\delta_q^{1/2}\leq 2^{-q-1}$ and $\delta_q^{1/2}\lambda_q^{-1}\leq A^{-1}2^{-q-1}$. In particular from $(5)_q$ we see that $\{u_q\}$ is a Cauchy sequence in $C^1(\mathcal{M})$.

From the formula \eqref{e:rho-q+1} and Lemma \ref{l:rho} we deduce $0\leq \rho_{q}-\rho_{q+1}\leq 2\delta_{q+1}^{1/2}$, so that $\{\rho_q\}$ is a Cauchy sequence in $C^0(\mathcal{M})$. From $(1)_q-(3)_q$ we can also deduce that $\{h_q\}$ is a Cauchy sequence in $C^0(\mathcal{M})$; indeed, this follows from the formula  $(1)_q$, the fact that $u_q^\sharp e$ and $\rho_q^2$ are Cauchy sequences, and \eqref{e:inductive-h-j}.

Furthermore, since $\supp\tilde\chi_q\subset S_q$ and $\bigcap_q S_q=S$, using $(2)_q$ we see that for any $x\in \mathcal{M}\setminus S$ there exists $q_0=q_0(x)$ such that
$$
(u_{q}, \rho_{q}, h_{q})=(u_{q_0}, \rho_{q_0}, h_{q_0})
$$
for all $q\geq q_0(x)$. Similarly, since $\supp\tilde\chi_q\subset\{\rho>\delta_{q+1}^{1/2}\}$, $(u_q, \rho_q, h_q)$ agrees with $(u,\rho,h)$ on $\Sigma$. Thus there exist
\begin{align*}
\bar{u}&\in C^1(\mathcal{M})\cap C^2(\mathcal{M}\setminus S),\\
\bar{\rho}&\in C^0(\mathcal{M})\cap C^1(\mathcal{M}\setminus S),\\
\bar{h}&\in C^0(\mathcal{M}, \R^{2\times2})\cap C^1(\mathcal{M}\setminus S, \R^{2\times 2}),
\end{align*}
such that
$$
u_q\rightarrow \bar{u}, \quad u_q^\sharp e\rightarrow \bar{u}^\sharp e, \quad \rho_q\rightarrow\bar{\rho},\quad h_q\rightarrow \bar{h}\textrm{ uniformly on }\mathcal{M}.
$$
The limit $(\bar{u}, \bar{\rho}, \bar{h})$ satisfies
$$
g-\bar{u}^\sharp e=\bar{\rho}^2(g+\bar{h})\textrm{ on }\mathcal{M}
$$
using $(1)_q$. By $(2)_q,$ $\bar u=u$ and $ d\bar u=du$ on $\Sigma.$ Moreover, we have
$$
\|\bar{u}-u\|_0\leq \sum_{q=1}^\infty\|u_q-u_{q-1}\|_0\leq \overline{C}A^{-1}\sum_{q=1}^\infty2^{-q-1}= \frac{1}{2}\overline{C}A^{-1}\leq A^{-1/2}
$$
using $(5)_q$ and ensuring $A$ is large enough to absorb the constant $\overline{C}$,
and, using $(3)_q$,
\begin{align*}
|\nabla^2\bar{u}|\leq A^{b^2}\bar{\rho}^{1-\frac{b^2}{\theta}},\quad & |\nabla\bar{\rho}|\leq A^{b^2}\bar{\rho}^{1-\frac{b^2}{\theta}},\\
|\bar{h}|\leq 4\sigma,\quad &|\nabla \bar{h}|\leq A^{{b^2}}\bar{\rho}^{-\frac{b^2}{\theta}}.
\end{align*}
Finally, from Lemma \ref{l:rho} and \eqref{e:chi-define} we see that $\rho_q\leq 2\delta_{q+1}^{1/2}$ on $S$. Combined with the observation above that for any $x\notin S\supset \Sigma$ we have $\bar{\rho}(x)=\rho_q(x)>0$ for some $q$, we deduce $\{\bar{\rho}=0\}=\Sigma$. This proves that $(\bar{u}, \bar{\rho}, \bar{h})$ is an adapted short embedding with respect to $S\supset\Sigma $ with exponent $\theta'=\frac{\theta}{b^2}$, and satisfying \eqref{e:inductive-conclusion} as required. The proof of Proposition \ref{p:inductive} is completed.
\end{proof}
%%%%%%%%%%%

\bigskip

%%%%%%
\section{Proof of Theorem \ref{t:rigidity-flexibility}~(2): Flexibility part}\label{s:flexibility}
%%%%%%%%%%

The goal of this section is to show the flexibility part of Theorem \ref{t:rigidity-flexibility}. The proof is divided into three steps.

{\bf Step 1. Short extension.} In the first step we want to construct an embedding which is isometric on $\Sigma$ and strictly short on $\Sigma_\epsilon^{+}\setminus \Sigma$ for a one-sided neighborhood $\Sigma_\epsilon^{+}\subset M $.
 The construction is analogous to the one in  \cite{CS19}  (see also \cite{HunWas16})  except that we want to define  $u$ not only locally around a point $p\in \Sigma$.

Recall that the one-sided neighborhood is defined as $\Sigma_\epsilon^{+} = F(\Sigma\times [0,\epsilon[)$ for $F:\Sigma\times ]-\epsilon_0,\epsilon_0[ \to \mathcal M$ given by $F(p,t) = \exp_p(t\nu(p))$. We then define our short extension $u:\Sigma_\epsilon^{+}\to \R^{m}$ by
%%To do this we fix $\epsilon_0>0$ such that the map $F:\Sigma\times ]-\epsilon_0,\epsilon_0[\, \to \mathcal M$ given by
%%\begin{equation}\label{d:F}
%%F(p,t) = \exp_p(t\nu(p))
%%\end{equation}
%is a diffeomorphism onto its image, a tubular neighborhood which we denote by $\Sigma_{\epsilon_0}$. Here, $\nu$ is the unit normal vectorfield to $\Sigma$ and $\exp$ is the exponential map. For $\epsilon<\epsilon_0$ we then set $\Sigma_\epsilon^{+}:= F(\Sigma\times [0,\epsilon[)$ and define  our short extension $u:\Sigma_\epsilon^{+} \to \R^{m}$ by
\[ u(F(p,t)) = f(p)+ t\mu(p)-t^{2}\mu(p)\,.\]
We claim that $u$ is isometric on $\Sigma$ and strictly short on $\Sigma_\epsilon^{+}\setminus \Sigma$ if $\epsilon$ is small enough.
Indeed, fix a finite atlas $\{(V_i,\psi_i)\}_{i=1}^{N}$ for the manifold $\Sigma$ and extend it to $\Sigma_\epsilon$ using $F$. More precisely, set $U_i=F(V_i,]-\epsilon_0,\epsilon_0[)$ and define $\varphi_i :U_i\to \R^{n}$ by \[ \varphi_i(F(p,t)) = (\psi_i(p),t)\,.\]
Clearly in these coordinates it holds $\Sigma = \{ t=0\}$, and one can check that the metric in each $U_i$ is of the form
$$
g=\sum_{i, j=1}^{n-1}g_{ij}dx^idx^j+(dt)^2.
$$
Moreover, the scalar second fundamental form of the inclusion $\iota:\Sigma \hookrightarrow \mathcal M $ is given by
\[ L_{ij}(x) = -\frac{1}{2}\partial_t g_{ij}(x,0) \,.\]
By expanding $g_{ij}$ around $t=0$ we then get
\[ g_{ij}(x,t) = g_{ij}(x,0)-2tL_{ij}(x) + O(t^{2})\,.\]
On the other hand, we compute
\[ \langle \partial_i u,  \partial_j u  \rangle =  \langle \partial_i f , \partial_j f \rangle  + t\left ( \langle \partial_if ,\partial_j \mu \rangle  +  \langle \partial_jf , \partial_i \mu \rangle  \right )+ O(t^{2}) \,,\]
and
\[  \langle \partial_i   u, \partial_t u  \rangle = 0 \,, \langle  \partial_t u ,\partial_t u  \rangle = (1-2t)^{2}\,\] thanks to the properties of $\mu$. Since $f$ is an isometry and
\[ \langle  \partial_i f,\partial_j \mu \rangle  =  \langle  \partial_j f,\partial_i \mu \rangle = -\langle \bar L_{ij},\mu \rangle \]
we therefore get

\begin{equation}\label{e:metricadapt1}
g-\nabla u^T\nabla u=2t\left(
  \begin{array}{cc}
\langle \bar L_{ij},\mu \rangle - L_{ij}~&~0\\
0~&~2
  \end{array}
\right)+O(t^2)\,.
\end{equation}
Clearly, $u^{\sharp}e = g$ on $\Sigma$. Moreover, if $\epsilon>0$ is small enough, assumption \eqref{e:HW} implies that there exists $C\geq 1$ such that
\[ \left (g- \nabla u^{T}\nabla u\right )|_{(x,t)}\geq  C^{-1}t \mathrm{Id} \,\]
on $\Sigma_\epsilon^{+}$, showing the strict shortness of $u$ on $\Sigma_\epsilon^{+}\setminus \Sigma$.

Lastly, we observe that for $p\in \Sigma$ it holds
 \[  du_{p}(\nu) = \partial_t u ( F(p,0)) = \mu(p)\]
and consequently for any $X\in T_p\Sigma\setminus \{0\}$
\begin{equation}\label{e:flexibility} \langle du(\nu), \bar L(X,X)\rangle = \langle \mu,\bar L(X,X)\rangle >  L(X,X)\,.\end{equation}

{\bf Step 2. Adapted short extension.} Given the short extension $u$ from Step 1, we want to construct an adapted short embedding $v$ with $u=v$ and $du=dv$ on $\Sigma$. The step is similar to corresponding construction in \cite{CS19}, the main differences being the choice of frequency parameter below to make our extension of class $C^{1, \theta_0}$ for any $\theta_0<\frac12$ and the global nature of the present construction.  We use one stage of adding primitive metric errors to construct an adapted short embedding.  Choose $\gamma, M >1$ such that the short extension $u:\Sigma_\epsilon^{+}\to\R^{m}$ constructed in Step 1  satisfies $u\in C^2(\Sigma_\epsilon^{+})$ with
\begin{align*}
{\gamma}^{-1}\textrm{Id}\leq&\nabla u^T\nabla u\leq\gamma \textrm{Id},\\
&\|u\|_{C^2(U_i)}\leq M
\end{align*}
in every chart $U_i$. We then define
$$
\rho^2(x,t)=\frac{1}{n}\textrm{tr}(g-\nabla u^T\nabla u).
$$
Observe that this is a well-defined function on $\Sigma_\epsilon^{+}$ since the trace is invariant under coordinate transformations. By \eqref{e:metricadapt1}, we can seek a constant $C\geq 1$ so that
for all $(x,t)\in\Sigma_\epsilon^{+}$
\begin{equation}\label{e:adapted-rho}
{C}^{-1}t^{1/2}\leq \rho(x,t)\leq C t^{1/2},\quad |\nabla\rho(x,t)|\leq Ct^{-1/2},\quad |\nabla^2\rho(x,t)|\leq Ct ^{-3/2}.
\end{equation}
Furthermore, there exists $\alpha>0$ such that
\begin{align*}
g-\nabla u^T\nabla u\geq {C}^{-1}\rho^2\textrm{Id}\geq 2\alpha\rho^2g
\end{align*}
in every chart. We assume without loss of generality that $\alpha \rho^{2}\leq \frac{1}{16}$ on $\Sigma_\epsilon^{+}$ and $\alpha<1$. In particular, using Lemma 1 from \cite{Nash54} (see also Lemma 1.9 in \cite{SzLecturenotes}), we obtain the decomposition
\begin{align*}
\frac{g-\nabla u^T\nabla u}{\rho^2}-\alpha g=\sum_{k=1}^{\tilde N} \bar{b}_{k,i}^2\varpi_{k,i}\otimes\varpi_{k,i}
\end{align*}
in $U_i$, where $\varpi_{k,i}\in \mathbb{S}^{n-1}$, $\bar{b}_{k,i}\in C^{\infty}(U_i)$ and $\tilde N\in \N$, with estimates of the form
\begin{equation}\label{e:bkbar}
\|\bar{b}_{k,i}\|_{C^j(U_i)}\leq C
\end{equation}
for $j=0,1,2$.
Setting $b_k=\bar{b}_k\rho$ we derive
$$
g-\nabla u^T\nabla u-\alpha\rho^2g= \sum_{k=1}^{\tilde N}b_{k,i}^2\varpi_{k,i}\otimes\varpi_{k,i}
$$
in $U_i$, with estimates, for $j=0,1,2$ and $k=1,\dots,\tilde N$,
\begin{equation}\label{e:bk}
|\nabla^jb_{k,i}(x,t)|\leq Ct^{1/2-j}\quad \textrm{ for }(x,t)\in U_i.
\end{equation}
Now we define a Whitney-decomposition of $\Sigma_\epsilon^{+}\setminus\Sigma$  as follows: Set $d_q=2^{-q}\epsilon$ for $q=1,2,\dots$ and define
$$
\Sigma_q^{i}=F(V_i,]d_{q+1},d_{q-1}[) = U_i\cap \left (\Sigma_{d_{q-1}}^{+}\setminus \overline{ \Sigma_{d_{q+1}}^{+}} \right )\,.
$$
We then let $\{\chi_q^{i}\}_{q,i}$ be a partition of unity on $\Sigma^{+}_\epsilon\setminus \Sigma$ subordinate to the decomposition $\Sigma_\epsilon^{+}\setminus \Sigma = \bigcup_{q=1}^\infty\bigcup_{i=1}^{N}\Sigma_q^{i}$ with the following standard properties:
\begin{itemize}
\item[(a)] $\textrm{supp}\chi_q^{i}\subset \Sigma_q^{i}$, in particular $\textrm{supp}\chi_q^{i}\cap \textrm{supp}\chi_{q+2}^{i}=\emptyset$;
\item[(b)] $\sum_{i=1}^{N}\sum_{q=0}^\infty(\chi^i_q)^{2}=1$ in $\Sigma_\epsilon^{+}\setminus \Sigma$;
\item[(c)] For any $q,i$ and $j=0,1,2$ we have $\|\chi_q^{i}\|_{C^j(\Sigma_q^{i})}\leq Cd_q^{-j}$.
\end{itemize}
Consequently we can write
\begin{align}\label{e:gdecompevenodd}
g-u^{\sharp} e-\alpha\rho^2g= & \sum_{i=1}^{N}\sum_{k=1}^{\tilde N} \sum_{q\textrm{ odd}}(\chi_q^{i}b_{k,i})^2\varpi_{k,i}\otimes\varpi_{k,i}\\
\,\,\,\,&+\sum_{i=1}^{N}\sum_{k=1}^{\tilde N} \sum_{q\textrm{ even}}(\chi_q^{i}b_{k,i})^2\varpi_{k,i}\otimes\varpi_{k,i}.
\end{align}
We now add similar perturbations to the map $u$ as in Proposition \ref{p:stage} in order to remove most of the metric error. This can be done as in Proposition 3.1 in \cite{CS19}, which we can directly apply since  from property (c) and \eqref{e:bk} we deduce
$$
\|\chi_q^{i}b_{k,i}\|_{C^j(\Sigma_q^{i})}\leq Cd_q^{1/2-j}\,.
$$
Thus the assumptions of Proposition 3.1 in \cite{CS19} hold in each $\Sigma_q^{i}$ with parameters
$$
\delta=d_q,\quad\varepsilon=d_q,\quad \theta=d_q^{-1}, \quad \tilde\theta=d_q^{-1}.
$$
  Observe that, using property (a), we may ``add'' each primitive metric  $(\chi_q^{i}b_{k,i})^2\varpi_{k,i}\otimes\varpi_{k,i}$ with $q$ odd in parallel, and serially\footnote{In fact, one could also expoit the fact that the codimension $m-n\geq 2n_*$ to perform the steps in $k$ simultaneously as well. This would lead to an improved bound in \eqref{e:v-c2-estimate}, but this is not needed for our purpose.} in $i$ and $k$. We then repeat the same process for $q$ even. Proposition 3.1 in \cite{CS19} then yields, for any $K\geq C(M,\gamma)$,  an embedding $v\in C^2(\Sigma_\epsilon^{+},\R^{m})$ such that for all $q\in\N$ and $i=1,\ldots,N$
\begin{align}
\|v-u\|_{C^0(\Sigma_q^{i})}&\leq C(M, \gamma)d_q^{3/2}{K^{-1}},\label{e:v-c0-estimate}\\
 \|v-u\|_{C^1(\Sigma_q^{i})}&\leq C(M, \gamma)d_q^{1/2},\label{e:v-c1-estimate}\\
 \|v\|_{C^2(\Sigma_q^{i})}&\leq C(M, \gamma)d_q^{-1/2}K^{2N\tilde N}.\label{e:v-c2-estimate}
 \end{align}
 Since the perturbations in each step are compactly supported away from $\Sigma$ we have $u=v$  and  $du=dv$ along $\Sigma$.
Moreover

$$
v^{\sharp}e= u^{\sharp}e+\sum_{i=1}^{N}\sum_{k=1}^{\tilde N} \sum_{q=1}^{\infty}(\chi_q^{i}b_{k,i})^2\varpi_{k,i}\otimes\varpi_{k,i}+\mathcal{E}
$$
with
\begin{align*}
\|\mathcal{E}\|_{C^0(\Sigma_q^{i})}&\leq C(M,\gamma)d_q{K^{-1}},\\
\|\mathcal{E}\|_{C^1(\Sigma_q^{i})}&\leq C(M,\gamma)K^{2N\tilde N-1}\,,
\end{align*}
for every $i=1,\ldots,N$.
Now we are in a position to show that $v$ is our desired adapted short embedding. First of all, observe that for any $\theta_0<\frac12$ and any $i=1,\ldots, N$ by \eqref{e:v-c1-estimate}-\eqref{e:v-c2-estimate}
\begin{align*}
\|v-u\|_{C^{1, \theta_0}(\Sigma_q^{i})}&\leq \|v-u\|_{C^1(\Sigma_q^{i})}^{1-\theta_0}\|v-u\|_{C^2(\Sigma_q^{i})}^{\theta_0}\leq C(M,\gamma) d_q^{(1-2\theta_0)/2}
\end{align*}
is bounded independently of $q$ and $i$. Consequently $v\in C^{1,\theta_0}(\bar \Sigma_\epsilon^{+})$. Besides, for $(x,t)\in \Sigma_q^i$ we have $t\sim d_q\sim \rho^2(x,t)$. Therefore from \eqref{e:adapted-rho} and \eqref{e:v-c2-estimate}, we get
\begin{equation}\label{e:adapted-rho-v}
\begin{split}
&|\nabla\rho(x,t)|\leq C(M, \gamma)\rho(x,t)^{-1}\leq C(M, \gamma)\rho(x,t)^{1-\frac1{\theta_0}},\\
&|\nabla^2v(x,t)|\leq C(M, \gamma)\rho(x,t)^{-1}\leq C(M, \gamma)\rho(x,t)^{1-\frac1{\theta_0}},\\
\end{split}
\end{equation}
Similarly,
\begin{align*}
|\mathcal{E}(x,t)|&\leq C(M,\gamma)K^{-1}\rho^{2}(x,t)\,,\\
|\nabla\mathcal{E}(x,t)|&\leq C(M,\gamma)K^{2N\tilde N-1}.
\end{align*}
Let
$$
h=-\frac{\mathcal{E}}{\alpha\rho^2},
$$
so that
\begin{equation*}
g-v^{\sharp }e=\alpha\rho^2g-\mathcal{E}=\alpha\rho^2(g+h),
\end{equation*}
and then
\begin{equation}\label{e:adpated-h}
\begin{split}
|h(x,t)|&\leq C(M,\gamma)(\alpha K)^{-1}<\frac{\sigma_0}{4^{n+1}},\\
|\nabla h(x,t)|&\leq C(M,K,\gamma)(\alpha^{1/2}\rho(x,t))^{-2}+ C(M,\gamma)(\alpha K)^{-1}\rho(x,t)^{-\frac{1}{\theta_0}} \\
&\leq C(M,K, \gamma)(\alpha^{1/2}\rho(x,t))^{-\frac1{\theta_0}},
\end{split}
\end{equation}
provided  $K$ is taken large enough depending on $M, \gamma, \alpha,\sigma_0.$

{\bf Step 3. Isometric extension.} By the construction of $v$, we therefore have
$$g- v^{\sharp}e=\alpha\rho^2(g+h)$$
on $\Sigma_\epsilon^{+}$, $v$ is isometric on $\Sigma$ and additionally $v=u$, $dv=du$ on $\Sigma$. Thus in particular $dv(\nu) = du(\nu) = \mu$ along $\Sigma$ and therefore \eqref{e:flexibility} holds with $v$ replacing $u$.  Besides, we have $\alpha^{1/2}\rho \leq \frac{1}{4}$ and
\begin{align*}
|\nabla(\alpha^{1/2}\rho)|\leq A(\alpha^{1/2}\rho)^{1-\frac{1}{\theta_0}},\quad &|\nabla^2v|\leq A(\alpha^{1/2}\rho)^{1-\frac{1}{\theta_0}},\\
|h|\leq \frac{\sigma_0}{4^{n+1}},\quad &|\nabla h|\leq A(\alpha^{1/2}\rho)^{-\frac1{\theta_0}}.
\end{align*}
Now fix a triangulation of $\Sigma$ by $(n-1)$-simplices, such that every simplex is contained in a single chart $V_i$. Given any $(n-1)$-simplex $\Delta^{n-1}$, we can subdivide the product $\Delta^{n-1}\times[0,\epsilon]$ in a standard way (see for example \cite{Hatcher}) into a number of $n$-simplices. We then use the map $F$ from \eqref{d:F} to obtain a triangulation $\mathcal{T}$ of $\bar \Sigma_\epsilon^{+}$.

 Now set $S= \Sigma \cup \mathcal V$, where $\mathcal V$ is the vertex set of the triangulation. Then $\Sigma$ and $S$ satisfy Condition \ref{c:geometric}, and  therefore  we can apply Proposition \ref{p:inductive} to obtain a new adapted short embedding with respect to $S$.  Iterating the construction, i.e., setting $S_0= S$, $S_k= \Sigma\cup  T_k$, where $T_k$ is the union of the $k$-faces of the triangulation, and $\Sigma_k = S_{k-1}$ for $k=1,\ldots,n$ we finally end up with a adapted  short embedding with respect to $S_n = \bar \Sigma_\epsilon^{+}$, i.e., $\bar u :\bar \Sigma_\epsilon^{+}\to\R^{m}$ is an isometric embedding. It holds $\bar u\in C^{1,\theta'}\left (\bar \Sigma_{\epsilon}^{+},\R^{m}\right )$ for
 \[ \theta' = \theta_0 b^{-2n}\,,\]
 where $b>1$ is arbitrary. Since $\theta_0< \frac{1}{2}$ is arbitrary as well, it follows that we can achieve any regularity $C^{1,\theta }$ for $\theta<\frac{1}{2}$.

Finally, $\bar u=v=f$ on $\Sigma$, so $\bar u$ extends $f$. Moreover $d\bar u=dv$ on $\Sigma$, so that also
\[ \langle d\bar  u(\nu),\bar L(X,X)\rangle = \langle dv(\nu),\bar L(X,X)\rangle >  L(X,X) \,\]
for any tangent vector $X$ to $\Sigma$, finishing the proof.
%Therefore,  for any $\theta<\frac{1}{2},$ we can seek $b>1$ such that $\theta_0=b^2\theta<\frac{1}{2},$ and then $\bar u$ is our desired isometric extension of $\Sigma$ from $u$.
%
%
%
%

%%%%%%%%%%%%%%%%%%
\section{Proof of Theorem \ref{t:global}}\label{s:global}
%%%%%%%%%%%%%%%%%

We will concentrate on the case of immersions. The extension to embeddings is straight-forward and follows well-established strategies (see \cite{Nash54,SzLecturenotes,DIS}).

With Proposition \ref{p:inductive} at our disposal, the strategy for proving Theorem \ref{t:global}  for immersions is clear: we perform an induction on dimension on the skeleta of a given regular triangulation of $\mathcal{M}$.

As in Section \ref{s:iteration} we fix a finite atlas of charts $\{\Omega_k\}$ on $\mathcal{M}$ such that on every chart ${\gamma}^{-1}\mathrm{Id}\leq G\leq \gamma\mathrm{Id}$ and $\textrm{osc}_{\Omega_k} G\leq \sigma_0(\gamma)/2$ for some $\gamma>1$, where $\sigma_0(\gamma)$ is the constant given in Proposition \ref{p:stage}. In addition, fix a triangulation $\mathcal{T}$ on $\mathcal{M}$ whose skeleta consist of a finite union of $C^1$ submanifolds, such that each triangle $T\in\mathcal{T}$ is contained in a single chart.

We first take any $C^\infty$ embedding of $\mathcal{M}$ in $\R^{n+2n_*}.$ Then we change a scale of such embedding such that the resulting immersion, which we denote by $u$, is short.  By compactness of $\mathcal{M}$  we may also make $u$ strictly short, i.e.
\[ g- u^{\sharp}e >0 \]
on $\mathcal M$ in the sense of quadratic forms. Next, we will start our inductive construction as in \cite{CS20}. In the firt step, we recall the construction of an adapted short immersion $\tilde u$ of $\mathcal{M}$ with respect to $\Sigma=\emptyset$.

\begin{proposition}\label{p:strong}
Let $u\in C^2(\mathcal{M};\R^{n+2n_*})$ be a strictly short immersion. There exists $0<\delta^*\leq 1/8$ and $A^*\geq 1$, depending on $u$ and $g$, such that for any $A\geq A^*$ there exists a strictly short immersion $\tilde{u}$ and associated $\tilde{h}$ with
\begin{equation}\label{e:strong-1}
g-\tilde{u}^\sharp e=\delta^*(g+\tilde{h}),
\end{equation}
with
\begin{equation}\label{e:strong-2}
\tfrac{1}{2}g\leq \tilde{u}^\sharp e\leq g
\end{equation}
and such that the following estimates hold:
\begin{align}
&\|\tilde{u}-u\|_0\leq \delta^*A^{-\alpha^*}, \quad \|\tilde{u}\|_2\leq A, \label{e:strong-u}\\
&\|\tilde{h}\|_0\leq A^{-\alpha^*} ,\quad \|\tilde{h}\|_1\leq A. \label{e:strong-h}
\end{align}
The exponent $\alpha^*$ only depends on $\mathcal{M}$.
\end{proposition}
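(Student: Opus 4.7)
The strategy is to turn the strictly short immersion $u$ into an adapted short immersion with respect to $\Sigma=\emptyset$ by filling in all but a fixed fraction of the metric deficit. Since $u$ is strictly short and $\mathcal M$ is compact, there exists $c_0>0$ such that $g-u^{\sharp}e\geq 3c_0 g$ pointwise. Set $\delta^*:=\min(c_0, 1/8)$, so that the $(0,2)$-tensor
\[
P := g - u^{\sharp}e - \delta^* g
\]
is uniformly positive definite, with $2c_0 g\leq P\leq g$. Using a finite atlas $\{\Omega_k\}$ and a subordinate partition of unity $\{\chi_k^2\}$, together with a preliminary mollification of $g$ and $u^{\sharp}e$ at a scale comparable to $A^{-1}$, one decomposes
\[
P = \sum_{j=1}^{N} b_j^2\, \xi_j\otimes \xi_j
\]
by applying Lemma \ref{l:deco} chart-by-chart to the localized positive-definite matrix field $\chi_k^2 P$ (after suitable renormalization so that the reference matrix lies in the regime of the lemma). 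Here $N$ is finite, each $b_j\in C^1(\mathcal M)$ is nonnegative and supported in a single chart, each $\xi_j\in \mathbb S^{n-1}$ is constant in its chart, and $\|b_j\|_0,\|b_j\|_1\leq C(u,g)$.

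The core of the proof is then to add the primitive metrics $b_j^2\xi_j\otimes\xi_j$ one by one, using a single-primitive analogue of Proposition \ref{p:stage} (in the spirit of Proposition 3.1 in \cite{CS19}): each step is a Nash twist along $\xi_j$ at frequency $\lambda\sim A$ with amplitude $\sim b_j/\lambda$, employing two of the orthonormal normals produced by Lemma \ref{l:normal}. Choosing parameters $\delta\sim\delta^*$ and a fixed $\tau$ slightly larger than $1$ in each stage, after all $N$ applications one obtains $\tilde u\in C^2(\mathcal M,\R^{n+2n_*})$ with
\[
\tilde u^{\sharp}e = u^{\sharp}e + P + \mathcal E = (1-\delta^*) g + \mathcal E,
\]
where $\mathcal E$ collects the accumulated perturbation and mollification errors. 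From the estimates \eqref{e:stage-2}--\eqref{e:stage-5} one then derives $\|\tilde u\|_2\leq CA$, $\|\tilde u-u\|_0\leq CA^{-1}$, $\|\mathcal E\|_0\leq CA^{-\alpha^*}$ and $\|\mathcal E\|_1\leq CA$ for some $\alpha^*\in (0,1)$ determined by $\tau$ (and hence depending only on $\mathcal M$). Setting $\tilde h:=-\mathcal E/\delta^*$ yields $g-\tilde u^{\sharp}e=\delta^*(g+\tilde h)$, so that \eqref{e:strong-1}, \eqref{e:strong-u}, and \eqref{e:strong-h} hold; the two-sided bound \eqref{e:strong-2} then follows from $\tilde u^{\sharp}e=(1-\delta^*)g+\mathcal E$ together with $\delta^*\leq 1/8$ and $\|\mathcal E\|_0$ arbitrarily small for $A\geq A^*$ large.

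The main obstacle will be arranging the order of the primitive additions so that the successive applications of the stage proposition close without degrading the inductive hypotheses (uniform positivity of $u_q^{\sharp}e$ and smallness of the residual $H$). This is a standard difficulty in Nash-type iterations; it is resolved by grouping primitives whose supports overlap into a bounded number of ``colours'' that may be added in parallel, and by taking $A$ sufficiently large to absorb the geometric constants from each stage. A secondary technical issue is the $C^1$ regularity of $g$, which forces the preliminary mollification step; the resulting mollification defect is absorbed into $\mathcal E$ by Lemma \ref{l:mollification}, and the interplay between the mollification scale and $\tau$ is what ultimately pins down the exponent $\alpha^*$.
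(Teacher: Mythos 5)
Your proposal is correct and follows essentially the intended argument: the paper omits the proof of Proposition \ref{p:strong} (it is recalled from \cite{CS20}), but the construction there --- and the analogous Step 2 of Section \ref{s:flexibility} --- is exactly one Nash--Kuiper stage applied to $g-u^{\sharp}e-\delta^{*}g$ after a partition-of-unity decomposition into primitive metrics, with the residual error absorbed into $\tilde h=-\mathcal E/\delta^{*}$. The bookkeeping you flag (serial addition of overlapping primitives with geometrically growing frequencies, renormalizing the base frequency so that $\|\tilde u\|_2\leq A$ rather than $CA$, and absorbing constants by taking $A\geq A^{*}$ large) is standard and closes as you describe.
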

Next, fix $\theta_0<1/2$ and $\epsilon>0$. Set $u_0=\tilde u$, $h_0=\tilde h$ as obtained from Proposition \ref{p:strong} with $A=A_0$ sufficiently large (to be determined below), and also $\tilde\rho^2=\delta^*$. From  \eqref{e:strong-u} we deduce
\begin{equation}\label{e:u-tilde-u}
\|u-u_0\|_0\leq\frac{\eps}{4}
\end{equation}
by assuming $A_0$ is sufficiently large. From \eqref{e:strong-u}-\eqref{e:strong-h}, we further have
\begin{align*}
\|\nabla^2u_0\|_0&\leq A_0\leq A_0(\delta^*)^{\frac{1}{2}-\frac{1}{2\theta_0}}\,,\\
\|h_0\|_0&\leq A_0^{-\alpha^*}\leq \frac{\sigma_0}{4^{n+1}},\\
\|\nabla h_0\|_0&\leq A_0^{1-\alpha^*}\leq A_0(\delta^*)^{\frac{\alpha_0}{2}-\frac{1}{2\theta_0}},
\end{align*}
where $\sigma_0$ is in Proposition \ref{p:stage}. Therefore we deduce that $u_0$ is an adapted short immersion with respect to the empty set $\Sigma_0=\emptyset$ with exponent $\theta_0$, and furthermore the estimates \eqref{e:inductive-u-rho-assumption} are satisfied by $(u_0, \rho_0, h_0)$ with $(A,\theta)$ replaced by $(A_0,\theta_0)$.

For any $b>1$, we can apply Proposition \ref{p:inductive} to obtain a $C^{1, \theta_1}$ adapted short immersion $(u_1, \rho_1, h_1)$ with respect to $\Sigma_1=\mathcal{V}$, where $\mathcal{V}$ is the vertex set of the triangulation $\mathcal{T}$ and such that \eqref{e:inductive-u-rho-assumption}\eqref{e:inductive-conclusion} hold with
$$
A_1=A_0^{b^2}, \quad \theta_1=\frac{\theta_0}{b^2}.
$$
We then continue this process along the skeleta $\Sigma_1\subset\Sigma_2\subset\dots\Sigma_{n+1}=\mathcal{M}$ and obtain
adapted short immersions $(u_j, \rho_j, h_j)$ with respect to $\Sigma_j$, $j=1,2,\dots,n+1$
with
$$
A_{j+1}=A_j^{b^2}, \quad \theta_{j+1}=\frac{\theta_j}{b^2}.
$$
After $n+1$ steps we finally obtain a global $C^{\theta_{n+1}}$ isometric immersion $v:=u_{n+1}$ of $\mathcal{M}$, with
$$
\theta_{n+1}=b^{-2n-2}\theta_0.
$$
Note that for any fixed $\theta_0$, taking $b\to1,$ we will have $\theta_{n+1}\to \theta_0$. Thus, for any $\theta'<\theta_0$ there exists a choice of $b>1$ so that $\theta'<\theta_{n+1}<\theta_0$. In this way we can achieve any exponent $\theta<\frac12$.
Finally, observe that (recalling \eqref{e:u-tilde-u})
\begin{align*}
\|u-v\|_0&\leq\|u-u_0\|_0+\sum_{j=0}^n\|u_{j+1}-u_j\|_0\\
&\leq \eps/4+\sum_{j=0}^nA_j^{-1/2}\leq \eps/4+(n+1)A_0^{-1/2}\\
&\leq \eps
\end{align*}
by choosing $A_0$ sufficiently large. This completes the proof of Theorem \ref{t:global}.

%%%%%%%%%%
%
\bigskip

\begin{appendix}
\section{Proof of Lemma \ref{l:normal}}

\textbf{Step 1.} Without loss of generality we assume $\Omega = B_1(0)$. In a first step we construct a family of vectorfields $\zeta_1,\ldots, \zeta_{m-n}$ which satisfies \eqref{e:normalestimates} on a small neighbourhood of the origin. To do this, pick orthonormal vectors $\xi_1,\ldots, \xi_{m-n} \in \R^{m}\setminus dv_0\left (T_0\bar B_1\right )$. We then set
 \[ \nu_i = \xi_i -\sum_{j=1}^{n} r_{ij}\partial_jv\,,\]
where $r_{ij}$ are chosen to guarantee $\langle \nu_i,\partial_kv\rangle =0$ for every $i$ and $k$. This is possible since $\nabla v ^{T} \nabla v \geq \gamma^{-1} Id$. Indeed, denote $b_{ik} = \langle \xi_i,\partial_k v\rangle $ and observe that $\langle \nu_i,\partial_kv\rangle = 0$ for all $i,k$ is equivalent to
\[ R\cdot \nabla v^{\intercal} \nabla v = B\,,\]
where $R$ and $B$ are the $(m-n)\times n$ matrices with entries $R_{ij}=r_{ij}$ and $B_{ij} = b_{ij}$. We can then simply set
\[ R = B\cdot \left (\nabla v^{\intercal }\nabla v\right )^{-1 }\,.\]
We claim that in a neighbourhood of the origin the family $\{ \nu_i\}_{i=1}^{m-n}$ is linearly independent and therefore constitutes a frame for the normal bundle. A Gram-Schmidt process will then produce the desired vectorfields.

To show the claim, we write
\[ (\nabla v^{\intercal } \nabla v)^{-1}_{ij} = (\det \nabla v^{\intercal }\nabla v)^{-1} P_{ij}(\nabla v)\,,\]
where $P_{ij}(\nabla v)$ is a polynomial in the arguments $\partial_k v^{l}$. Observe that assumption \eqref{e:vispositivedefinite} implies $[v]_1 \leq C(\gamma)$. Hence, with Lemma \ref{l:composition} and assumption \eqref{e:vispositivedefinite} we find
%\[ [ P_{ij}(\nabla v) ]_{l,\beta}\leq C_{l,\beta}(\gamma)[v]_{l+1,\beta}\,.\]
%Moreover, again thanks to \eqref{e:vispositivedefinite},
%\[ [(\det\nabla v^{\intercal }\nabla v)^{-1}]_{l,\beta}\leq C_{l,\beta}(\gamma) [v]_{l+1,\beta}\,,\]
%so that
%\begin{equation} \label{e:p4}
% [(\nabla v^{\intercal } \nabla v)^{-1}_{ij}]_{l,\beta}\leq C_{l,\beta} [v]_{l+1,\beta}\,.
%\end{equation}
%In particular
%\[ \|(\nabla v^{\intercal } \nabla v)^{-1}_{ij}\|_0\leq C(\gamma)\,.\]

\[ \|r_{ij}\|_0 \leq C(\gamma)[v]_{2} \varepsilon\,\]
where we used that
\[  |b_{ik}|=| \langle \xi_i,\partial_k v(x)\rangle | = | \langle \xi_i,\partial_k v(x)-\partial_kv(0)\rangle |\leq [v]_{2} \varepsilon\,\]
for $x\in \bar B_\varepsilon$.
With this estimate we find
\[ \| \langle \nu_i,\nu_j\rangle -\delta_{ij}\|_{C^{0}(\bar B_\varepsilon)} = \| \langle \nu_i,\nu_j\rangle -\langle \xi_i,\xi_j\rangle \|_{C^{0}(\bar B_\varepsilon)}\leq C(n,\gamma)[v]_{2}\varepsilon\,.\]
Therefore, if $\varepsilon\equiv \varepsilon\left (n,\gamma,[v]_{2}\right )>0$ is small enough, the vectorfields $\nu_1,\ldots, \nu_{m-n}$ are linearly independent. Before continuing with the Gram-Schmidt process, observe the following estimates for $0<l\leq N$
\[ [r_{ij}]_{l} \leq C_{l}\left (C(\gamma) [v]_{l+1} + C(\gamma) [b_{ij}]_{l} \right )\leq C_{l}(\gamma)[v]_{l+1}\,,\]
thanks to the Leibniz rule. Therefore we have the same estimates for the vectorfieds
\begin{equation}\label{e:nuestimates}
 [\nu_i]_{l} \leq C_{l}(\gamma)[v]_{l+1}\,.
\end{equation}
Now we set
\[ \zeta_1 = \frac{\nu_1}{|\nu_1|}\,,\]
and observe that for small enough $\varepsilon>0$ we have $|\nu_1|\geq \frac{1}{2}$ so that, thanks to Lemma \ref{l:composition} and \eqref{e:nuestimates}, $ \zeta_1\in C^{N}\left (\bar B_\varepsilon\right )$ with
\[ [\zeta_1]_{C^{l}(\bar B_\varepsilon)}\leq C_{l}[\nu_1]_{C^{l}(\bar B_\varepsilon)} \leq C_{l}(\gamma)[v]_{C^{l}(\bar B_\varepsilon)}\,\]
for all $0\leq l \leq N$. Moreover, on $\bar B_\varepsilon $ we have
\[ | \zeta_1- \xi_1| \leq \frac{2|\nu_1-\xi_1|}{|\nu_1|}\leq C(\gamma)[v]_{2} \varepsilon\,.\]
Now we assume $\zeta_1,\ldots, \zeta_{k-1}$ are already constructed with
\begin{align*}
 \langle \zeta_i,\zeta_j\rangle &= \delta_{ij} \\
  \nabla v\cdot \zeta_i &= 0 \\
  [\zeta_i]_{l,\beta} &\leq C_{l}(\gamma)[v]_{l+1}  \quad \text{ for all } 0\leq l \leq N\,,
\end{align*}
on $\bar B_\varepsilon$, and in addition
\begin{equation}\label{e:p6}
|\zeta_i-\xi_i|\leq C(\gamma)[v]_{2} \varepsilon\,.
\end{equation}
We then set
\[ \theta_k = \nu_k - \sum_{j=1}^{k-1}\langle \nu_k,\zeta_j\rangle \zeta_j\,, \quad \zeta_k = \frac{\theta_k}{|\theta_k|}\,.\]
It remains to show that $\zeta_k$ satisfies \eqref{e:normalestimates} and \eqref{e:p6}. Observe that
 \[ \langle \nu_k, \zeta_j\rangle  = \langle \nu_k-\xi_k,\zeta_j\rangle+ \langle \xi_k,\zeta_j-\xi_j\rangle \,\]
 so that $|\langle \nu_k,\zeta_j\rangle | \leq C(\gamma)[v]_{2}\varepsilon$ on $\bar B_\varepsilon$  and by the Leibnitz rule also   \[
 [ \langle \nu_k,\zeta_j\rangle]_{C^{l}(\bar B_\varepsilon)} \leq C_l(\gamma)[v]_{C^{l+1}(\bar B_\varepsilon)} \,.\]
 In particular $|\theta_k|\geq \frac{1}{4}$ for $\varepsilon$ small enough and hence, with Lemma \ref{l:composition},
 \[ [\zeta_k]_{C^{l}(\bar B_\varepsilon)}\leq C_l(\gamma)[v]_{C^{l+1}(\bar B_\varepsilon)}\,.\]
 Therefore $\zeta_k$ satisfies \eqref{e:normalestimates}. Since moreover
\[ |\zeta_k- \xi_k |\leq \frac{2|\theta_k-\xi_k|}{|\theta_k|} \leq C( |\theta_k-\nu_k|+|\nu_k-\xi_k|) \leq C(\gamma)[v]_{2}\varepsilon\,,\]
the first step is completed.
\medskip

\noindent \textbf{Step 2.} In this step we show that one can continue the vectorfields to maps on $\bar B_1$ satisfying the same constraints. Consider the set
\[ R = \{ \rho \in [0,1]: \text{there exist } \zeta_1,\ldots, \zeta_{m-n} \in C^{N,\alpha}(\bar B_\rho) \text{ satisfying \eqref{e:normalestimates} on } \bar B_\rho\}\,.\]
As we saw in Step 1, $R$ is nonempty. Set $\bar \rho = \sup R$. We claim that $\bar \rho \in R$. To see this, let $\rho_q\uparrow\bar \rho$ and fix the corresponding families of vectorfields $\zeta_i^{q}$. Now assume that there exists $\delta =\delta(\gamma,v)> 0$ such that each $\zeta_i^{q}$ can be extended to a map $\tilde \zeta_i^{q}\in C^{N}(\bar B_{\sigma_q})$ with
\begin{equation}\label{e:p7}
[\,\tilde \zeta_i^{q}\,]_{C^{l}(\bar B_{\sigma_q})} \leq C(\gamma)(1+[v]_{l+1})\,,
\end{equation}
where $\sigma_q = \min\{1,\rho_q+\delta\}$. We will prove this fact at the end of this proof in Step 3. With it, we can repeat the procedure of Step 1: We set
\[ \nu_i^{q} = \tilde \zeta_i^{q}- \sum_{j=1}^{n}r_{ij}^{q}\partial_jv\,,\]
 where, again, $r_{ij}^{q}$ are chosen such that every $\nu_i^{q}$ is orthogonal to $v$. We need to show that, for $\delta$ small enough, $\nu_i^{q}$ are linearly independent to perform the Gram-Schmidt process. Set $b_{ik}^{q} = \langle \tilde \zeta_i^{q},\partial_k v\rangle$ and observe that, for $ \rho_q < |x| \leq \sigma_q $,
 \[ b_{ik}^{q} (x) = \Big\langle \tilde \zeta_i^{q}(x)-\zeta_i^{q}\left (\rho_q\frac{x}{|x|}\right ),\partial_k v(x)\Big\rangle + \Big\langle \zeta_i^{q}\left (\rho_q\frac{x}{|x|}\right ), \partial_kv(x)-\partial_kv\left (\rho_k\frac{x}{|x|}\right )\Big\rangle\,. \]
 Thus,
 \[|b_{ik}^{q}| \leq C(\gamma)[\,\tilde \zeta_i^{q}\,]_{C^{1}(\bar B_{\sigma_q})}\delta + [v]_{2}\delta\leq C(\gamma,[v]_{2})\delta\,\]
 thanks to \eqref{e:p7}. Thus, as before it follows
 \[ |r_{ij}^{q}|\leq C(\gamma,[v]_{2}) \delta\,.\]
 Now we write
 \[ \langle \nu_i^{q},\nu_j^{q} \rangle = \langle \tilde \zeta_i^{q},\tilde \zeta_j^{q}\rangle + E\,,\]
 where $E$ is an error term with  $|E|\leq C(\gamma,[v]_{2}) \delta$ thanks to the estimate on $r_{ij}^{q}$. We expand
 \begin{align*} \langle \tilde \zeta_i^{q},\tilde \zeta_j^{q}\rangle &= \Big\langle \tilde \zeta_i^{q}-\zeta_i^{q}\left (\rho_q \frac{x}{|x|}\right ),\tilde \zeta_j^{q}\Big\rangle+\Big\langle \tilde \zeta_i^{q}\left (\rho_q \frac{x}{|x|}\right ),\tilde \zeta_j^{q}-\zeta_j^{q}\left (\rho_q \frac{x}{|x|}\right )\Big\rangle \\
 & \quad + \Big\langle \zeta_i^{q}\left (\rho_q\frac{x}{ |x|} \right ),\zeta_j^{q}\left (\rho_q\frac{x}{ |x|} \right )\Big\rangle \\
 & = \delta_{ij} + \tilde E\,,
 \end{align*}
 where again $|\tilde E| \leq C(\gamma,[v]_{2}) \delta$.  Hence, for $\delta(\gamma,v)$ small enough, $\nu_i^{q}$ are linearly independent. The estimates \eqref{e:nuestimates} can be derived in the same way. As in Step 1, we can then apply the Gram-Schmidt process to generate the vectorfields $\bar \zeta_i^{q}$ satisfying \eqref{e:normalestimates} on $\bar B_{\sigma_q}$. Consequently, $\sigma_q \in R$. By definition, $\bar \rho \geq \sigma_q$ for all $q$. Letting $q\to \infty$ we find $\rho\geq \min\{ 1,\rho+\delta\}$, which shows $\sigma_q =1$ for $q$ large enough. Hence $1\in R$, which completes Step 2.
\medskip

\noindent \textbf{Step 3.} In this step we show that there exists a $\delta\equiv \delta(\gamma,v) > 0$ such that any map $\zeta \in C^{N}(\bar B_\rho )$ with
\begin{equation}\label{e:extensionassumption}
[ \zeta]_{C^{l}(\bar B_\rho)} \leq C_l(\gamma) \left (1+ [v]_{C^{l+1}(\bar B_1)}\right ) \,
\end{equation}
can be extended to a map $\tilde \zeta \in C^{N}(\R^{n})$ such that
\begin{equation}\label{e:p8}[\tilde \zeta ]_{C^{l}(\bar B_\sigma) } \leq C_l(\gamma)\left (1+[v]_{C^{l+1}(\bar B_1)}\right )\,,\end{equation}
where $\sigma = \min\{1,\rho+\delta\}$ and $C_l(\gamma)$ might differ from the constant in \eqref{e:extensionassumption}. 

The existence of such an extension is a classical fact, originally due to Whitney \cite{Whitney}. However, we could not find a reference stating the estimates \eqref{e:p8}, which is why we redo the argument in the following.

For $k\in \N$, $y\in \bar B_\rho$ and $x\in \R^{n}$ we denote by $T_y^{k}\zeta(x) $ the k-th order Taylor polynomial of $\zeta$ around $y$ at $x$, i.e.
\[ T_y^{k}\zeta(x) = \sum_{|\beta|\leq k} \frac{\partial^{\beta}\zeta(y)}{\beta!}(x-y)^{\beta}\,,\]
with  the usual conventions concerning the multi-indices $\beta$. Let $\chi_j$ be a partition of unity subordinate to decomposition of $\R^{n}\setminus \bar B_\rho$ such that no point is in the support of more than $M(n)$ functions $\chi_j$, the diameter of the support of $\chi_j$ is at most twice its distance to $\bar B_\rho$, and
\[ |\partial^{\beta}\chi_j(x)| \leq C_\beta d(x)^{-|\beta|}\]
for $x\in \R^{n} \setminus \bar B_\rho$, where $d(x)= \mathrm{dist}(x,\bar B_\rho)$. For a proof we refer to Lemma 2.3.7. in \cite{Hoermander}.

 We then set $\tilde \zeta (x) = \zeta(x)$ for $x\in \bar B_\rho$ and
 \[ \tilde \zeta(x) = \sum_j \chi_j(x) T_{y_j}^{N}\zeta(x)\,\]
 otherwise, where $y_j \in \partial \bar B_\rho$ minimizes the distance to the support of $\chi_j$. In Theorem 2.3.6. in \cite{Hoermander} it is shown that $\tilde \zeta \in C^{N}$ with $\partial^{\beta} \tilde \zeta = \partial^{\beta}\zeta $ on $\bar B_\rho$ for every $|\beta| \leq k$. We want to show that $\tilde \zeta$ also satisfies \eqref{e:p8}.

  Observe first that if $x\in \supp \chi_j$ then
 \[ |x-y_j| \leq \mathrm{diam}\left (\supp\chi_j\right ) + \mathrm{dist}(\supp\chi_j,\bar B_\rho) \leq 3d(x)\,.\]
Hence, for such $x$ we have
\begin{align*} |\partial^{\beta}T^{N}_{y_j}\zeta(x)| &= \Big|\sum_{|\mu|\leq N-|\beta|} \frac{\partial^{\beta+\mu}\zeta(y_j)}{\mu!}(x-y_j)^{\mu}\Big| \leq [\zeta]_{|\beta|} + \sum_{i=1}^{N-|\beta|}d(x)^{i}[\zeta]_{|\beta|+i} \nonumber \\
&\leq [\zeta]_{|\beta|} + C_{N}(\gamma	)d(x)\left (1+\|v\|_{N+1}\right )\,,
\end{align*}
for any multi-index $\beta$ with $0\leq|\beta|\leq N$.
Consequently, if $ \rho < |x|\leq \rho+\delta$ we find
\begin{equation}\label{e:p9}
 |\partial^{\beta}T_{y_j}^{N}\zeta(x)| \leq [\zeta]_{|\beta|} +1\,,
\end{equation}
if $\delta$ is chosen small enough depending on $\gamma$ and $v$. In particular, this shows the estimate \eqref{e:p8} for $l=0$ in view of \eqref{e:extensionassumption}. Now fix  $1\leq l\leq N$ and multi-indices $\alpha,\beta$ with $|\alpha|+|\beta| = l$. We want to show the estimate
\begin{equation}\label{e:p10} \Big|\sum_j \partial^{\alpha}\chi_j\partial^{\beta}
T^{N}_{y_j}\zeta \Big| \leq C_{l}(\gamma)\left (1+[v]_{l+1}\right )\,.\end{equation}
If $\alpha=0$ this follows from the estimate \eqref{e:p9} together with the assumption \eqref{e:extensionassumption}. Therefore we can assume $|\alpha|\geq 1$. We write
\begin{align*}
\sum_j \partial^{\alpha}\chi_j(x)\partial^{\beta}
T^{N}_{y_j}\zeta(x) &= \sum_j \partial^{\alpha}\chi_j(x)\sum_{|\mu|\leq N-|\beta| } \frac{\partial^{\beta+\mu}\zeta(y_j)}{\mu!}(x-y_j)^{\mu} \\ &=    \sum_j \partial^{\alpha}\chi_j(x)\sum_{|\mu|<|\alpha| } \frac{\partial^{\beta+\mu}\zeta(y_j)}{\mu!}(x-y_j)^{\mu}\\
&\quad + \sum_j \partial^{\alpha}\chi_j(x)\sum_{|\alpha|\leq |\mu|\leq N-|\beta| } \frac{\partial^{\beta+\mu}\zeta(y_j)}{\mu!}(x-y_j)^{\mu}\\
& =: \mathrm{I}(x)+\mathrm{II}(x)
\end{align*}
Recall that $|\partial^{\alpha}\chi_j(x)|\leq Cd(x)^{-|\alpha|}$. Since $|\alpha|=l-|\beta|$ we can estimate the second sum by
\begin{align*}|\mathrm{II}(x)|&\leq Cd(x)^{-|\alpha|}\left ( [\zeta]_l d(x)^{|\alpha|} + \sum_{i=1}^{N-|\beta|} d(x)^{|\alpha|+i}[\zeta]_{l+i}\right ) \\
&\leq C[\zeta]_l +C_{N}(\gamma)d(x)\left (1+\|v\|_{N+1}\right )\leq C_l(\gamma)(1+[v]_{l+1})\,
\end{align*}
if $\rho< |x|\leq \rho+\delta$ and $\delta$ small enough, i.e. $d(x)\leq\delta,$ thanks to \eqref{e:extensionassumption}. To estimate $\mathrm{I}(x)$ we set $x^{*}= \rho \frac{x}{|x|}$ and observe that, by Taylor's theorem,
\[ \partial^{\beta+\mu}\zeta(y_j)-T^{|\alpha|-|\mu|-1 }_{x*}\partial^{\beta+\mu}\zeta(y_j) = \sum_{|\tilde\mu|=|\alpha|-|\mu|} \frac{\partial^{\tilde\mu+\beta+\mu}\zeta(\xi)}{\tilde\mu!}(y_j-x^{*})^{\tilde\mu} \,,\]
for some $\xi\in [x^{*},y_j]$. Now $|x^{*}-y_j|\leq d(x)+|x-y_j|\leq 4d(x)$, so that
\[\Big|\partial^{\beta+\mu}\zeta(y_j)-T^{|\alpha|-|\mu|-1 }_{x*}\partial^{\beta+\mu}\zeta(y_j)\Big|\leq C[\zeta]_{|\alpha|+|\beta|} d(x)^{|\alpha|-|\mu|}= C[\zeta]_{l} d(x)^{|\alpha|-|\mu|}\,.\]
 since by assumption $|\alpha|+|\beta|=l$. Therefore,  it holds
\begin{equation}\label{e:finales} \Big|\sum_j \partial^{\alpha}\chi_j(x)\sum_{|\mu|<|\alpha|}\frac{(x-y_j)^{\mu}}{\mu!}\left (\partial^{\beta+\mu} \zeta(y_j) -T^{|\alpha|-|\mu|-1 }_{x^{*}}\partial^{\beta+\mu}\zeta(y_j)\right )\Big|\leq C[\zeta]_l\,.
\end{equation}
To conclude it suffices to observe
\begin{align}
&\sum_{|\mu|\leq |\alpha|-1}\frac{(x-y_j)^{\mu}}{\mu!}T_{x^{*}}^{|\alpha|-|\mu|-1}\partial^{\beta+\mu}\zeta(y_j) \nonumber\\
&\qquad \quad \quad = \sum_{|\mu|\leq |\alpha|-1}\left (\sum_{|\tilde\mu|\leq |\alpha|-|\mu|-1}\frac{\partial^{\tilde\mu+\beta+\mu}\zeta(x^{*})}{\tilde\mu!\mu!}(x-y_j)^{\mu}(y_j-x^{*})^{\tilde\mu}\right )\nonumber\\
& \qquad \quad \quad = \sum_{|\mu|\leq |\alpha|-1} \frac{\partial^{\beta+\mu}\zeta(x^{*})}{\mu!}\left (\sum_{\tilde\mu \leq \mu } \frac{\mu!}{\tilde\mu!(\mu-\tilde\mu)!}(x-y_j)^{\mu}(y_j-x^{*})^{\mu-\tilde\mu}\right )\nonumber\\
&\qquad \quad \quad   = \sum_{|\mu|\leq |\alpha|-1} \frac{\partial^{\beta+ \mu}\zeta(x^{*})}{\mu!}(x-x^{*})^{\mu} = T^{|\alpha|-1}_{x^{*}}\partial^{\beta}\zeta(x)\,. \label{e:p11}
\end{align}
Since $\sum_j \partial^{\alpha}\chi_j(x) = 0$ we can simply subtract $T^{|\alpha|-1}_{x^{*}}\partial^{\beta}\zeta(x)$ to find
\[ |\mathrm{I}(x)| = \Big|\sum_j\partial^{\alpha}\chi_j(x)\left (\sum_{|\mu|\leq |\alpha|-1} \frac{\partial^{\beta+\mu}\zeta(y_j)}{\mu!}(x-y_j)^{\mu} - T^{|\alpha|-1}_{x^{*}}\partial^{\beta}\zeta(x)\right )\Big|\leq C[\zeta]_l\]
in view of \eqref{e:p11} and \eqref{e:finales}, which, thanks to \eqref{e:extensionassumption}, finishes the proof.
\end{appendix}

%\begin{acknowledgements}
%If you'd like to thank anyone, place your comments here
%and remove the percent signs.
%\end{acknowledgements}

% Authors must disclose all relationships or interests that
% could have direct or potential influence or impart bias on
% the work:
%

\end{document}